\newcommand{\st}{~|~}
\newcommand{\mst}{~\middle|~}
\renewcommand{\phi}{\varphi}
\renewcommand{\epsilon}{\varepsilon}
\renewcommand{\emptyset}{\varnothing}
\renewcommand{\setminus}{\smallsetminus}
\newcommand{\e}{\epsilon}
\newcommand{\dee}{\partial}
\newcommand{\R}{\mathbb{R}}
\newcommand{\C}{\mathbb{C}}
\newcommand{\Z}{\mathbb{Z}}
\newcommand{\N}{\mathbb{N}}
\newcommand{\Q}{\mathbb{Q}}
\renewcommand{\Re}{\mathrm{Re}}
\renewcommand{\Im}{\mathrm{Im}}
\DeclareMathOperator{\GL}{\mathrm{GL}}
\newcommand{\1}{^{-1}}
\DeclareMathOperator{\modulus}{\mathrm{mod}}
\DeclareMathOperator{\hei}{\mathrm{height}}
\DeclareMathOperator{\width}{\mathrm{width}}
\DeclareMathOperator{\slope}{\mathrm{slope}}
\DeclareMathOperator{\supp}{\mathrm{supp}}
\renewcommand{\Tilde}{\widetilde}
\DeclareMathOperator{\hocolim}{\mathrm{hocolim}}
\DeclareMathOperator{\colim}{\mathrm{colim}}
\DeclareMathOperator{\sign}{\mathrm{sign}}
\newtheorem{thm}{Theorem}
\numberwithin{thm}{section}
\newtheorem{prop}[thm]{Proposition}
\newtheorem{lem}[thm]{Lemma}
\theoremstyle{definition}
\newtheorem{defn}[thm]{Definition}
\newtheorem{rem}[thm]{Remark}
\title[$L^\infty$-isodelaunay decomposition of strata]{The $L^\infty$-isodelaunay decomposition of strata of abelian differentials}
\author{Bradley Zykoski}
\address{Department of Mathematics, University of Michigan, Ann Arbor, MI}
\email{\href{mailto:zykoskib@umich.edu}{zykoskib@umich.edu}}
\begin{document}
\maketitle
\thispagestyle{empty}

\begin{abstract}
We study the decomposition of a stratum $\mathcal H(\kappa)$ of abelian differentials into regions of differentials that share a common $L^\infty$-Delaunay triangulation. In particular, we classify the infinitely many adjacencies between these isodelaunay regions, a phenomenon whose observation is attributed to Filip in work of Frankel. This classification allows us to construct a finite simplicial complex with the same homotopy type as $\mathcal H(\kappa)$, and we outline a method for its computation. We also require a stronger equivariant version of the traditional Nerve Lemma than currently exists in the literature, which we prove.
\end{abstract}

\setcounter{tocdepth}{1}
\tableofcontents
\section{Introduction}
Let $S_g$ be a surface of genus $g$, and let $\Sigma \subset S_g$ be a finite subset of size $n$. The topology of the moduli space $\mathcal M_{g,n}$ of Riemann surface structures on $S_g$ with marked points in $\Sigma$ is a rich subject with much remaining to discover. One of the landmark results in this direction was Harer's computation in \cite{harer} that the virtual cohomological dimension of $\mathcal M_{g,0}$ is $4g-5$. This involved the construction of a simplicial complex $Y^0$ whose cells were indexed by topological substructures of $(S_g, \Sigma)$ called arc systems, such that $\mathcal M_{g,n}$ is homotopy equivalent to the quotient of $Y^0$ by a mapping class group. More than three decades later, it has recently been shown the cohomology only one degree below the virtual cohomological dimension is very rich: Theorem 1.1 of \cite{chan-galatius-payne} states that $\dim H^{4g-6}(\mathcal M_{g,0};\Q)$ grows at least exponentially in $g$.

Given a partition $\kappa = (k_1,\dots,k_n)$ of $2g-2$, the stratum $\mathcal H(\kappa)$ of abelian differentials is the moduli space of pairs $(X, \omega)$ where $X$ is a Riemann surface structure on $S_g$ and $\omega$ is an abelian differential on $X$ with zeroes of orders $\kappa$ at the points $\Sigma$. Systematic study of the topology of $\mathcal H(\kappa)$ began much more recently than that of $\mathcal M_{g,n}$. The first major results in this direction were Theorems 1 and 2 of \cite{kontsevich-zorich}, which classified the connected components of $\mathcal H(\kappa)$. Later major results include Theorem A of \cite{calderon-salter}, which characterizes the image of a canonical monodromy representation of the orbifold fundamental group of a component of $\mathcal H(\kappa)$, and Theorem 1.3 of \cite{costantini-moeller-zachhuber}, which provides a computable recursion for the orbifold Euler characteristic of $\mathcal H(\kappa)$. It is shown in \cite{looijenga-mondello} that most strata in genus 3, and all hyperelliptic components of strata, are orbifold $K(G,1)$ spaces, but it is unknown in general whether components of $\mathcal H(\kappa)$ are orbifold $K(G,1)$ spaces. Further work on the orbifold fundamental groups of strata includes \cite{hamenstaedt}, \cite{calderon}, and \cite{calderon-salter-spin}, and further related results of an algebro-geometric nature include \cite{mondello}, \cite{bainbridge-et-al}, and \cite{chen}. Study of the topology of closely-related moduli spaces has also been developing in works such as \cite{lanneau}, \cite{boissy-ends}, \cite{boissy}, \cite{bainbridge-et-al-k-diff}, \cite{chen-gendron}, and \cite{apisa-bainbridge-wang}. No systematic calculations of the dimensions of nontrivial cohomology groups of $\mathcal H(\kappa)$ are currently known, nor are any presentations for the fundamental groups of the components of $\mathcal H(\kappa)$.

We introduce a simplicial complex $\mathcal N(\mathscr D_{\mathrm{finite}}(\kappa))$ whose cells are indexed by topological substructures of $(S_g, \Sigma)$ called veering triangulations, such that we have the following theorem.
\begin{thm}\label{intro main}
The stratum $\mathcal H(\kappa)$ is homotopy equivalent to the quotient of $\mathcal N(\mathscr D_{\mathrm{finite}}(\kappa))$ by the mapping class group $\mathrm{MCG}(S_g,\Sigma)$. After barycentric subdivision, this quotient is a finite simplicial complex $\mathcal I(\kappa)$.
\end{thm}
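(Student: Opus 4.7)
The plan is to realize $\mathcal H(\kappa)$ as the union of the isodelaunay regions in $\mathscr D_{\mathrm{finite}}(\kappa)$ and apply an equivariant Nerve Lemma. First I would verify that, pulled back to the Teichmüller-like cover $\Tilde{\mathcal H}(\kappa)$, the collection $\mathscr D_{\mathrm{finite}}(\kappa)$ forms a good cover in the strong equivariant sense demanded by the nerve statement: each nonempty finite intersection of cover elements must be contractible, and the action of $\mathrm{MCG}(S_g,\Sigma)$ on the index set must be compatible with face relations. Contractibility (in fact convexity) of intersections should follow from the fact that the locus of differentials admitting a prescribed collection of saddle connections as edges of an $L^\infty$-Delaunay triangulation is cut out by linear inequalities in period coordinates. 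The combinatorial bookkeeping is supplied by the classification of adjacencies between isodelaunay regions obtained earlier in the paper, which identifies each region with a veering triangulation and controls how these regions meet.

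Next I would invoke the equivariant Nerve Lemma proved elsewhere in the paper to conclude that $\Tilde{\mathcal H}(\kappa)$ and $\mathcal N(\mathscr D_{\mathrm{finite}}(\kappa))$ are $\mathrm{MCG}(S_g,\Sigma)$-equivariantly homotopy equivalent. Because $\mathrm{MCG}(S_g,\Sigma)$ acts on $\Tilde{\mathcal H}(\kappa)$ with orbifold quotient $\mathcal H(\kappa)$ and the equivalence can be arranged to descend, passing to quotients yields the homotopy equivalence $\mathcal H(\kappa)\simeq \mathcal N(\mathscr D_{\mathrm{finite}}(\kappa))/\mathrm{MCG}(S_g,\Sigma)$ claimed in the first sentence.

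For the second sentence, I would show that $\mathrm{MCG}(S_g,\Sigma)$ acts on the set of veering triangulations of $(S_g,\Sigma)$ compatible with $\kappa$ with only finitely many orbits; this reduces to the standard fact that there are only finitely many combinatorial triangulations of a surface with a prescribed number of vertices and prescribed angle data at each vertex. The simplicial action on $\mathcal N(\mathscr D_{\mathrm{finite}}(\kappa))$ need not be regular, because a group element may permute the vertices of a simplex it preserves; barycentric subdivision cures this in the standard way, producing a regular simplicial action whose quotient is an honest simplicial complex $\mathcal I(\kappa)$, finite by the orbit-finiteness result.

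The main obstacle will be verifying the hypotheses of the equivariant Nerve Lemma. One must fix a precise convention for the cover (open thickenings of closed isodelaunay cells, or closed cells with a suitable cofibration hypothesis), and then confirm that even intersections of infinitely many cells are either empty or contractible. This is where the classification of adjacencies attributed to Filip enters critically: without control over the local combinatorics of the decomposition one cannot exclude pathological intersection patterns that would obstruct the nerve comparison, and ensuring that only the intersections catalogued in $\mathscr D_{\mathrm{finite}}(\kappa)$ actually occur is the heart of the argument.
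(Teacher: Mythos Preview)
Your outline captures the broad shape of the argument---apply an equivariant Nerve Lemma to an isodelaunay cover, then count orbits---but it misses the key structural step, and this gap would cause the argument to fail as written.

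You treat $\mathscr D_{\mathrm{finite}}(\kappa)$ as a cover of $\mathcal H_{\mathrm{marked}}(\kappa)$ and hope the classification theorem guarantees that ``only the intersections catalogued in $\mathscr D_{\mathrm{finite}}(\kappa)$ actually occur.'' That is not what the classification says, and it is not true: Proposition~\ref{infinite adjacency phenomenon} establishes that the nerve of the full isodelaunay cover $\mathscr D(\kappa)$ of $\mathcal H_{\mathrm{marked}}(\kappa)$ is genuinely \emph{not} locally finite. The infinitely many adjacencies are real and cannot be argued away. What the paper actually does is define the triple intersection locus $\mathcal T(\kappa)\subset\mathcal H_{\mathrm{marked}}(\kappa)$, take $\mathscr D_{\mathrm{finite}}(\kappa)$ to be the induced cover of the \emph{complement} $\mathcal H_{\mathrm{marked}}(\kappa)\setminus\mathcal T(\kappa)$, and then prove separately (Theorem~\ref{homotopy theorem}, via the modulus-shrinking homotopy and Lemma~\ref{non-decreasing intersection numbers}) that deleting $\mathcal T(\kappa)$ does not change the $\mathrm{MCG}(S_g,\Sigma)$-equivariant homotopy type. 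The classification result (Theorem~\ref{classification result}) is used not to rule out excess intersections, but to show that every excess intersection lies in $\mathcal T(\kappa)$, i.e.\ in a surface with a horizontal or vertical cylinder of modulus exceeding~$1$, so that the nerve becomes locally finite only after the excision.

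Your plan therefore needs an additional ingredient: a deformation retraction of $\mathcal H_{\mathrm{marked}}(\kappa)$ onto a subspace on which the isodelaunay cover has locally finite nerve. Without it, you cannot apply the Nerve Lemma to a cover whose nerve models $\mathcal N(\mathscr D_{\mathrm{finite}}(\kappa))$ and simultaneously covers something homotopy equivalent to $\mathcal H_{\mathrm{marked}}(\kappa)$. The remaining parts of your outline---convexity of intersections in period coordinates, descent to the quotient, finiteness of orbits of Delaunay data, and barycentric subdivision to obtain a simplicial quotient---match the paper's approach.
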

\noindent We call $\mathcal I(\kappa)$ the \emph{isodelaunay complex}. In Section \ref{complex section} we prove Theorem \ref{intro main} and outline how $\mathcal I(\kappa)$ may be computed explicitly via a method that requires only linear programming and the combinatorics of surface triangulations. Further work is presently underway to implement this computation of $\mathcal I(\kappa)$, building off of the software package \texttt{veerer} \cite{veerer} designed to handle computations with veering triangulations. Such a computation is anticipated to produce calculations of Betti numbers and fundamental group presentations for $\mathcal H(\kappa)$ when $\kappa$ is small. It is hoped that these calculations, as well as further theoretical understanding of the isodelaunay complex, will shed light on the topology of $\mathcal H(\kappa)$ in analogy with Harer's complex for $\mathcal M_{g,n}$.

The complex $\mathcal N(\mathscr D_{\mathrm{finite}}(\kappa))$ is constructed by studying $L^\infty$-Delaunay triangulations of abelian differentials. Such triangulations were first introduced in \cite{gueritaud}, and were further investigated in \cite{frankel-cat}. They can be seen as a special case of veering triangulations, which were first introduced in the context of pseudo-Anosov mapping tori in \cite{agol}. Whereas we use $L^\infty$-Delaunay triangulations to analyze the topology of $\mathcal H(\kappa)$, they were used in \cite{gueritaud} and \cite{frankel-cat} to study pseudo-Anosov mapping classes and the Teichm\"uller geodesic flow; see also \cite{minsky-taylor} and \cite{bell-et-al} for the role of veering triangulations in this direction. We concern ourselves with subsets of $\mathcal H(\kappa)$ of differentials that share the same $L^\infty$-Delaunay triangulation, which we call \emph{isodelaunay regions}. In \cite{frankel-cat}, Frankel relates an observation made by Simion Filip on the complicated intersection patterns of these regions, a phenomenon that we call the \emph{infinite adjacency phenomenon}, and anticipates a finitary simplification of these complicated patterns; see Remark \ref{frankel's remark}. We provide such a simplification in Theorem \ref{classification result}, and in Section \ref{complex section} we use this to construct the complex $\mathcal N(\mathscr D_{\mathrm{finite}}(\kappa))$ and prove Theorem \ref{intro main}. Isodelaunay regions form a covering of $\mathcal H(\kappa)$ by finitely many coordinate charts, and our concrete understanding of these regions opens the door to the study of invariant subvarieties of $\mathcal H(\kappa)$ via explicit coordinate representations.

\subsubsection*{Overview of the main argument}

The infinite adjacency phenomenon and its role as an obstruction to constructing a finite simplicial complex homotopy equivalent to $\mathcal H(\kappa)$ are elucidated by the following example. We remark that this example is highly analogous to the case of $\mathcal H(0)$.

Consider the covering of $\R^3$ by closed sets $A_i = [i, i+1] \times \R \times [0,\infty)$ and $B_i = \R \times [i, i+1] \times (-\infty, 0]$ for $i \in \Z$. This covering exhibits an infinite adjacency phenomenon: each $A_i$ intersects every $B_j$ nontrivially. The action of $G = \Z$ on $\R^3$ via $n.(x,y,z) = (x+n,y+n,z)$ satisfies $n.A_i = A_{i+n}$ and $n.B_i = B_{i+n}$. The covering $\{A_i, B_i\}_{i \in \Z}$ is therefore partitioned into two $G$-orbits, and hence $\R^3/G$ is covered by the two sets $A = GA_0$ and $B = GB_0$. A remnant of the infinite adjacency phenomenon persists: the common boundary of $A$ and $B$ is the union of infinitely many $G$-orbits of sets $G(A_i \cap B_j)$. In the case of a more well-behaved covering of a $G$-space $X = \bigcup_{i \in I} U_i$, the nerve $\mathcal N(\{U_i\}_{i \in I})$ gives rise to a finite simplicial complex $\mathcal N(\{U_i\}_{i \in I})/G$ homotopy equivalent to the quotient $X/G$. Our remnant of the infinite adjacency phenomenon obstructs this in our example, since there is an edge of $\mathcal N(\{A_i, B_i\}_{i \in \Z})/G$  for each of the infinitely many $G(A_i \cap B_j)$.

The stratum $\mathcal H(\kappa)$ can be realized as the quotient of a space $\mathcal H_{\mathrm{marked}}(\kappa)$ by a mapping class group $G = \mathrm{MCG}(S_g, \Sigma)$, and this space admits a covering $\mathscr D(\kappa)$ by the closures of isodelaunay regions. Again we have an infinite adjacency phenomenon: the nerve $\mathcal N(\mathscr D(\kappa))$ is not locally finite. Hence the quotient $\mathcal N(\mathscr D(\kappa))/G$ is not a finite simplicial complex. To obtain a finite complex homotopy equivalent to $\mathcal H(\kappa)$, we find a $G$-invariant subset $\mathcal T(\kappa) \subset \mathcal H_{\mathrm{marked}}(\kappa)$ such that $\mathcal H_{\mathrm{marked}}(\kappa) \setminus \mathcal T(\kappa)$ is homotopy equivalent to $\mathcal H_{\mathrm{marked}}(\kappa)$, and the nerve of the induced covering $\mathscr D_{\mathrm{finite}}(\kappa)$ of $\mathcal H_{\mathrm{marked}}(\kappa) \setminus \mathcal T(\kappa)$ is locally finite. The sets of $\mathscr D_{\mathrm{finite}}(\kappa)$ have nontrivial stabilizers in $G$, and so we take the second barycentric subdivision $\mathcal N(\mathscr D_{\mathrm{finite}}(\kappa))''$ to ensure that the quotient $\mathcal I(\kappa) = \mathcal N(\mathscr D_{\mathrm{finite}}(\kappa))''/G$ is a simplicial complex. Finally, we use our equivariant Nerve Lemma to conclude $\mathcal H(\kappa) \simeq \mathcal N(\mathscr D_{\mathrm{finite}}(\kappa))''/G$.

\subsubsection*{Outline of the paper}

In Section \ref{isodelaunay section}, we introduce our primary geometric tool, the $L^\infty$-Delaunay triangulation, and establish the basic properties of isodelaunay regions. In Section \ref{decomposition section}, we discuss the structural properties of the decomposition of $\mathcal H(\kappa)$ into such regions. Section \ref{infinite adjacency section} is the core technical heart of the paper, in which we introduce and resolve the problem that is the infinite adjacency phenomenon. In Section \ref{complex section}, we use our classification to produce the finite simplicial complex $\mathcal I(\kappa)$ that is homotopy equivalent to $\mathcal H(\kappa)$, and we outline a method for its explicit computation. In the appendix, we formulate and prove a more general form of the traditional Nerve Lemma than currently exists in the literature, which we use to establish the homotopy equivalence between $\mathcal I(\kappa)$ and $\mathcal H(\kappa)$.
\subsubsection*{Acknowledgments}
The author would like to thank his advisor Alex Wright for his guidance throughout the course of this project, as well as Sayantan Khan, Saul Schleimer, and Christopher Zhang for helpful conversations. The author is grateful to have been partially supported by NSF Grant DMS 1856155.

\section{Isodelaunay regions}\label{isodelaunay section}

We begin this section by recording the definitions and notation that will be fundamental to our study of strata of abelian differentials. Let $g \geq 1$, let $S_g$ be a surface of genus $g$, and let $\Sigma = \{x_1,\dots,x_n\} \subset S_g$ be a finite subset. Recall the following definition.
\begin{defn}
The structure of a \emph{translation surface} on $(S_g,\Sigma)$ is an atlas of charts $U \subset S_g\setminus\Sigma \to \C$ such that
\begin{enumerate}
\item The changes-of-coordinates are Euclidean translations $z \mapsto z+\alpha$, where $\alpha \in \C$, and hence induce a Euclidean metric on $S_g \setminus \Sigma$,
\item The points $x_i$ of $\Sigma$ are cone singularities of the metric completion of this Euclidean metric to $S_g$.
\end{enumerate}
\end{defn}

\begin{rem}
Just as the translation surface structure induces a Euclidean metric on $S_g \setminus \Sigma$ via pullback from $\C$, it also induces a metric by pulling back the metric given by the $L^\infty$ norm $\|\cdot\|_\infty$. Open balls of this induced metric are the images of what we call $L^\infty$\emph{-squares} in the following subsection.
\end{rem}

The angles of the Euclidean cone singularities are $2\pi(k_i + 1)$, where the $k_i$ are nonnegative integers whose sum is $-\chi(S_g) = 2g-2$. If $k_i = 0$, we say that $x_i$ is a \emph{marked point} of the translation surface.

\begin{defn}[Stratum of marked translation surfaces]
Let $\kappa = (k_1,\dots,k_n)$ be any tuple of nonnegative integers whose sum is $2g-2$. Then we denote by $\mathcal H_\mathrm{marked}(\kappa)$ the space of translation surface structures, up to isotopy rel $\Sigma$, on $S_g$ with a cone singularity of angle $2\pi(k_i+1)$ at $x_i$ for each $1 \leq i \leq n$.
\end{defn}

A Riemann surface structure $X$ on $S_g$ with an abelian differential $\omega \in H^{1,0}(X)$ whose set of zeroes\footnote{We consider marked points as ``zeroes of order 0'' for $\omega$.} is $\Sigma$ induces a translation surface structure $M$ on $(S_g,\Sigma)$, and it is not hard to see that all translation surface structures arise in this way. We may therefore use the terms ``abelian differential'' and ``translation surface'' interchangeably, and write $M = (X,\omega)$.

\begin{defn}
By the \emph{universal cover} $\Tilde M \twoheadrightarrow M$ of a translation surface $M = (X,\omega)$, we mean the universal cover $p:\Tilde X \twoheadrightarrow X$ endowed with the differential form $\Tilde \omega \coloneqq p^*\omega$. Equivalently, $\Tilde M$ is the universal cover $p:\Tilde{S_g} \twoheadrightarrow S_g$ endowed with the metric induced by pulling back, along $p$, the Euclidean metric on $S_g$ given by $M$.
\end{defn}

\begin{defn}
Let $M = (X,\omega) \in \mathcal H_{\mathrm{marked}}(\kappa)$ and let $\gamma \in H_1(S_g,\Sigma; \Z)$. Then $\gamma$ and $M$ determine a complex number via
\[
\gamma(M) \coloneqq \int_\gamma \omega \in \C.
\]
Let $\mathscr B = \{\gamma_1,\dots,\gamma_{2g+n-1}\}$ be any basis for $H_1(S_g,\Sigma;\Z)$. Then we have a map $\Phi:\mathcal H_{\mathrm{marked}}(\kappa) \to \C^{2g+n-1}$ given by
\[
\Phi (M)_j \coloneqq \gamma_j(M) = \int_{\gamma_j}\omega
\]
for all $1 \leq j \leq 2g+n-1$.
The map $\Phi$, called the \emph{period coordinate map} is a local homeomorphism. An open subset of $\mathcal H_{\mathrm{marked}}(\kappa)$ on which $\Phi$ is a homeomorphism onto its image is called a \emph{period coordinate chart}.
\end{defn}

The space $\mathcal H_{\mathrm{marked}}(\kappa)$ is an auxiliary tool for studying the ordinary stratum of translation surfaces.

\begin{defn}[Stratum of translation surfaces]
Let $\kappa = (k_1,\dots,k_n)$ be any tuple of nonnegative integers whose sum is $2g-2$. Then we denote by $\mathcal H(\kappa)$ the space of translation surface structures, up to isometry, on $S_g$ with a cone singularity of angle $2\pi(k_i+1)$ at $x_i$ for each $1 \leq i \leq n$.
\end{defn}

\begin{defn}[Mapping class group]
We denote by $\mathrm{MCG}(S_g, \Sigma)$ the group of orientation-preserving homeomorphisms $\phi: S_g \xrightarrow{\sim} S_g$ satisfying $\phi(\Sigma) = \Sigma$, considered up to isotopy rel $\Sigma$.
\end{defn}

We have an infinite-degree orbifold covering map $\mathcal H_{\mathrm{marked}}(\kappa) \twoheadrightarrow \mathcal H(\kappa)$ given by forgetting the isotopy marking. Note that this need not be a universal covering. Observe that $\mathrm{MCG}(S_g,\Sigma)$ acts on $\mathcal H_{\mathrm{marked}}(\kappa)$ by re-marking, and that this covering map is the quotient projection $\mathcal H_{\mathrm{marked}}(\kappa) \twoheadrightarrow \mathcal H_{\mathrm{marked}}(\kappa) / \mathrm{MCG}(S_g, \Sigma) = \mathcal H(\kappa)$.

\begin{rem}\label{induced linear structure}
The changes-of-coordinates between period coordinate charts on $\mathcal H_{\mathrm{marked}}(\kappa)$ are induced by changes-of-basis for $H_1(S_g, \Sigma; \Z)$, and hence lie in $\GL_{2g+n-1}(\Z)$, and thereby endow $\mathcal H_{\mathrm{marked}}(\kappa)$ with the structure of an integral affine manifold. The re-marking action of $\mathrm{MCG}(S_g,\Sigma)$ acts via integral changes-of-basis in local period coordinates, and hence respects the integral affine structure. The covering map $\mathcal H_{\mathrm{marked}}(\kappa) \twoheadrightarrow \mathcal H(\kappa)$ thus endows $\mathcal H(\kappa)$ with the structure of an integral affine orbifold.
\end{rem}

\begin{rem}\label{gl2r action}
The group $\GL_2^+(\R)$ acts on $\mathcal H_{\text{marked}}(\kappa)$ and $\mathcal H(\kappa)$ as follows. Given $A \in \GL_2^+(\R)$ and a translation surface $M$, if $\phi:U \to \C$ is a coordinate chart for $M$, then $A \circ \phi$ is a coordinate chart for $AM$.
\end{rem}

\setcounter{subsection}{1}
\subsection{The $L^\infty$-Delaunay triangulation}

\begin{defn}
An $L^\infty$\emph{-square} in a translation surface $M = (X, \omega)$ is an immersion
\[
S: (0,h)^2 \to X, \quad \quad h > 0,
\]
such that $S^*\omega = dz$. In particular, no singularities of $M$ lie in the image of $S$. We write $\mathrm{height}(S) = h$. An $L^\infty$-square is \emph{maximal} if it is not properly contained in any other $L^\infty$-square.

We will find it convenient to refer to the continuous extension $\overline{S}: [0,h]^2 \to X$. We will say that a singularity $x$ \emph{lies on the boundary} of an $L^\infty$-square $S$ if it lies in the image of $\overline{S}$.
\end{defn}

\begin{rem}\label{lifting remark}
Note that it may happen that $\overline{S}\1(x)$ consists of more than one point. We will therefore often find it convenient to refer to a lift $\Tilde{S}: (0,h) \to \Tilde M$ of $S$ to the universal cover $\Tilde M$ when we want to discuss singularities that meet the boundary of an $L^\infty$-square $S$, because the extension $\overline{\Tilde{S}}$ is an embedding. As an embedding, we will often identify it with its image.
\end{rem}

\begin{defn}
We consider paths on $S_g$ that are geodesic with respect to the Euclidean metric induced by $M$.
A \emph{saddle connection} is a geodesic path whose endpoints lie in $\Sigma$ and whose interior contains no points of $\Sigma$. We say that a geodesic path $\gamma$ is \emph{inscribed} in an $L^\infty$-square $S:(0,h)^2 \to M$ if $\gamma$ is the image under $\overline{S}$ of a line segment with endpoints on the boundary of $[0,h]^2$.
\end{defn}

\begin{defn}\label{linfty delaunay triangle defn}
Let $T$ be a triangle on $M$ whose edges are geodesic paths. We say that an $L^\infty$-square $S:(0,h)^2 \to M$ is a \emph{circumsquare} of $T$ if $T$ is the image under $\overline{S}$ of a triangle inscribed in $[0,h]^2$. We say that $T$ is an $L^\infty$-\emph{Delaunay triangle} if it has a circumsquare $S$, and if
\begin{enumerate}
\item\label{local delaunay condition} The only points $x \in [0,h]^2$ such that $\overline S(x) \in \Sigma$ are the vertices of the inscribed triangle.
\item\label{local veering condition} The edges of $T$ are neither horizontal nor vertical.
\end{enumerate}
Note that the edges of an $L^\infty$-Delaunay triangle are saddle connections, since their interiors lie in the image of $S$, and hence cannot not contain any points of $\Sigma$.
\end{defn}

\begin{defn}\label{delaunay triangle defn}
Let $\Delta$ be a triangulation of $S_g$ whose set of vertices is $\Sigma$. We say that $\Delta$ is an $L^\infty$-\emph{Delaunay triangulation} of a translation surface $M$ if every triangle of $\Delta$ is $L^\infty$-Delaunay on $M$. We denote by $E(\Delta)$ the set of edges of a triangulation.
\end{defn}

\begin{defn}\label{delaunay defn}
A translation surface $M$ is $L^\infty$\emph{-generic} if
\begin{enumerate}
\item\label{delaunay condition} No maximal $L^\infty$-square in the universal cover $p:\Tilde M \twoheadrightarrow M$ has more than 3 singularities on its boundary,
\item\label{veering condition} No saddle connection in $\Tilde M$ lies on the boundary of a maximal $L^\infty$-square.
\end{enumerate}
We denote by $\mathcal G(\kappa) \subset \mathcal H(\kappa)$ and $\mathcal G_{\mathrm{marked}}(\kappa) \subset \mathcal H_{\mathrm{marked}}(\kappa)$ the subspaces of $L^\infty$-generic surfaces. Note that $\mathcal G_{\mathrm{marked}}(\kappa)$ is the preimage of $\mathcal G(\kappa)$ under the covering $\mathcal H_{\mathrm{marked}}(\kappa) \twoheadrightarrow \mathcal H(\kappa)$.
\end{defn}

\begin{defn}\label{isodelaunay defn}
An \emph{isodelaunay region} $\mathcal D^\circ$ of $\mathcal H_{\mathrm{marked}}(\kappa)$ is a connected component of $\mathcal G_{\mathrm{marked}}(\kappa)$.
\end{defn}

\begin{rem}
The analogous triangulation for the Euclidean $L^2$-metric, often simply called the Delaunay triangulation, is more traditionally studied. However, it remains an open question whether the analogous $L^2$-isodelaunay regions are even contractible, whereas the $L^\infty$-isodelaunay regions are known to be convex polytopes when expressed in period coordinates (Theorem \ref{polytope theorem}). We use this fact in many places besides its implication that the regions are contractible. Whether or not they turn out to be contractible, $L^2$-isodelaunay regions are not polytopes when expressed in period coordinates, because the inequalities that must be locally satisfied in order for a family of translation surfaces to maintain the same $L^2$-Delaunay triangulation are of higher order.
\end{rem}

In the remainder of this subsection, we discuss the basic properties of $L^\infty$-generic surfaces.

\begin{lem}\label{open density}
The set $\mathcal G(\kappa)$ is an open dense subset of $\mathcal H(\kappa)$.
\end{lem}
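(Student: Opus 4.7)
The plan is to establish openness and density as separate arguments, both carried out upstairs in $\mathcal{H}_{\mathrm{marked}}(\kappa)$ and then pushed down through the covering map $\mathcal H_{\mathrm{marked}}(\kappa) \twoheadrightarrow \mathcal H(\kappa)$, which is open.

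For openness, take $M \in \mathcal G(\kappa)$ and choose a lift $M_0 \in \mathcal G_{\mathrm{marked}}(\kappa)$; it suffices to find an open neighborhood of $M_0$ in $\mathcal G_{\mathrm{marked}}(\kappa)$. Condition (\ref{delaunay condition}) of Definition \ref{delaunay defn} forces each maximal $L^\infty$-square in $\tilde M_0$ to have exactly three singularities on its boundary (at most three by hypothesis, at least three by maximality), and condition (\ref{veering condition}) guarantees these singularities form an inscribed triangle satisfying conditions (\ref{local delaunay condition}) and (\ref{local veering condition}) of Definition \ref{linfty delaunay triangle defn}. These triangles assemble $\pi_1$-equivariantly into an $L^\infty$-Delaunay triangulation $\Delta$ of $M_0$. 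The condition that a fixed combinatorial triangulation $\Delta$ remains $L^\infty$-Delaunay for a nearby surface $M'$ is cut out by finitely many strict inequalities in period coordinates: each triangle of $\Delta$ must continue to admit a circumsquare whose open interior and boundary avoid all singularities other than the three vertices, and each edge must remain off the horizontal and vertical axes. Since all these conditions are open, a period-coordinate neighborhood of $M_0$ lies entirely in $\mathcal G_{\mathrm{marked}}(\kappa)$.

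For density, the plan is to show the complement is meager and invoke Baire. In any period coordinate chart $U$, failure of condition (\ref{veering condition}) requires a saddle connection to be horizontal or vertical, and is therefore contained in
\[
\bigcup_{\gamma} \bigl(\{M \in U : \Re(\gamma(M)) = 0\} \cup \{M \in U : \Im(\gamma(M)) = 0\}\bigr),
\]
where $\gamma$ ranges over the countably many primitive saddle connection classes. Each piece is a codimension-one real-analytic subvariety. Failure of condition (\ref{delaunay condition}) requires four or more singularities in $\tilde M$ to lie on a common maximal $L^\infty$-square boundary; indexed over the countable set of combinatorial configurations (specifying which singularity lifts sit on which side of the square and with what cyclic order), each such alignment again imposes a nontrivial real-analytic equation on the relevant periods. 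The non-generic locus is thus contained in a countable union of codimension-$\geq 1$ real-analytic subvarieties, hence is meager; since $\mathcal H_{\mathrm{marked}}(\kappa)$ is a manifold and therefore Baire, $\mathcal G_{\mathrm{marked}}(\kappa)$ is dense, and density descends.

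The main obstacle I anticipate is verifying that the ``four singularities on a common square'' condition is genuinely of positive codimension for every combinatorial configuration; one must rule out configurations in which three of the singularities being on the boundary already forces a fourth to be there as well, rather than cutting out a further equation. This will require unpacking the combinatorial configuration into explicit linear conditions on the periods of the saddle connections between the four singularities and checking that at least one of these conditions is independent of those defining the three-singularity circumsquare. The saddle-connection alignment case for condition (\ref{veering condition}) is comparatively immediate.
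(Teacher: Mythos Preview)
Your density argument is correct and is essentially the paper's argument made precise: the paper simply observes that condition (\ref{veering condition}) fails only when some saddle connection is horizontal or vertical, and condition (\ref{delaunay condition}) fails only when four singularities on a common square impose an equality among periods, so both failure loci are nowhere dense. Your Baire-category formulation is a cleaner way to package the same observation, and your worry about the four-singularity configuration is unnecessary: once three singularities lie on the square boundary, a fourth one being on that same boundary is a single additional linear equation in periods (matching either a real or imaginary part), independent of the three-singularity data.

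Your openness argument, however, is over-engineered and contains a genuine gap. You construct the $L^\infty$-Delaunay triangulation $\Delta$ of $M_0$ (this is the content of Lemma \ref{unique triangulation}, which comes \emph{after} the present lemma), show that $\Delta$ remains $L^\infty$-Delaunay on nearby $M'$, and then conclude $M' \in \mathcal G_{\mathrm{marked}}(\kappa)$. But this last step---that admitting an $L^\infty$-Delaunay triangulation implies $L^\infty$-genericity---is exactly the nontrivial direction recorded later as Remark \ref{triangulable is generic}, whose justification depends on Lemma \ref{full closure region} and hence on the machinery of \S\ref{isodelaunay section}--\S\ref{decomposition section}. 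Using it here is circular. The paper avoids this by simply asserting openness as clear: the failure of either condition in Definition \ref{delaunay defn} is the existence of a maximal square witnessing a coincidence (four singularities aligned, or a saddle connection on an edge), and such coincidences persist under limits, so the non-generic locus is closed. If you want a self-contained argument, argue directly that the complement is closed rather than routing through the triangulation.
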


\begin{proof}
Clearly $\mathcal G(\kappa)$ is an open subset of $\mathcal H(\kappa)$. The set of translation surfaces having no horizontal or vertical saddle connections is a dense subspace of $\mathcal H_{\mathrm{marked}}(\kappa)$, and therefore so too is the set of translation surfaces satisfying condition (\ref{veering condition}) of Definition \ref{delaunay defn}. Furthermore, if $4$ or more singularities of a translation surface lie on a common maximal $L^\infty$-square, then this imposes an equality on the lengths and widths of the saddle connections joining these singularities. Therefore also the set of translation surfaces satisfying condition (\ref{delaunay condition}) of Definition \ref{delaunay defn} is dense in $\mathcal H(\kappa)$. We conclude that $\mathcal G(\kappa)$ is an open dense subset of $\mathcal H(\kappa)$.
\end{proof}

\begin{rem}
Note that condition (\ref{veering condition}) of Definition \ref{delaunay defn} is strictly weaker than the \emph{Keane condition} presented in Definition 2.9 of \cite{bell-et-al}, which requires a translation surface to have no vertical or horizontal saddle connections. The set of Keane translation surfaces is not an open subset of $\mathcal H(\kappa)$, because every translation surface has saddle connections in a dense set of directions.
\end{rem}

\begin{lem}\label{unique triangulation}
Let $M \in \mathcal G(\kappa)$. Then $M$ has a unique $L^\infty$-Delaunay triangulation.
\end{lem}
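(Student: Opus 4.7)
Plan. I would prove existence and uniqueness together by establishing a canonical bijection between $L^\infty$-Delaunay triangles on $M$ and (deck-group orbits of) maximal $L^\infty$-squares $\Tilde S \subset \Tilde M$ with exactly three singularities on $\partial \Tilde S$, and then showing that the resulting triangles tile $M$ exactly once. The genericity hypotheses in Definition \ref{delaunay defn} are precisely what makes this correspondence rigid.

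One direction of the correspondence is direct: if $T$ is a Delaunay triangle on $M$ with circumsquare $S$, and $\Tilde T, \Tilde S$ are lifts to $\Tilde M$ (see Remark \ref{lifting remark}), then condition (\ref{local delaunay condition}) of Definition \ref{linfty delaunay triangle defn} puts exactly three singularities, namely the vertices of $\Tilde T$, on $\partial \Tilde S$, and any strict enlargement of $\Tilde S$ would push these three singularities into the open interior of the enlarged square, contradicting the requirement that an $L^\infty$-square contain no singularity in its open interior. Hence $\Tilde S$ is maximal. For the reverse direction, a maximal $L^\infty$-square $\Tilde S$ on an $L^\infty$-generic surface has at most three boundary singularities by condition (\ref{delaunay condition}) of Definition \ref{delaunay defn}, and no two of them can lie on the same side of $\Tilde S$ by condition (\ref{veering condition}) (else they would bound a horizontal or vertical saddle connection on the boundary). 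A case analysis of how the four diagonal enlargements of $\Tilde S$ may be obstructed then shows that exactly three such singularities occur and that they form a non-degenerate, non-axis-aligned inscribed triangle satisfying Definition \ref{linfty delaunay triangle defn}.

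Uniqueness now follows: any two Delaunay triangulations $\Delta_1, \Delta_2$ must use the same triangles, because any non-singular $p \in M$ lifts to a point $\Tilde p \in \Tilde M$ surrounded by a unique maximal $L^\infty$-square whose inscribed triangle contains $\Tilde p$, and each $\Delta_i$ selects exactly this triangle at $p$. Existence follows by taking $\Delta$ to be the full collection of inscribed triangles of all such maximal squares (mod deck group) and verifying that they cover $M$ with mutually disjoint interiors and meet along saddle-connection edges shared by adjacent maximal squares. The main obstacle I anticipate is the reverse direction of the correspondence---specifically, showing that maximality forces \emph{at least} three boundary singularities (Definition \ref{delaunay defn} only bounds them above) and that they distribute around the square so as to form a triangle enclosing its ``center.'' This requires using condition (\ref{veering condition}) to rule out axis-aligned degeneracies, together with the observation that each of the four cardinal directions of expansion of $\Tilde S$ must be independently obstructed by some singularity on $\partial \Tilde S$.
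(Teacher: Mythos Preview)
Your plan is essentially correct and runs parallel to the paper's argument, but with one substantive difference and one wording issue worth flagging.

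\textbf{Difference in existence.} The paper does not carry out the reverse direction of your correspondence or the tiling verification; it simply cites Proposition~2.1 of \cite{gueritaud} for the fact that the saddle connections arising from maximal $L^\infty$-squares with three boundary singularities assemble into a triangulation of $(S_g,\Sigma)$. Your outline (the four diagonal enlargements forcing at least three boundary singularities, then checking the resulting triangles cover $M$ with disjoint interiors) is precisely the content of that cited result, so you are proposing to re-derive it rather than invoke it. This is fine, and your identification of where the work lies is accurate.

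\textbf{Wording issue in uniqueness.} Your uniqueness argument asserts that a generic $\Tilde p\in\Tilde M$ is ``surrounded by a unique maximal $L^\infty$-square whose inscribed triangle contains $\Tilde p$.'' There is no unique maximal square through $\Tilde p$; many maximal squares can contain the same point. What is unique (once the tiling is established) is the Delaunay \emph{triangle} containing $\Tilde p$. The paper's uniqueness argument avoids this by a counting observation: every triangle of an arbitrary $L^\infty$-Delaunay triangulation $\Delta'$ has a circumsquare that is maximal with exactly three boundary singularities, so every edge of $\Delta'$ already appears in the canonical collection $\Tilde\Delta$; since $\Delta'$ and $\Delta$ are both triangulations with vertex set $\Sigma$, the inclusion $E(\Delta')\subseteq E(\Delta)$ forces equality. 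You can repair your version by replacing the ``unique maximal square'' phrasing with ``unique Delaunay triangle'' and deriving that uniqueness from the tiling, or simply adopt the paper's edge-inclusion argument, which is shorter.

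A minor point: your claim that ``any strict enlargement of $\Tilde S$ would push these three singularities into the open interior'' is slightly too strong as stated (an enlargement fixing one corner keeps two sides, hence possibly two singularities, on the boundary); what you need is only that \emph{at least one} singularity enters the interior, which follows because the three vertices occupy at least three distinct sides of $\Tilde S$ by condition~(\ref{local veering condition}).
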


\begin{proof}
We begin by working in the universal cover $\Tilde M \twoheadrightarrow M$. For each maximal $L^\infty$-square $S: (0,h)^2 \to \Tilde M$ with 3 singularities on its boundary, the $3$ saddle connections joining these singularities do not intersect each other. Consider the collection $\Tilde \Delta$ of all saddle connections obtained in this way. Let $\Delta = p(\Tilde \Delta)$ be collection of saddle connections on $M$ that are images of saddle connections in $\Tilde \Delta$ under the universal covering map. Proposition 2.1 of \cite{gueritaud} shows that $\Delta$ is a finite triangulation of $S_g$ with vertices in $\Sigma$. By property (\ref{delaunay condition}) of Definition \ref{delaunay defn}, every triangle of $\Delta$ satisfies property (\ref{local delaunay condition}) of Definition \ref{linfty delaunay triangle defn}, and similarly for properties (\ref{veering condition}) of these definitions. Hence $\Delta$ is an $L^\infty$-Delaunay triangulation of $M$.

Let $\Delta'$ be any $L^\infty$-Delaunay triangulation of $M$. Every triangle of $\Delta$ has a circumsquare with $3$ singularities on its boundary, and hence every edge of $\Delta$ lifts along the covering projection to an edge of $\Tilde \Delta$. Therefore $\Delta' = \Delta$, and we are done.
\end{proof}

\begin{rem}\label{triangulable is generic}
It is a consequence of Lemmas \ref{open density} and \ref{unique triangulation} together with Lemma \ref{full closure region} that a translation surface $M$ has an $L^\infty$-Delaunay triangulation if and only if $M$ is $L^\infty$-generic.
\end{rem}

\begin{lem}\label{L^infty systole}
Suppose a translation surface $M$ is $L^\infty$-generic, and let $\|\cdot\|_\infty$ denote the $L^\infty$ norm on $\C = \R^2$. Then every saddle connection $\gamma$ that minimizes the quantity $\|\gamma(M)\|_\infty$ is an edge of the $L^\infty$-Delaunay triangulation $\Delta$ of $M$.
\end{lem}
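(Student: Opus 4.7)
The plan is to lift $\gamma$ to the universal cover, exhibit an axis-aligned $L^\infty$-square inscribing $\tilde\gamma$, enlarge it to a maximal $L^\infty$-square $K^*$, and use $L^\infty$-genericity to identify $\tilde\gamma$ as an edge of the Delaunay triangle inscribed in $K^*$.

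Set $L = \|\gamma(M)\|_\infty$ and lift $\gamma$ to $\tilde\gamma \subset \tilde M$ with endpoints $\tilde p, \tilde q$. Because condition (\ref{veering condition}) of Definition \ref{delaunay defn} forbids horizontal and vertical saddle connections, after a translation and an axis reflection I may choose coordinates with $\tilde p = (0,0)$ and $\tilde q = (L, y)$ for some $0 < y \leq L$. I will first show that the open axis-aligned square $K_0 = (0, L)^2$ is an $L^\infty$-square, i.e., contains no singularities: if $(a, b) \in (0, L)^2$ were a singularity, then the first singularity $\tilde s$ encountered along the ray from $\tilde p$ through $(a, b)$ would satisfy $\|\tilde s - \tilde p\|_\infty \leq \max(a, b) < L$, and the segment from $\tilde p$ to $\tilde s$ would be a saddle connection of $L^\infty$-length strictly less than $L$, contradicting the minimality of $L$.

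Next I will enlarge $K_0$ to a maximal $L^\infty$-square $K^*$ by applying Zorn's lemma to the set of $L^\infty$-squares in $\tilde M$ containing $K_0$, partially ordered by inclusion. A nested chain of such squares has its union as an upper bound: the union of a nested chain of axis-aligned open squares is again an axis-aligned open square, whose side length is the supremum of the side lengths in the chain, and whose interior is singularity-free because each of its points lies in some chain element. Any strict $L^\infty$-square enlargement of $K^*$ would again contain $K_0$ and so contradict maximality in the partial order; thus $K^*$ is a maximal $L^\infty$-square in the sense of Definition \ref{linfty delaunay triangle defn}. The crucial observation now is that $\tilde p$ and $\tilde q$, being singularities, cannot lie in the interior of any $L^\infty$-square, so both must lie on $\partial K^*$, whence $\tilde\gamma$ is inscribed in $K^*$.

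To conclude, $L^\infty$-genericity guarantees that $K^*$ has exactly three singularities on its boundary: condition (\ref{delaunay condition}) of Definition \ref{delaunay defn} gives at most three, and a short expansion argument rules out configurations with just two (if $\tilde p$ and $\tilde q$ are not at opposite corners of $K^*$ then some side of $K^*$ contains no singularity and can be translated outward, while the opposite-corners case forces $\tilde q = (L, L)$ and $K^* = K_0$, which admits a strict enlargement using the absence of horizontal and vertical saddle connections). The three boundary singularities $\tilde p, \tilde q, \tilde r$ then form an $L^\infty$-Delaunay triangle inscribed in $K^*$, whose edge from $\tilde p$ to $\tilde q$ is $\tilde\gamma$, so $\gamma \in E(\Delta)$. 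The most delicate step will be verifying that the union of the chain in the Zorn argument is a genuinely embedded $L^\infty$-square in $\tilde M$; this reduces to the fact that simply-connected singularity-free regions of $\tilde M$ embed into $\C$ via the developing map.
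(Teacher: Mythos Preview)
Your plan is the right one and matches what the paper intends (the paper only cites the $L^2$ analogue in \cite{boissy-geninska}). There is, however, a genuine misreading: condition (\ref{veering condition}) of Definition \ref{delaunay defn} does \emph{not} forbid horizontal and vertical saddle connections. It only says that no saddle connection lies on the boundary of a maximal $L^\infty$-square, and the remark immediately following Lemma \ref{unique triangulation} explicitly notes that this is strictly weaker than the Keane condition. So your justification for $0 < y$ is wrong as written, and the same error recurs in your treatment of the opposite-corners case (``using the absence of horizontal and vertical saddle connections'').

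The conclusion you want---that an $L^\infty$-shortest $\gamma$ is never horizontal or vertical---is nonetheless true and follows from condition (\ref{veering condition}) together with minimality. If $\gamma$ were horizontal with $\tilde p=(0,0)$ and $\tilde q=(L,0)$, your own argument shows $(0,L)^2$ is an $L^\infty$-square with $\tilde p,\tilde q$ at adjacent bottom corners; any maximal square containing it must keep both singularities on its boundary, and a short case check forces the bottom side of that maximal square to remain on the line $y=0$, so $\gamma$ lies on the boundary of a maximal square, contradicting condition (\ref{veering condition}). A variant of this handles the opposite-corners case. One further imprecision: your closing reduction, that simply-connected singularity-free regions of $\tilde M$ embed into $\C$ via the developing map, is false---a slit annulus around a cone point of angle exceeding $2\pi$ is a counterexample---but you do not need it. The charts on a nested chain of embedded squares in $\tilde M$ glue directly to an injective chart on the union, and the side lengths are bounded since $M$ has a finite maximal empty-disk radius.
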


The proof of Lemma \ref{L^infty systole} is a straightforward analogue of the proof of the analogous fact for $L^2$-Delaunay triangulations of translation surfaces, which can be found as Lemma 3.1 of \cite{boissy-geninska}.

\subsection{Isodelaunay regions are polytopes}

Throughout this subsection, consider $M = (X, \omega) \in \mathcal G_{\mathrm{marked}}(\kappa)$ with $L^\infty$-Delaunay triangulation $\Delta$. We endow each edge $\gamma \in E(\Delta)$ with some arbitrary orientation, so that each $\gamma \in E(\Delta)$ gives a well-defined homology class in $H_1(S_g, \Sigma; \Z)$.

To be clear about terminology, we say that a subset of $\R^d$ is a \emph{convex open polytope} if it is the intersection of finitely many open real half-spaces, i.e. if it is defined by finitely many real-linear strict inequalities. We do not require that a polytope be bounded in $\R^d$. This subsection is devoted to the proof of the following theorem.

\begin{thm}\label{polytope theorem}
Every isodelaunay region in $\mathcal H_{\mathrm{marked}}(\kappa)$ is homeomorphic to a convex open $(4g+2n-2)$-dimensional polytope via the period coordinate map $\Phi$.
\end{thm}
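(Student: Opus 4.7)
The plan is to exhibit, for each isodelaunay region $\mathcal D^\circ \subset \mathcal H_{\mathrm{marked}}(\kappa)$, a finite system of real-linear strict inequalities on period coordinates whose solution set is $\Phi(\mathcal D^\circ)$, and then show $\Phi$ restricts to a homeomorphism onto this polytope. Since $\mathcal D^\circ$ is connected and the $L^\infty$-Delaunay triangulation is unique on $L^\infty$-generic surfaces (Lemma \ref{unique triangulation}), every $M \in \mathcal D^\circ$ shares a common marked triangulation $\Delta$. For each oriented edge $\gamma \in E(\Delta)$, the complex period $\gamma(M)$ is then a $\Z$-linear function of $\Phi(M) \in \C^{2g+n-1}$.

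I rewrite the two genericity conditions of Definition \ref{delaunay defn} as strict real-linear inequalities. Condition (\ref{veering condition}) gives $\Re\,\gamma(M) \neq 0$ and $\Im\,\gamma(M) \neq 0$ for each $\gamma \in E(\Delta)$, with signs determined by a basepoint $M_0$. For condition (\ref{delaunay condition}), I replace the global statement about maximal $L^\infty$-squares in $\Tilde M$ by a local flip condition: for each edge $\gamma$ of $\Delta$ shared by triangles $T_1, T_2$, the vertex of $T_2$ opposite $\gamma$ must lie strictly outside the circumsquare of $T_1$ in the universal cover, and symmetrically. Because the combinatorial type of $\Delta$ fixes which vertex of each triangle is leftmost, rightmost, topmost, and bottommost, the sides of each circumsquare are linear combinations of the periods of that triangle, as is the position of the opposite vertex; strict containment outside the square thus becomes finitely many real-linear strict inequalities. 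Together these yield a convex open polytope $P \subseteq \C^{2g+n-1} \cong \R^{4g+2n-2}$.

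The main obstacle is justifying that these \emph{local} flip inequalities capture the \emph{global} condition that no maximal $L^\infty$-square in $\Tilde M$ carries more than three singularities. I would prove this by a standard local-to-global argument: if a singularity $\Tilde x \in \Tilde\Sigma$ were to lie inside the circumsquare of some triangle $T$ of $\Tilde\Delta$, then walking through a minimal chain of adjacent triangles from $T$ toward $\Tilde x$ would eventually yield two adjacent triangles violating their local flip inequality, a contradiction. Granting this, I would show $\Phi|_{\mathcal D^\circ}$ is a homeomorphism by constructing an explicit inverse $P \to \mathcal D^\circ$: each $p \in P$ specifies an edge vector for every oriented edge of $\Delta$, and assembling the resulting Euclidean triangles according to the gluing pattern of $\Delta$ produces a translation surface in $\mathcal H_{\mathrm{marked}}(\kappa)$ whose period coordinates equal $p$. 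The local flip inequalities ensure that $\Delta$ is its $L^\infty$-Delaunay triangulation, placing the result in $\mathcal D^\circ$; surjectivity is then immediate, and injectivity follows since any $M \in \Phi^{-1}(p) \cap \mathcal D^\circ$ is reconstructed by this same procedure.
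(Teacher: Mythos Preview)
Your outline matches the paper's: fix Delaunay data $(\Delta,v)$, cut out a polytope $\mathcal P^\circ(\Delta,v)$ by sign (``veering'') inequalities together with Delaunay-type inequalities, build a gluing inverse $s_{\Delta,v}$ to $\Phi$, and conclude $\Phi(\mathcal D^\circ) = \mathcal P^\circ(\Delta,v)$. Two points of your sketch deserve care.

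First, your flip condition ``the opposite vertex of $T_2$ lies strictly outside the circumsquare of $T_1$'' is, taken literally, a \emph{disjunction} of four half-space conditions and hence not obviously a convex constraint. The paper instead imposes the diagonal comparison $\|z_{\gamma_Q}\|_\infty < \|z_{\Gamma_Q}\|_\infty$, and only at those quadrilaterals $Q$ whose four sides have alternating slope-signs; Lemma~\ref{quadrilateral inequality lemma} carries out the case analysis showing that, under the veering inequalities, this becomes a \emph{conjunction} of two real-linear strict inequalities. Your formulation can be made to work, but it requires the same kind of case analysis to show that once the slope-sign data are fixed, only one direction of escape from the circumsquare is possible.

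Second, the local-to-global step you flag as the main obstacle is precisely what the paper imports as Theorem~4.2 of \cite{frankel-cat}: a veering triangulation is $L^\infty$-Delaunay if and only if the diagonal comparison holds at every alternating quadrilateral. Granting this, your direct argument for $\Phi(\mathcal D^\circ) = P$ via the two inclusions is valid. The paper argues this equality somewhat differently: it extends $s_{\Delta,v}$ properly to $\mathcal P(\Delta,v) = \overline{\mathcal P^\circ(\Delta,v)} \setminus \mathcal Z_0(\Delta)$ using Masur's compactness criterion (Lemma~\ref{structure of T(M)}), and then checks that boundary points map to non-generic surfaces (via Lemma~\ref{limiting circumsquare}), so that $s_{\Delta,v}(\mathcal P^\circ)$ is a maximal connected subset of $\mathcal G_{\mathrm{marked}}(\kappa)$ and hence equals the component $\mathcal D^\circ$. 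This detour is not needed for Theorem~\ref{polytope theorem} alone, but the extension of $s_{\Delta,v}$ to the closure is reused throughout the remainder of the paper.
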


\subsubsection*{The polytope of complex parameters}

Let $\mathscr B = \{\gamma_1,\dots,\gamma_{2g+n-1}\}$ be the arbitrary basis of $H_1(S_g, \Sigma; \Z)$ we use to define the period coordinate map $\Phi$. For each edge $\gamma \in E(\Delta)$, we can write in homology $\gamma = \sum_j a_j \gamma_j$. For each point $\mathbf z = (z_1,\dots,z_{2g+n-1}) \in \C^{2g+n-1}$, let us write $z_\gamma = \sum_j a_j z_j$.

We shall define a polytope $\mathcal P^\circ(\Delta, v) \subset \C^{2g+n-1}$ via inequalities that the periods of any $M' \in \mathcal G_{\mathrm{marked}}(\kappa)$ must satisfy in order to have the same $L^\infty$-Delaunay triangulation as $M$. We will then show that such polytopes are the desired polytopes of Theorem \ref{polytope theorem}.

Note that a triangle whose vertices lie on the boundary of a square cannot have edges whose slopes all have the same sign. Therefore $\Delta$ is a \emph{veering triangulation}, which means precisely that no triangle of $\Delta$ has edges whose slopes all have the same sign.\footnote{In this context, this is equivalent to other definitions of veering. See Proposition 3.16 of \cite{frankel-cat}. For a discussion of veering triangulations in greater generality, see Section 2 of \cite{bell-et-al}} Let us write $\sign(x) \coloneqq |x|/x$.

\begin{defn}\label{veering conditions}
Let $v_{\Im \gamma}(M) \coloneqq \sign(\Im(\gamma(M)))$ and $v_{\Re \gamma}(M) \coloneqq \sign(\Re(\gamma(M)))$ We say that $\mathbf z \in \C^{2g+n-1}$ satisfies the \emph{veering inequalities} for $M$ if
\[
v_{\Im \gamma}(M)\Im(z_\gamma) > 0 \quad \text{and} \quad v_{\Re \gamma}(M)\Re(z_\gamma) > 0,\, \forall \gamma \in E(\Delta).
\]
We call $v(M) \coloneqq (v_{\Im \gamma}(M), v_{\Re \gamma}(M))_{\gamma \in E(\Delta)} \in \{-1,1\}^{2 |E(\Delta)|}$ the \emph{coefficient vector} for $M$.
\end{defn}

\begin{rem}\label{slope-sign equality}
By condition (\ref{veering condition}) of Definition \ref{delaunay defn} and Remark \ref{triangulable is generic}, each translation surface structure $M' \in \mathcal G_{\mathrm{marked}}(\kappa)$ assigns a slope $\Im(\gamma(M'))/\Re(\gamma(M'))$ to every $\gamma \in E(\Delta)$. If $\mathbf z = \Phi(M')$ satisfies the veering inequalities for $M$, then the numerator and denominator of the slope assigned to $\gamma$ by $M'$ have the same sign as those assigned by $M$. In particular, $\gamma$ has the same slope-sign on $M'$ as on $M$.
\end{rem}

Let $\mathrm{Quad}(M)$ denote the collection of quadrilaterals $Q$ formed by two adjacent triangles of $\Delta$ such that $M$ assigns to the sides of $Q$ alternating slope-signs. By Remark \ref{slope-sign equality}, we see that $\mathrm{Quad}(M)$ is entirely determined by $v(M)$. By Theorem 4.2 of \cite{frankel-cat}, a veering triangulation $\Delta$ of a translation surface $M$ is $L^\infty$-Delaunay if and only if, for each $Q \in \mathrm{Quad}(M)$, the edge $\gamma_Q$ that the triangles share is $L^\infty$-shorter than the other diagonal $\Gamma_Q$ of $Q$. Therefore, for each such $Q \in \mathrm{Quad}(M)$, we have the inequality $\|\gamma_Q(M)\|_\infty < \|\Gamma_Q(M)\|_\infty$.

We now show that, in the presence of the veering inequalities, for each $Q \in \mathrm{Quad}(M)$, the inequality $\|z_{\gamma_Q}\|_\infty < \|z_{\Gamma_Q}\|_\infty$ is equivalent to a pair of $\R$-linear inequalities in the real and imaginary parts of the components of $\mathbf z$. Lemma \ref{quadrilateral inequality lemma} shows this in the case where $\gamma_Q$ has positive slope, and by $90^\circ$ rotation, the analogous result holds when $\gamma_Q$ has negative slope.

\begin{lem}\label{quadrilateral inequality lemma}
Let $Q \in \mathrm{Quad}(M)$. Suppose $\mathbf z \in \C^{2g+n-1}$ satisfies the veering inequalities for $M$, and let $Q$ have negatively-sloped edges $a$ and $c$, and positively-sloped edges $b$ and $d$, with edge orientations as in Figure \ref{quad inequal picture}. Assume that ${\gamma_Q}$ has positive slope and has common endpoints with $a$ and $b$. Then
\[
\|z_{\gamma_Q}\|_\infty < \|z_{\Gamma_Q}\|_\infty \iff \Re(z_{\gamma_Q}) < \Im(z_{\Gamma_Q}) \text{\emph{ and }} \Im(z_b) < \Im(z_{\Gamma_Q}).
\]
\end{lem}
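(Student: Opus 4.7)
The plan is to exploit the fact that the veering inequalities fix the sign of every coordinate in sight, so that each $\|\cdot\|_\infty$ reduces to a maximum of two linear expressions. Reading off signs from the hypotheses and the orientations in Figure~\ref{quad inequal picture}, one has $\Re(z_{\gamma_Q}), \Im(z_{\gamma_Q}), \Re(z_b), \Im(z_b), \Im(z_c), \Im(z_{\Gamma_Q}) > 0$ and $\Re(z_{\Gamma_Q}), \Im(z_a), \Re(z_d) < 0$, while the cellular structure of $Q$ supplies the homological identities $z_{\gamma_Q} = z_a + z_b$, $z_{\Gamma_Q} = z_b + z_c$, and the closure relation $z_a + z_b + z_c + z_d = 0$.

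The heart of the argument is the bound $|\Re(z_{\Gamma_Q})| < \Re(z_{\gamma_Q})$. Indeed, $\Re(z_d) < 0$ combined with the closure relation gives $\Re(z_a) + \Re(z_b) + \Re(z_c) > 0$, so $\Re(z_{\gamma_Q}) = \Re(z_a) + \Re(z_b) > -\Re(z_c)$; and since $\Re(z_{\Gamma_Q}) = \Re(z_b) + \Re(z_c) < 0$ forces $-\Re(z_c) > \Re(z_b) > 0$, one has $|\Re(z_{\Gamma_Q})| = -\Re(z_c) - \Re(z_b) < -\Re(z_c) < \Re(z_{\gamma_Q})$. I expect this to be the main obstacle, both because it is the only step that genuinely uses all four edges of $Q$ and because it requires careful bookkeeping against the chosen orientations.

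For the forward direction, this bound combined with $\Re(z_{\gamma_Q}) \leq \|z_{\gamma_Q}\|_\infty < \|z_{\Gamma_Q}\|_\infty$ rules out $|\Re(z_{\Gamma_Q})|$ as the term realizing $\|z_{\Gamma_Q}\|_\infty$, forcing $\|z_{\Gamma_Q}\|_\infty = \Im(z_{\Gamma_Q})$; the first listed inequality $\Re(z_{\gamma_Q}) < \Im(z_{\Gamma_Q})$ is then immediate, and the second follows from $\Im(z_{\Gamma_Q}) = \Im(z_b) + \Im(z_c)$ together with $\Im(z_c) > 0$. For the reverse direction, assume both listed inequalities; since $\Im(z_a) < 0$, one has $\Im(z_{\gamma_Q}) = \Im(z_a) + \Im(z_b) < \Im(z_b) < \Im(z_{\Gamma_Q})$, and combined with $\Re(z_{\gamma_Q}) < \Im(z_{\Gamma_Q})$ this yields $\|z_{\gamma_Q}\|_\infty < \Im(z_{\Gamma_Q}) \leq \|z_{\Gamma_Q}\|_\infty$.
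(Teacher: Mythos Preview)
Your purely algebraic route differs from the paper's, which argues geometrically via the circumsquare $S$ of the triangle on $z_a$, $z_b$, $z_{\gamma_Q}$, splitting into the cases $\Re(z_{\gamma_Q}) \le \Im(z_b)$ (where both sides of the equivalence hold automatically because the fourth vertex is forced above $S$) and $\Re(z_{\gamma_Q}) > \Im(z_b)$ (where $\|z_{\gamma_Q}\|_\infty = \Re(z_{\gamma_Q})$ and the equivalence is checked directly). You replace this case split with the single bound $|\Re(z_{\Gamma_Q})| < \Re(z_{\gamma_Q})$, which is tidier and also makes transparent that the second inequality $\Im(z_b) < \Im(z_{\Gamma_Q})$ is already a consequence of the veering inequalities alone (via $\Im(z_c) > 0$), something the paper's proof leaves implicit.

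There is, however, a real gap in your derivation of that key bound: you assert $\Re(z_{\Gamma_Q}) < 0$, but $\Gamma_Q$ is \emph{not} an edge of $\Delta$ and hence is not constrained by the veering inequalities, so $\Re(z_{\Gamma_Q}) = \Re(z_b) + \Re(z_c)$ (with $\Re(z_b) > 0$ and $\Re(z_c) < 0$) can have either sign. For instance $z_a = (2,-3)$, $z_b = (2,6)$, $z_c = (-1,1)$, $z_d = (-3,-4)$ satisfy all of your listed sign conditions on $a,b,c,d,\gamma_Q$ and the closure relation, yet give $\Re(z_{\Gamma_Q}) = 1 > 0$. Your chain of inequalities handles only the case $\Re(z_{\Gamma_Q}) < 0$. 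The missing case is in fact easier: if $\Re(z_{\Gamma_Q}) \ge 0$ then $|\Re(z_{\Gamma_Q})| = \Re(z_b) + \Re(z_c) < \Re(z_b) < \Re(z_a) + \Re(z_b) = \Re(z_{\gamma_Q})$, using only $\Re(z_c) < 0 < \Re(z_a)$. With this one line added, your proof is complete.
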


\begin{figure}
\begin{center}
\includegraphics[scale=0.5]{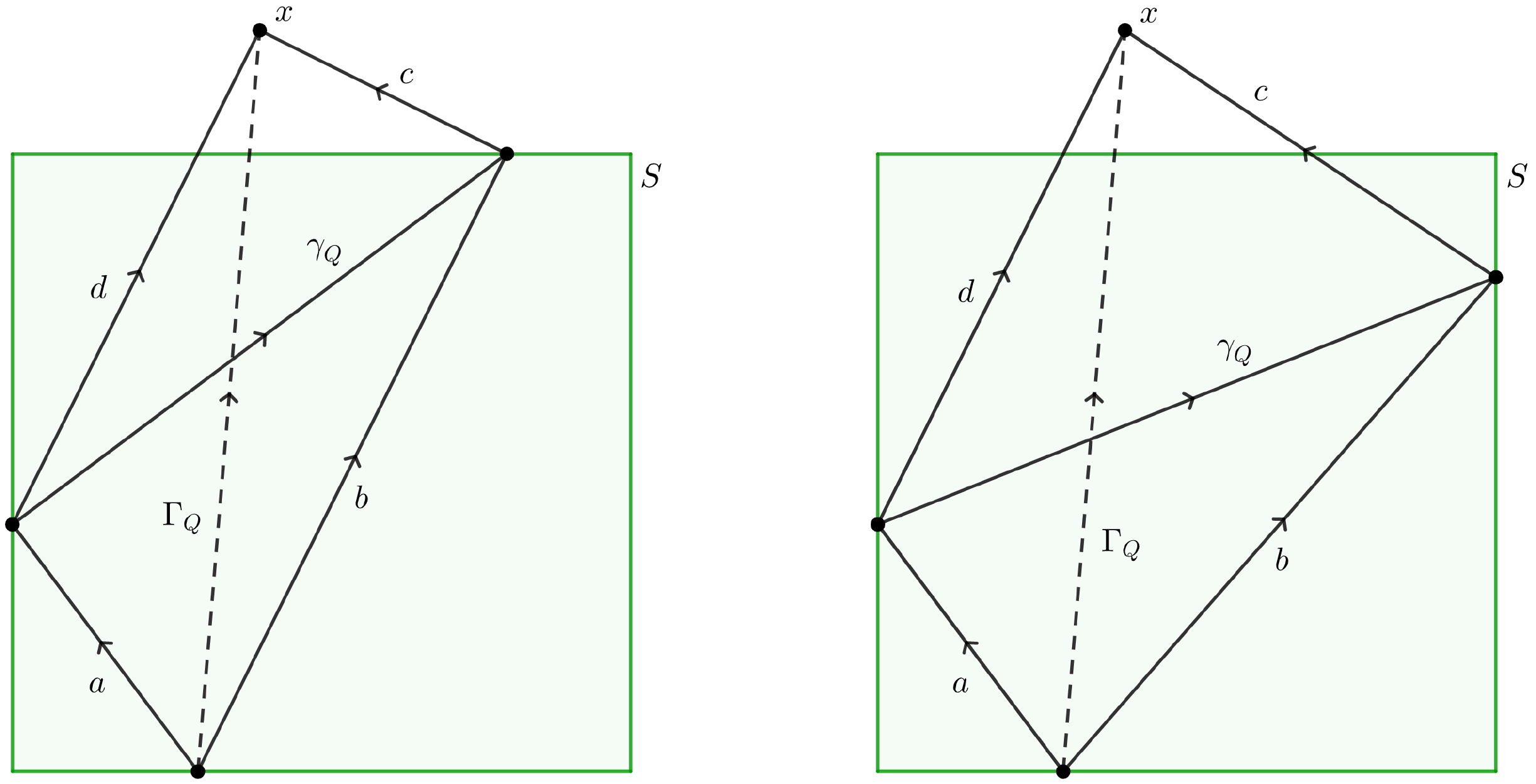}
\end{center}
\caption{Case 1 (left) and Case 2 (right) of Lemma \ref{quadrilateral inequality lemma}}
\label{quad inequal picture}
\end{figure}

\begin{proof}
For convenience, let us consider the complex numbers $z$ as vectors in the plane. Let $T$ be the triangle formed by $z_a$, $z_b$, and $z_{\gamma_Q}$, and let $S$ be the square in which it is inscribed. In Figure \ref{quad inequal picture} we abuse notation by writing e.g. $\gamma_Q$ in place of $z_{\gamma_Q}$. Let $h = \mathrm{height}(S)$, and let $x$ be the common endpoint of $z_{\Gamma_Q}$ and $z_d$. We have two cases.
\smallskip

\noindent\textbf{Case 1:} $\Re(z_{\gamma_Q}) \leq \Im(z_b)$. In this case, $\Im(z_b) = h$, and the vertices of $T$ lie on the left, bottom, and top sides of $S$, as shown in Figure \ref{quad inequal picture}. Since $d$ has positive slope and $c$ has negative slope, $x$ must lie above $S$, and hence we immediately have both $\Im(z_{\Gamma_Q}) > h = \Im(z_b) \geq \Re(z_{\gamma_Q})$ and $\|z_{\Gamma_Q}\|_\infty > h \geq \|z_{\gamma_Q}\|_\infty \geq \Re(z_{\gamma_Q})$.
\smallskip

\noindent\textbf{Case 2:} $\Re(z_{\gamma_Q}) > \Im(z_b)$. In this case, $\|z_{\gamma_Q}\|_\infty = \Re(z_{\gamma_Q}) = h$, and the vertices of $T$ lie on the left, bottom, and right sides of $S$, as shown in Figure \ref{quad inequal picture}. Suppose first that $\|z_{\Gamma_Q}\|_\infty > \|z_{\gamma_Q}\|_\infty = h$. Since $d$ has positive slope and $c$ has negative slope, $x$ must lie horizontally between the left and right sides of $S$. Therefore $\Re(z_{\Gamma_Q}) < h$ and $\Im(z_{\Gamma_Q}) > h$, so that $\|z_{\Gamma_Q}\|_\infty = \Im(z_{\Gamma_Q})$. We conclude $\Im(z_{\Gamma_Q}) = \|z_{\Gamma_Q}\|_\infty > \|z_{\gamma_Q}\|_\infty = \Re(z_{\gamma_Q}) > \Im(z_b)$.

Now suppose that $\Im(z_{\Gamma_Q}) > \Re(z_{\gamma_Q})$. Since $\|z_{\Gamma_Q}\|_\infty \geq \Im(z_{\Gamma_Q})$, we have $\|z_{\Gamma_Q}\|_\infty \geq \Im(z_{\Gamma_Q}) > \Re(z_{\gamma_Q}) = \|z_{\gamma_Q}\|_\infty$, and so we are done.
\end{proof}

\begin{defn}\label{quadrilateral conditions}
We will call the $\R$-linear inequalities given by each $Q \in \mathrm{Quad}(M)$ due to Lemma \ref{quadrilateral inequality lemma} the \emph{quadrilateral inequalities} for $M$.
\end{defn}

\begin{defn}
We have seen that $v(M)$ determines the veering and quadrilateral inequalities for $M$. Conversely, we say that a pair $(\Delta, v)$ of a triangulation $\Delta$ of $S_g$ and a vector $v \in \{-1,1\}^{2|E(\Delta)|}$ is a pair of \emph{Delaunay data} if there exists some $M \in \mathcal G_{\mathrm{marked}}(\kappa)$ such that $\Delta$ is the $L^\infty$-Delaunay triangulation of $M$ and $v = v(M)$.
\end{defn}

\begin{defn}
We define the \emph{polytope of complex parameters} for some Delaunay data $(\Delta, v)$ to be
\[
\mathcal P^\circ(\Delta, v) \coloneqq \left\{\mathbf z \in \C^{2g+n-1} \mst  \begin{array}{c} \mathbf z \text{ satisfies the veering and} \\ \text{quadrilateral inequalities given by }v \end{array} \right\}.
\]
\end{defn}

\begin{defn}[Local inverse to $\Phi$]\label{local inverse defn}
Let $\mathbf z = (z_1,\dots,z_{2g+n-1}) \in \mathcal P^\circ({\Delta, v})$. We cut $S_g$ into the triangles determined by $\Delta$, and then assign to each $\gamma \in E(\Delta)$ a side-length and angle given by the modulus and phase of $z_\gamma$. Hence we obtain a collection of Euclidean triangles, which we glue back together, endowing $(S_g,\Sigma)$ with the structure $s_{\Delta, v}(\mathbf z) \in \mathcal H_{\mathrm{marked}}(\kappa)$ of a translation surface, triangulated by saddle connections. By this construction, we see this triangulation is equal to $\Delta$ on the underlying topological surface $S_g$ of $s_{\Delta, v}(\mathbf z)$. The veering inequalities ensure that the resulting triangulation is veering, and together with the quadrilateral inequalities, they ensure that the triangulation is the $L^\infty$-Delaunay triangulation of $s_{\Delta, v}(\mathbf z)$ by Theorem 4.2 of \cite{frankel-cat} and Lemma \ref{quadrilateral inequality lemma} above. We have thus defined $s_{\Delta, v}: \mathcal P^\circ({\Delta, v}) \to \mathcal G_{\mathrm{marked}}(\kappa)$.

It is clear that $\Phi(s_{\Delta, v}(\mathbf z)) = \mathbf z$. Furthermore, when $(\Delta, v)$ are the Delaunay data for $M \in \mathcal G_{\mathrm{marked}}(\kappa)$, we have $s_{\Delta, v}(\Phi(M)) = M$. The map $s_{\Delta, v}$ is therefore a local inverse to $\Phi$ on $\mathcal P^\circ(\Delta, v)$.
\end{defn}

\begin{rem}\label{polytope in isodelaunay remark}
By the connectedness of $\mathcal P^\circ({\Delta, v})$, we have $s_{\Delta, v}(\mathcal P^\circ({\Delta, v})) \subseteq \mathcal D^\circ$ for some isodelaunay region $\mathcal D^\circ$.
\end{rem}

In the remainder of this section, we will establish Theorem \ref{polytope theorem} by showing that in fact we have equality $s_{\Delta, v}(\mathcal P^\circ({\Delta, v})) = \mathcal D^\circ$. This may be understood as saying that the veering and quadrilateral inequalities for $M$ describe the shape of the connected component of $\mathcal G_{\mathrm{marked}}(\kappa)$ containing $M$.

\subsubsection*{Delaunay limit triangulations}

Let $(\Delta, v)$ be Delaunay data. The construction of the local inverse $s_{\Delta, v}: \mathcal P^\circ({\Delta, v}) \to \mathcal G_{\mathrm{marked}}(\kappa)$ depends on gluing together Euclidean triangles built from an input vector $\mathbf z$. Such a gluing construction degenerates if one of the Euclidean triangles built from $\mathbf z$ has area 0. However, Lemma \ref{structure of T(M)} guarantees that we may still extend this construction in every such case, except when an edge of $\Delta$ has vanishing period.

\begin{defn}
Let
\[
\mathcal Z_0(\Delta) \coloneqq \{\mathbf z \in \C^{2g+n-1} \st z_\gamma=0 \text{ for some }\gamma \in E(\Delta)\}.
\]

We then define
\[
\mathcal P(\Delta, v) \coloneqq \overline{\mathcal P^\circ(\Delta, v)} \setminus \mathcal Z_0(\Delta).
\]
\end{defn}

\begin{lem}\label{structure of T(M)}
For all Delaunay data $(\Delta, v)$, the function $s_{\Delta, v}$ extends to a proper map $\mathcal P({\Delta, v}) \to \mathcal H_{\mathrm{marked}}(\kappa)$. In particular, $\mathcal P({\Delta, v})$ is the largest possible subset of $\overline{\mathcal P^\circ({\Delta, v})}$ to which the function $s_{\Delta, v}: \mathcal P^\circ({\Delta, v}) \to \mathcal G_{\mathrm{marked}}(\kappa)$ extends.
\end{lem}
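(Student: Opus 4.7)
The plan is to extend $s_{\Delta,v}$ via the same triangle-gluing recipe of Definition \ref{local inverse defn}, and then verify in turn that this extension is continuous, proper, and maximal.

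For the extension, take $\mathbf z \in \mathcal P(\Delta,v)$. Each edge $\gamma \in E(\Delta)$ is assigned a nonzero vector $z_\gamma \in \C$ since $\mathbf z \notin \mathcal Z_0(\Delta)$, while the veering and quadrilateral inequalities of $v$ may now hold only weakly. I would run the same combinatorial gluing as before; the only new phenomenon is that a triangle of $\Delta$ may become degenerate of area zero, which occurs precisely when its three edge vectors are collinear in $\C$, in which case the middle vertex of the collinear triple comes to lie in the interior of the longest edge. The gluing data still specify a well-defined translation surface structure $s_{\Delta,v}(\mathbf z) \in \mathcal H_{\mathrm{marked}}(\kappa)$: the triangulation $\Delta$ may simply cease to be a geometric triangulation (so that $s_{\Delta,v}(\mathbf z)$ leaves $\mathcal G_{\mathrm{marked}}(\kappa)$), but because every $z_\gamma \neq 0$, no edge collapses, no marked points collide, and the prescribed cone-angle data $\kappa$ is preserved. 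Continuity of the extension, and the identity $\Phi \circ s_{\Delta,v} = \mathrm{id}$, then follow from continuity of the gluing in $\mathbf z$.

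The step I expect to be the main obstacle is properness, since this is really what rules out the points of $\mathcal Z_0(\Delta)$. Suppose $(\mathbf z_n) \subset \mathcal P(\Delta,v)$ and $s_{\Delta,v}(\mathbf z_n) \to M \in \mathcal H_{\mathrm{marked}}(\kappa)$. Continuity of $\Phi$ gives $\mathbf z_n = \Phi(s_{\Delta,v}(\mathbf z_n)) \to \Phi(M) \in \overline{\mathcal P^\circ(\Delta,v)}$, so it suffices to rule out $\Phi(M) \in \mathcal Z_0(\Delta)$. Suppose instead $z_{\gamma,n} \to 0$ for some $\gamma \in E(\Delta)$: then on each $s_{\Delta,v}(\mathbf z_n)$ the edge $\gamma$ is realized as a saddle connection of length $|z_{\gamma,n}|$, and hence the systole satisfies $\mathrm{sys}(s_{\Delta,v}(\mathbf z_n)) \leq |z_{\gamma,n}| \to 0$. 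But the systole is lower semi-continuous on $\mathcal H_{\mathrm{marked}}(\kappa)$, so $\liminf_n \mathrm{sys}(s_{\Delta,v}(\mathbf z_n)) \geq \mathrm{sys}(M) > 0$, a contradiction. Therefore $\Phi(M) \in \mathcal P(\Delta,v)$ and $s_{\Delta,v}$ is proper.

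Maximality is then essentially a consequence of the same systole argument. Any continuous extension of $s_{\Delta,v}$ to a point $\mathbf z_0 \in \overline{\mathcal P^\circ(\Delta,v)} \cap \mathcal Z_0(\Delta)$ would, by $\Phi \circ s_{\Delta,v} = \mathrm{id}$, produce a surface $M_0 \in \mathcal H_{\mathrm{marked}}(\kappa)$; but for any $(\mathbf z_n) \subset \mathcal P^\circ(\Delta,v)$ with $\mathbf z_n \to \mathbf z_0$ we would have $\mathrm{sys}(s_{\Delta,v}(\mathbf z_n)) \to 0$ while $s_{\Delta,v}(\mathbf z_n) \to M_0$, contradicting the positive systole of $M_0$. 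Thus $\mathcal P(\Delta,v)$ is the largest possible domain of extension in $\overline{\mathcal P^\circ(\Delta,v)}$.
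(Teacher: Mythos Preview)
Your properness and maximality arguments are sound and essentially coincide with the paper's: both use that if an edge period tends to zero then the systole tends to zero, which precludes convergence in $\mathcal H_{\mathrm{marked}}(\kappa)$ (you phrase this as lower semi-continuity of the systole; the paper invokes Masur's Compactness Criterion).

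Where your argument diverges from the paper is the extension step, and there you have a genuine gap. You assert that the gluing recipe of Definition~\ref{local inverse defn} still produces a well-defined element of $\mathcal H_{\mathrm{marked}}(\kappa)$ when some triangles have degenerated to area zero, but this is not obvious and you do not justify it. When a triangle collapses, the ``gluing'' is no longer the usual identification of Euclidean triangles along edges: the degenerate triangle is a segment, and one must argue carefully that identifying its two sides with the neighboring triangles yields a translation surface with the prescribed cone angles at every vertex, that the marking extends continuously, and that nothing worse happens when several adjacent triangles degenerate simultaneously. These verifications are doable but nontrivial, and your sentence ``because every $z_\gamma \ne 0$, no edge collapses, no marked points collide, and the prescribed cone-angle data $\kappa$ is preserved'' does not address them.

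The paper sidesteps this entirely. Rather than constructing $s_{\Delta,v}(\mathbf z)$ directly for boundary $\mathbf z$, it approaches $\mathbf z$ by a sequence $M_k$ from the interior, observes that the edge periods are bounded below, and then invokes Lemma~\ref{L^infty systole} to conclude that \emph{every} saddle connection on $M_k$ is $L^\infty$-bounded below. Masur's Compactness Criterion then gives a limit $M \in \mathcal H_{\mathrm{marked}}(\kappa)$, and period coordinates identify it uniquely. This approach never needs to make sense of gluing degenerate triangles; the price is the extra input of Lemma~\ref{L^infty systole}, which your argument does not use.
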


\begin{proof}
We first show that $s_{\Delta, v}$ cannot extend continuously to any point of $\mathcal Z_0(\Delta)$. Every point $\mathbf z \in \overline{\mathcal P^\circ(\Delta, v)} \cap \mathcal Z_0(\Delta)$ is a limit of points $\{\Phi(M_k)\}_{k=1}^\infty$ where the $M_k$ are translation surfaces on which $\gamma$ is a saddle connection with $\|\gamma(M_k)\|_\infty < k\1$. By Masur's Compactness Criterion, it follows that the sequence $\{M_k\}_{k=1}^\infty$ has no limit in $\mathcal H_{\mathrm{marked}}(\kappa)$, and so $s_{\Delta, v}$ cannot extend to $\mathbf z$ continuously.

Now suppose $\mathbf z \in \mathcal P({\Delta, v})$. Then there is some $\e > 0$ such that for every sequence $\{M_k\}_{k=1}^\infty$ of translation surfaces with $\Phi(M_k) \to \mathbf z$, we have $\|\gamma(M_k)\|_\infty \geq \e$ for each $\gamma \in E(\Delta)$. By Lemma \ref{L^infty systole}, we have $\|\gamma(M_k)\|_\infty \geq \e$ for \emph{every} saddle connection $\gamma$ on $M_k$. By Masur's Compactness Criterion, the sequence $\{M_k\}_{k=1}^\infty$ has a limit point $M \in \mathcal H_{\mathrm{marked}}(\kappa)$.

By considering a local coordinate chart for $\Phi$ about $M$, we see that from $\Phi(M_k) \to \mathbf z$ it follows that $M$ is the unique limit point of $\{M_k\}_{k=1}^\infty$ and that $\Phi(M) = \mathbf z$. It then follows that the unique continuous extension of $s_{\Delta, v}$ to $\mathbf z$ is $s_{\Delta, v}(\mathbf z) = M$.

We conclude that $s_{\Delta, v}$ extends to all of $\mathcal P({\Delta, v})$. Since $s_{\Delta, v}$ extends to no point of $\mathcal Z_0(\Delta)$, we also conclude that this extension is proper.
\end{proof}

\begin{lem}\label{limiting circumsquare}
Each triangle $T$ of $\Delta$ has a circumsquare on every translation surface $M \in s_{\Delta, v}(\mathcal P({\Delta, v}))$.
\end{lem}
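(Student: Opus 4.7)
The plan is to obtain the circumsquare of $T$ on $M$ as a limit of circumsquares from nearby generic translation surfaces. Given $\mathbf z \in \mathcal P(\Delta, v)$ with $M = s_{\Delta, v}(\mathbf z)$, I would choose a sequence $\mathbf z_k \in \mathcal P^\circ(\Delta, v)$ converging to $\mathbf z$ and set $M_k = s_{\Delta, v}(\mathbf z_k)$. Continuity of $s_{\Delta, v}$ via Lemma \ref{structure of T(M)} gives $M_k \to M$ in $\mathcal H_{\mathrm{marked}}(\kappa)$. Each $M_k$ is $L^\infty$-generic with $\Delta$ as its Delaunay triangulation, so $T$ has a circumsquare $S_k : (0, h_k)^2 \to M_k$.

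Following Remark \ref{lifting remark}, I would lift everything to universal covers and work in the developing plane. Fix a topological lift $\Tilde T \subset \Tilde{S_g}$ of $T$ based at a vertex $\Tilde v_0$; this singles out a lift $\Tilde S_k \subset \Tilde{M_k}$ of $S_k$ circumscribing $\Tilde T$. The developing map from $\Tilde v_0$ sends the vertices of $\Tilde T$ to points in $\R^2$ whose positions are $\Z$-linear combinations of the entries of $\Phi(M_k)$, and hence converge. The uniquely-determined circumscribing squares $d_k(\Tilde S_k) \subset \R^2$ therefore converge to a limit square $\square \subset \R^2$ with positive side length $h = \lim h_k > 0$, the positivity coming from the non-vanishing of edge periods on $\mathcal P(\Delta, v)$. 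Pulling back along the developing map yields a candidate parametrization $\Tilde S : (0, h)^2 \to \Tilde M$ circumscribing $\Tilde T$.

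The main step, which I expect to be the principal obstacle, is to verify that $\Tilde S$ is an actual $L^\infty$-square, i.e., that no singularity of $\Tilde M$ lies in its interior. Suppose toward contradiction that $\Tilde p \in \Tilde \Sigma$ lies in the interior of $\overline{\Tilde S}$. In the developing plane we have $d_k(\Tilde p) \to d(\Tilde p)$ and $d_k(\Tilde S_k) \to \square$, so $d_k(\Tilde p)$ lies in the interior of $d_k(\Tilde S_k)$ for all sufficiently large $k$; since the developing map is a local isometry, this means $\Tilde p$ lies in the interior of $\Tilde S_k$ for large $k$. But $T$ is $L^\infty$-Delaunay on $M_k$, so by condition (\ref{local delaunay condition}) of Definition \ref{linfty delaunay triangle defn}, the only singularities in $\overline{\Tilde S_k}$ are the three vertices of $\Tilde T$, which lie on its boundary rather than in the interior. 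This contradicts $\Tilde p$ being an interior singularity distinct from those vertices. Hence $\Tilde S$ has no interior singularities and descends to an $L^\infty$-square $S : (0, h)^2 \to M$ circumscribing $T$. The subtlety requiring care is that in the boundary cases where a veering or quadrilateral inequality for $M$ becomes an equality, additional singularities of $M$ may meet the boundary of $\Tilde S$, but the continuity argument shows that none can be pushed into the interior.
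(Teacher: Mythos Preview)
Your proposal is correct and is precisely the ``routine exercise in taking limits'' that the paper leaves to the reader; the paper offers no detailed argument, only the illustration in Figure~\ref{delaunay degeneration} showing how circumsquares persist even as the inscribed triangle degenerates to zero area. Your use of the developing picture and the convergence of singularity positions to rule out interior singularities in the limit square is the natural way to make this rigorous.
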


The proof is a routine exercise in taking limits. We illustrate Lemma \ref{limiting circumsquare} as follows. Let $M = s_{\Delta, v}(\mathbf z)$. If every triangle built from $\mathbf z$ has nonzero area, then it is straightforward to construct the respective circumsquare. If some triangle has area $0$, then its edges are realized by three collinear line segments. See Figure \ref{delaunay degeneration} for a sketch of how such a configuration may arise from a limit of $L^\infty$-generic surfaces. Note that the top edge of the depicted triangle tends towards a geodesic path that fails to be a saddle connection.

\begin{figure}
\begin{center}
\includegraphics[scale=0.5]{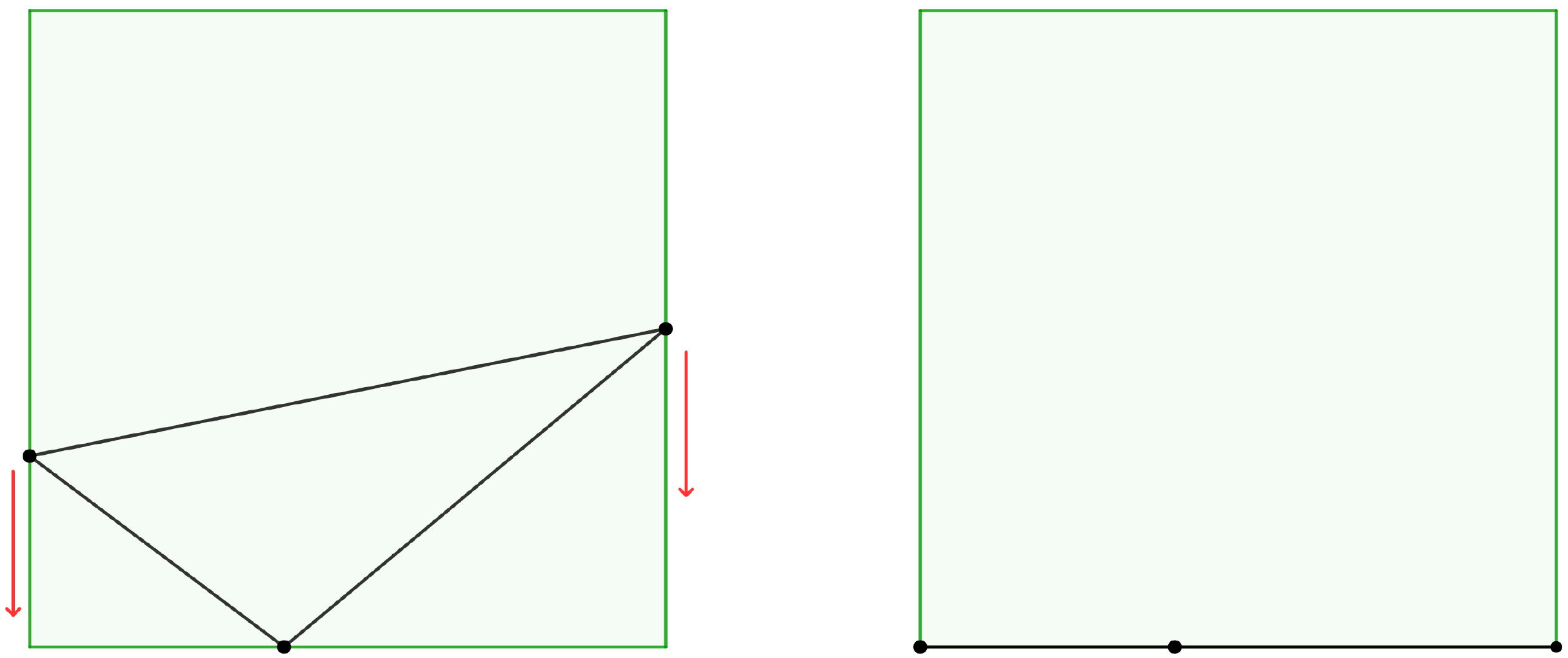}
\end{center}
\caption{$L^\infty$-Delaunay triangles tending towards a triangle of zero area}
\label{delaunay degeneration}
\end{figure}

\begin{defn}\label{delaunay limit triangulation}
Let $M \in \mathcal H_{\mathrm{marked}}(\kappa)$. Suppose $(\Delta, v)$ are Delaunay data such that there exists $\mathbf z \in \mathcal P({\Delta, v})$ with $s_{\Delta, v}(\mathbf z) = M$. Then we say that $\Delta$ is a \emph{Delaunay limit triangulation} of $M$.
\end{defn}

We are now ready to prove Theorem \ref{polytope theorem}.

\begin{proof}[Proof of Theorem \ref{polytope theorem}]
Let $\mathcal D^\circ \subset \mathcal H_{\mathrm{marked}}(\kappa)$ be an isodelaunay region. By Remark \ref{polytope in isodelaunay remark}, we have $s_{\Delta, v}(\mathcal P^\circ({\Delta, v})) \subseteq \mathcal D^\circ$, where $({\Delta, v})$ are Delaunay data for some translation surface in $\mathcal D^\circ$. We will show that $s_{\Delta,v}(\mathbf z) \notin \mathcal G_{\mathrm{marked}}(\kappa)$ for every $\mathbf z \in \mathcal P({\Delta, v}) \setminus \mathcal P^\circ(\Delta, v)$. By Lemma \ref{structure of T(M)}, it then follows that $s_{\Delta, v}\left(\mathcal P^\circ({\Delta, v})\right)$ is a maximal connected subset of $\mathcal D^\circ$. Since $\mathcal D^\circ$ is, by definition, a connected component of $\mathcal G_{\mathrm{marked}}(\kappa)$, we have $s_{\Delta, v}\left(\mathcal P^\circ({\Delta, v})\right) = \mathcal D^\circ$. Since $s_{\Delta, v}$ is the local inverse to $\Phi$, we then have $\mathcal P^\circ(\Delta, v) = \Phi(\mathcal D^\circ)$, and so we are done.

It remains to show that $M = s_{\Delta,v}(\mathbf z) \notin \mathcal G_{\mathrm{marked}}(\kappa)$ for every $\mathbf z \in \mathcal P({\Delta, v}) \setminus \mathcal P^\circ(\Delta, v)$.  If $\mathbf z$ fails one of the veering inequalities for $\gamma \in E(\Delta)$, then $\gamma$ is a vertical or horizontal geodesic path on $M$. By Lemma \ref{limiting circumsquare}, this path is inscribed in a maximal $L^\infty$-square on $M$, violating condition (\ref{veering condition}) of Definition \ref{delaunay defn}.

If $\mathbf z$ satisfies the veering inequalities and fails the quadrilateral inequalities for some quadrilateral $Q$, then by Lemma \ref{quadrilateral inequality lemma}, $Q$ must have the same height and width on $M$. Again Lemma \ref{limiting circumsquare} guarantees that $Q$ is inscribed in a maximal $L^\infty$-square on $M$, violating condition (\ref{delaunay condition}) of Definition \ref{delaunay defn}. In every case, we have $M \notin \mathcal G_{\mathrm{marked}}(\kappa)$, and so we are done.
\end{proof}

\begin{defn}
Let $(\Delta, v)$ be Delaunay data. Then we define $\mathcal D^\circ({\Delta, v}) \coloneqq s_{\Delta, v}(\mathcal P^\circ({\Delta, v}))$ and $\mathcal D(\Delta, v) \coloneqq \overline{\mathcal D^\circ(\Delta, v)}$. By Theorem \ref{polytope theorem}, every isodelaunay region of $\mathcal H_{\mathrm{marked}}(\kappa)$ is of the form $\mathcal D^\circ(\Delta, v)$.
\end{defn}

\section{The $L^\infty$-isodelaunay decomposition}\label{decomposition section}

\begin{defn}
Let $\mathscr I(\kappa)$ denote the set of all Delaunay data $(\Delta, v)$. Since $\mathcal G_{\mathrm{marked}}(\kappa)$ is dense in $\mathcal H_{\mathrm{marked}}(\kappa)$, and the isodelaunay regions are its connected components, it follows that the $L^\infty$-\emph{isodelaunay decomposition} $\mathscr D(\kappa) \coloneqq \{\mathcal D({\Delta, v})\}_{(\Delta, v) \in \mathscr I(\kappa)}$ is a covering of $\mathcal H_{\mathrm{marked}}(\kappa)$ by closed sets. We write succinctly $\mathcal D_\sigma \coloneqq \bigcap_{(\Delta, v) \in \sigma} \mathcal D(\Delta, v)$ for every $\sigma \subset \mathscr I(\kappa)$.
\end{defn}

We introduce the following structural facts about the $L^\infty$-isodelaunay decomposition, which will be useful in the proof of Lemma \ref{nerve hypotheses}.

\begin{lem}\label{full closure region}
For each $(\Delta, v) \in \mathscr I(\kappa)$, the period coordinate map $\Phi: \mathcal D(\Delta, v) \to \mathcal P(\Delta, v)$ is a homeomorphism.
\end{lem}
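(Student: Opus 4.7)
The plan is to leverage the extension of the local inverse $s_{\Delta,v}$ from Lemma \ref{structure of T(M)} and show that $s_{\Delta,v}$ and $\Phi$ are two-sided inverses on the relevant domains.

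First I would record the building blocks already proven. By Theorem \ref{polytope theorem} we have a homeomorphism $\Phi:\mathcal D^\circ(\Delta,v) \to \mathcal P^\circ(\Delta,v)$ with inverse $s_{\Delta,v}$, and by Lemma \ref{structure of T(M)} the map $s_{\Delta,v}$ extends to a continuous proper map $s_{\Delta,v}:\mathcal P(\Delta,v) \to \mathcal H_{\mathrm{marked}}(\kappa)$. The period coordinate map $\Phi$ is globally continuous on $\mathcal H_{\mathrm{marked}}(\kappa)$. The identity $\Phi \circ s_{\Delta,v} = \mathrm{id}$ holds on the dense subset $\mathcal P^\circ(\Delta,v)$ of $\mathcal P(\Delta,v)$, hence by continuity on all of $\mathcal P(\Delta,v)$. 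This immediately gives injectivity of $s_{\Delta,v}$.

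Next I would identify the image. Since $s_{\Delta,v}$ is a proper map between locally compact Hausdorff spaces, $s_{\Delta,v}(\mathcal P(\Delta,v))$ is closed in $\mathcal H_{\mathrm{marked}}(\kappa)$. It contains $s_{\Delta,v}(\mathcal P^\circ(\Delta,v)) = \mathcal D^\circ(\Delta,v)$, and therefore contains $\mathcal D(\Delta,v) = \overline{\mathcal D^\circ(\Delta,v)}$. For the reverse inclusion, every $\mathbf z \in \mathcal P(\Delta,v)$ lies in $\overline{\mathcal P^\circ(\Delta,v)}$, so we can pick a sequence $\mathbf z_k \in \mathcal P^\circ(\Delta,v)$ converging to $\mathbf z$; continuity of $s_{\Delta,v}$ yields $s_{\Delta,v}(\mathbf z_k) \to s_{\Delta,v}(\mathbf z)$, and the sequence lies in $\mathcal D^\circ(\Delta,v)$, placing the limit in $\mathcal D(\Delta,v)$. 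Hence $s_{\Delta,v}(\mathcal P(\Delta,v)) = \mathcal D(\Delta,v)$.

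Combining these observations, $s_{\Delta,v}:\mathcal P(\Delta,v) \to \mathcal D(\Delta,v)$ is a continuous bijection, and the equation $\Phi \circ s_{\Delta,v} = \mathrm{id}_{\mathcal P(\Delta,v)}$ together with surjectivity of $s_{\Delta,v}$ forces $\Phi|_{\mathcal D(\Delta,v)} = s_{\Delta,v}^{-1}$. Since $\Phi|_{\mathcal D(\Delta,v)}$ is continuous (being the restriction of the globally continuous period coordinate map), this identification shows that both $\Phi|_{\mathcal D(\Delta,v)}$ and its inverse $s_{\Delta,v}$ are continuous, giving the desired homeomorphism.

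The only genuinely nontrivial step here is the identification of the image, and the work is really already done by Lemma \ref{structure of T(M)}: properness is what promotes the ``image is closed and contains $\mathcal D^\circ$'' half to the full equality, while the ``$\mathcal P(\Delta,v) \subseteq \overline{\mathcal P^\circ(\Delta,v)}$'' defining inclusion handles the other direction via a one-line continuity argument. So I expect the proof to be short, with no essentially new ideas beyond extracting bookkeeping from earlier results.
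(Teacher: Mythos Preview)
Your proposal is correct and follows essentially the same approach as the paper: both use that $\Phi \circ s_{\Delta,v} = \mathrm{id}$ gives injectivity, that properness of $s_{\Delta,v}$ (from Lemma~\ref{structure of T(M)}) forces the image to be closed and hence equal to $\mathcal D(\Delta,v)$, and that $\Phi$ serves as the continuous inverse. Your write-up is simply more explicit than the paper's about the two inclusions for the image and about why the inverse is continuous.
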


\begin{proof}
Since $\Phi$ is still clearly a left-inverse for $s_{\Delta, v}$ on $\mathcal P({\Delta, v})$, it follows that $s_{\Delta, v}: \mathcal P({\Delta, v}) \to \mathcal H_{\mathrm{marked}}(\kappa)$ is injective. By Theorem \ref{polytope theorem}, the image under $s_{\Delta, v}$ of the interior $\mathcal P^\circ({\Delta, v})$ of $\mathcal P({\Delta, v})$ is $\mathcal D^\circ({\Delta, v})$, and by Lemma \ref{structure of T(M)}, $s_{\Delta, v}$ is proper on $\mathcal P({\Delta, v})$. Therefore $s_{\Delta, v}$ takes $\mathcal P({\Delta, v})$ homeomorphically to $\mathcal D({\Delta, v})$. Since $s_{\Delta, v}$ is a local inverse of $\Phi$, we are done.
\end{proof}

\begin{lem}\label{convex intersections}
Every $\mathcal D_\sigma$, for $\sigma \subset \mathscr I(\kappa)$, is homeomorphic to a connected convex set via the period coordinate map $\Phi$.
\end{lem}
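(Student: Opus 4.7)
The plan is to show that $\Phi$ restricts to a homeomorphism from $\mathcal D_\sigma$ onto a convex subset of $\mathbb C^{2g+n-1}$; connectedness then follows automatically. Assume $\sigma \neq \emptyset$, and fix any $(\Delta_0, v_0) \in \sigma$. By Lemma \ref{full closure region}, $\Phi|_{\mathcal D(\Delta_0, v_0)}$ is a homeomorphism onto $\mathcal P(\Delta_0, v_0)$, so its further restriction to $\mathcal D_\sigma \subseteq \mathcal D(\Delta_0, v_0)$ is a homeomorphism onto $\Phi(\mathcal D_\sigma)$. Hence it remains only to identify $\Phi(\mathcal D_\sigma)$ and verify its convexity.

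Next, I would identify the image as
\[
\Phi(\mathcal D_\sigma) = \bigcap_{(\Delta, v) \in \sigma} \mathcal P(\Delta, v).
\]
The containment $\subseteq$ is immediate. For $\supseteq$, given $\mathbf z$ in the intersection, one sets $M \coloneqq s_{\Delta_0, v_0}(\mathbf z) \in \mathcal D(\Delta_0, v_0)$ and must check $M \in \mathcal D(\Delta, v)$ for each $(\Delta, v) \in \sigma$. This reduces to showing that $s_{\Delta_0, v_0}$ and $s_{\Delta, v}$ agree on the overlap $\mathcal P(\Delta_0, v_0) \cap \mathcal P(\Delta, v)$, which in turn follows from the facts that the open isodelaunay regions $\mathcal D^\circ(\Delta, v)$ are disjoint connected components of $\mathcal G_{\mathrm{marked}}(\kappa)$ and that $\Phi$ is a local homeomorphism, so any point $\mathbf z$ in the common boundary of two such regions forces both local inverses to produce the same translation surface by continuity from each side.

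Granted this identification, it then suffices to show each $\mathcal P(\Delta, v)$ is convex, since intersections of convex sets are convex and convex sets in $\mathbb C^{2g+n-1}$ are connected. By Theorem \ref{polytope theorem}, $\overline{\mathcal P^\circ(\Delta, v)}$ is a closed convex polytope, and $\mathcal P(\Delta, v) = \overline{\mathcal P^\circ(\Delta, v)} \setminus \mathcal Z_0(\Delta)$. Deleting the codimension-two zero loci a priori threatens convexity, but is rescued by the closed veering inequalities $v_{\mathrm{Re}\gamma}\,\mathrm{Re}(z_\gamma) \geq 0$ and $v_{\mathrm{Im}\gamma}\,\mathrm{Im}(z_\gamma) \geq 0$, which confine each period $z_\gamma$ (for $\gamma \in E(\Delta)$) to a closed quadrant $Q_\gamma \subset \mathbb C$ determined by the signs in $v$. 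For $\mathbf z_1, \mathbf z_2 \in \mathcal P(\Delta, v)$ and $t \in (0, 1)$, both the real and imaginary parts of $t(z_1)_\gamma + (1-t)(z_2)_\gamma$ are sums of two sign-aligned nonnegative (or two nonpositive) quantities that can simultaneously vanish only if $(z_1)_\gamma = (z_2)_\gamma = 0$, contradicting $\mathbf z_j \notin \mathcal Z_0(\Delta)$; at the endpoints $t \in \{0, 1\}$ the combination is one of the $\mathbf z_j$, which is again nonzero. The main obstacle I anticipate is the compatibility of the local inverses $s_{\Delta, v}$ across different Delaunay data needed for the identification of the image; the convexity itself is then a clean consequence of the quadrant structure imposed by $v$ and the elementary behavior of convex combinations on quadrants.
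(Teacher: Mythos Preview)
Your approach is the same as the paper's: identify $\Phi(\mathcal D_\sigma)$ with $\bigcap_{(\Delta,v)\in\sigma}\mathcal P(\Delta,v)$ and then verify convexity. Your quadrant argument for convexity of each $\mathcal P(\Delta,v)$ is exactly a restatement of the paper's observation that each $\{z_\gamma=0\}$ is the intersection of two bounding hyperplanes of the polytope cut out by the nonstrict veering inequalities, so removing it preserves convexity. You deserve credit for explicitly flagging the compatibility of the local inverses $s_{\Delta,v}$ as the real issue; the paper simply asserts the identification ``by Lemma~\ref{full closure region}'' without further comment.

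That said, your sketch for this compatibility is not yet an argument. Knowing that $\mathbf z$ lies in $\partial\mathcal P^\circ(\Delta_0,v_0)\cap\partial\mathcal P^\circ(\Delta,v)$ in $\mathbb C^{2g+n-1}$ and that the open isodelaunay regions are disjoint in $\mathcal H_{\mathrm{marked}}(\kappa)$ does not force $s_{\Delta_0,v_0}(\mathbf z)=s_{\Delta,v}(\mathbf z)$: since $\Phi$ is only a \emph{local} homeomorphism, the two limits ``from each side'' could a priori land on different sheets. The clean fix is to reverse the order of your steps. First establish convexity, hence connectedness, of $\bigcap_{(\Delta,v)\in\sigma}\mathcal P(\Delta,v)$ as you already do; then observe that each $s_{\Delta,v}$ is a continuous section of the local homeomorphism $\Phi$ over this connected set. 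The locus where two such sections agree is open (local uniqueness of sections) and closed (Hausdorff target), so if $\mathcal D_\sigma\ne\emptyset$ they all agree at $\Phi(M)$ for any $M\in\mathcal D_\sigma$, hence everywhere, and the identification follows. When $\mathcal D_\sigma=\emptyset$ the lemma is vacuous.
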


\begin{proof}
By Lemma \ref{full closure region}, $\mathcal D_\sigma$ is homeomorphic via $\Phi$ to
\[
\bigcap_{(\Delta, v) \in \sigma} \left(\overline{\mathcal P^\circ(\Delta, v)} \setminus \mathcal Z_0(\Delta)\right) = \bigcap_{(\Delta, v) \in \sigma} \overline{\mathcal P^\circ(\Delta, v)} \setminus \bigcup_{(\Delta, v) \in \sigma} \mathcal Z_0(\Delta).
\]
The set $\bigcap_{(\Delta, v) \in \sigma} \overline{\mathcal P^\circ(\Delta, v)}$ is connected and convex since each $\overline{\mathcal P^\circ(\Delta, v)}$ is. It remains to show that subtracting the locus $\bigcup_{(\Delta, v) \in \sigma} \mathcal Z_0(\Delta)$ preserves connectedness and convexity. This locus is the union of all $\{z_\gamma = 0\}$ where $\gamma \in E(\Delta)$ for some $(\Delta, v) \in \sigma$. The nonstrict veering inequalities for the $(\Delta, v)$ together define a closed convex polytope in which $\bigcap_{(\Delta, v) \in \sigma} \overline{\mathcal P^\circ(\Delta, v)}$ lies. Each set $\{z_\gamma = 0\}$ is then the intersection of two bounding hyperplanes of this polytope, and hence $\bigcap_{(\Delta, v) \in \sigma} \overline{\mathcal P^\circ(\Delta, v)} \setminus \{z_\gamma = 0\}$ remains connected and convex. Continuing for each $\gamma$, we conclude that $\mathcal D_\sigma$ is connected and convex.
\end{proof}

\setcounter{subsection}{1}
\subsection{The nerve of the decomposition}

Let us recall the following definitions.

\begin{defn}[Nerve of a covering]
Let $\mathscr A = \{A_i\}_{i \in I}$ be a covering of a topological space $X$ by subsets $A_i$, and let $A_\sigma \coloneqq \bigcap_{i \in \sigma} A_i$ for every $\sigma \subset I$. The \emph{nerve} of $\mathscr A$ is the simplicial complex $\mathcal N(\mathscr A)$ that has a vertex $v_i$ for every $i \in I$, and for every $\sigma \subset I$, the vertices $v_i$ for $i \in \sigma$ span a simplex if and only if $A_\sigma \ne \emptyset$.

We say that $\mathscr A$ is \emph{locally finite} if for every $x \in X$, there exists an open neighborhood $U \ni x$ such that $U \cap A_i \ne \emptyset$ for only finitely many $i \in I$. Note that local finiteness implies that $A_\sigma$ is empty for every infinite subset $\sigma \subset I$.
\end{defn}

\begin{defn}[Invariant covering]
Let $\mathscr A = \{A_i\}_{i \in I}$ be a covering of $X$. If a group $G$ acts on $X$, we say that $\mathscr A$ is $G$\emph{-invariant} if the $G$-action induces an action on $\mathscr A$; that is, for every $g \in G$ and $i \in I$, there is a $j \in I$ with $g(A_i) = A_j$. We denote by $G_\sigma$ the \emph{stabilizer subgroup} of $G$ whose elements map $A_\sigma$ onto itself. Observe that if $\mathscr A$ is $G$-invariant, then $G$ acts on $\mathcal N(\mathscr A)$ by simplicial automorphisms.
\end{defn}

\begin{defn}
Let $G$ be a group acting on spaces $X$ and $Y$. We say that a homotopy $H:X \times [0,1] \to Y$ is a $G$\emph{-homotopy} if $H(gx, t) = gH(x, t)$ for every $x \in X$, $g \in G$, and $t \in [0,1]$. We say $A \subset X$ is a $G$\emph{-deformation retract} of $X$ if there is a $G$-homotopy $H: X \times [0,1] \to X$ with $H(x,0) = x$, $H(x,1) \in A$, and $H(a,t) = a$ for every $x \in X$, $a \in A$, and $t \in [0,1]$.

We say that $X$ and $Y$ are $G$\emph{-homotopy equivalent} if there are maps $f:X \to Y$ and $h:Y \to X$ such that $h \circ f$ and $f \circ h$ are $G$-homotopic to the identity maps on $X$ and $Y$, respectively. When $Y$ is a point, we say $X$ is $G$\emph{-contractible}.
\end{defn}

We will also make use of the following nonstandard but straightforward definition.

\begin{defn}
Let $\mathscr A = \{A_i\}_{i \in I}$ and $\mathscr B = \{B_i\}_{i \in I}$ be coverings of $X$ with the same index set $I$. We say that $\mathscr A$ is an \emph{equivalent refinement} of $\mathscr B$ if $A_\sigma$ is a $G_\sigma$-deformation retract of $B_\sigma$ for every $\sigma \subset I$, and the map $\mathcal N(\mathscr A) \to \mathcal N(\mathscr B)$ given by $A_\sigma \mapsto B_\sigma$ is an isomorphism.
\end{defn}

In this section we will prove the following proposition.

\begin{prop}\label{isodelaunay nerve lemma}
There is an $\mathrm{MCG}(S_g, \Sigma)$-homotopy equivalence
\[
\mathcal H_{\mathrm{marked}}(\kappa) \simeq_{\mathrm{MCG}(S_g, \Sigma)} \mathcal N(\mathscr D(\kappa)).
\]
\end{prop}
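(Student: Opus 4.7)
The plan is to verify the hypotheses of the equivariant Nerve Lemma proved in the appendix and then apply it to the covering $\mathscr D(\kappa)$. Writing $G = \mathrm{MCG}(S_g, \Sigma)$, the hypotheses to check are: (a) $\mathscr D(\kappa)$ is $G$-invariant; (b) for each finite $\sigma \subset \mathscr I(\kappa)$ with $\mathcal D_\sigma \neq \emptyset$, the intersection $\mathcal D_\sigma$ is $G_\sigma$-contractible; and (c) the regularity conditions required by the appendix nerve lemma hold for this closed, non-locally-finite covering.

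Hypothesis (a) is essentially immediate: the remarking action of $G$ on $\mathcal H_{\mathrm{marked}}(\kappa)$ sends the $L^\infty$-Delaunay triangulation of $M$ to that of $\phi \cdot M$, so $\phi \cdot \mathcal D(\Delta, v) = \mathcal D(\phi \cdot \Delta, \phi \cdot v)$. For hypothesis (b), Lemma \ref{convex intersections} gives that $\Phi(\mathcal D_\sigma)$ is convex for every finite $\sigma$ with $\mathcal D_\sigma \neq \emptyset$. I claim the stabilizer $G_\sigma$ is finite: any element of $G_\sigma$ permutes the finitely many data in $\sigma$, and a finite-index subgroup of $G_\sigma$ fixes each $(\Delta, v) \in \sigma$ individually. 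Since the MCG-stabilizer of any triangulation of $(S_g, \Sigma)$ is finite (a mapping class fixing each edge of $\Delta$ up to isotopy must be trivial, by the Alexander method), this subgroup, and hence $G_\sigma$ itself, is finite. By Remark \ref{induced linear structure}, $G_\sigma$ acts on $\Phi(\mathcal D_\sigma) \subset \C^{2g+n-1}$ by restrictions of integer-linear maps. Averaging any point over its finite $G_\sigma$-orbit produces a $G_\sigma$-fixed point $p_\sigma$ in the convex set $\Phi(\mathcal D_\sigma)$, and the straight-line homotopy $H(x, t) = (1 - t)x + t\,p_\sigma$, transported back through $\Phi$, is the desired $G_\sigma$-equivariant deformation retract of $\mathcal D_\sigma$ to a point.

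Hypothesis (c) is the main obstacle and is precisely what the strengthened equivariant Nerve Lemma in the appendix is designed to address: the covering $\mathscr D(\kappa)$ is by closed sets, and the infinite adjacency phenomenon prevents local finiteness, so classical versions of the nerve theorem do not apply directly. My plan is to produce an equivariant equivalent refinement to an open cover, in the sense defined earlier in this section. Using the polytopal description $\mathcal D(\Delta, v) \xrightarrow{\sim} \mathcal P(\Delta, v)$ from Lemma \ref{full closure region}, I would take a small $G$-equivariant open polytopal thickening $\mathcal U(\Delta, v) \supset \mathcal D(\Delta, v)$, chosen so that each $\mathcal U_\sigma$ equivariantly deformation retracts onto $\mathcal D_\sigma$ and so that the induced map $\mathcal N(\mathscr U) \to \mathcal N(\mathscr D(\kappa))$ is a $G$-equivariant isomorphism. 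Applying the appendix's equivariant Nerve Lemma to $\mathscr U = \{\mathcal U(\Delta, v)\}$ then yields $\mathcal H_{\mathrm{marked}}(\kappa) \simeq_G \mathcal N(\mathscr U) = \mathcal N(\mathscr D(\kappa))$, completing the proof.
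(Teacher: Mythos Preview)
Your overall strategy is the paper's: verify the hypotheses of Theorem \ref{equivariant nerve lemma} and apply it. Your treatments of $G$-invariance, finiteness of $G_\sigma$, and $G_\sigma$-contractibility of $\mathcal D_\sigma$ via averaging and the straight-line homotopy are essentially identical to Lemma \ref{nerve hypotheses}.

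There is, however, a genuine confusion in your discussion of hypothesis (c). You assert that ``the infinite adjacency phenomenon prevents local finiteness'' of the covering $\mathscr D(\kappa)$. This conflates two distinct notions. The \emph{covering} $\mathscr D(\kappa)$ \emph{is} locally finite: every $M \in \mathcal H_{\mathrm{marked}}(\kappa)$ has a neighborhood meeting only finitely many $\mathcal D(\Delta,v)$; the paper cites this as Corollary A.5 of \cite{frankel-comparison}, and it is one of the hypotheses checked in Lemma \ref{nerve hypotheses}(\ref{good cover}). What fails is local finiteness of the \emph{nerve} $\mathcal N(\mathscr D(\kappa))$ as a simplicial complex (Proposition \ref{infinite adjacency phenomenon}): a fixed $\mathcal D(\Delta,v)$ can meet infinitely many others, even though no single point lies in infinitely many at once. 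Theorem \ref{equivariant nerve lemma} only requires local finiteness of the covering, so the infinite adjacency phenomenon is not an obstruction at this stage of the argument.

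Separately, your construction of the open covering $\mathscr U$ is too vague to count as a proof. ``Take a small $G$-equivariant open polytopal thickening'' does not obviously work: one must choose thickenings for infinitely many $(\Delta,v)$ simultaneously, $G$-equivariantly, so that all finite intersections $\mathcal U_\sigma$ deformation retract onto $\mathcal D_\sigma$ and so that no new incidences are created in the nerve. The paper's actual construction (Lemma \ref{D(k) is a refinement}) is more structured: it first endows $\mathcal H_{\mathrm{marked}}(\kappa)$ with a $G$-invariant simplicial complex structure in which every $\mathcal D_\sigma$ is a subcomplex (by exhausting each closed polytope $\overline{\mathcal P^\circ(\Delta,v)}$ by compact convex polytopes and triangulating compatibly), and then takes $\mathcal U_\sigma$ to be the combinatorial open star of $\mathcal D_\sigma$ in the barycentric subdivision. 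This is what guarantees both the deformation retractions and the nerve isomorphism.
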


There exists in the literature a variety of equivariant versions of the Nerve Lemma, which appears in its most basic form as Corollary 4G.3 of \cite{hatcher}. We will derive Lemma \ref{isodelaunay nerve lemma} as a corollary to our equivariant version, Theorem \ref{equivariant nerve lemma}, a generalization of Theorem 4.6 of \cite{gonzalez-gonzalez}, which is itself a modification of Lemma 2.5 of \cite{hess-hirsch}. See also Proposition 2.2 of \cite{paris} for the case of a group acting freely.

\begin{thm}\label{equivariant nerve lemma}
Let $G$ be a discrete group acting properly discontinuously on a paracompact Hausdorff space $X$. Let $\mathscr A = \{A_i\}_{i \in I}$ be a locally finite $G$-invariant closed covering of $X$ that is an equivalent refinement of a $G$-invariant open covering $\mathscr U = \{U_i\}_{i \in I}$. Suppose that each $G_\sigma$ is finite and each $A_\sigma$ is $G_\sigma$-contractible. Then there is a $G$-homotopy equivalence
\[
X \simeq_G \mathcal N(\mathscr A).
\]
\end{thm}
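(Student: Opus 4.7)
The plan is to construct explicit $G$-equivariant maps $f: X \to |\mathcal N(\mathscr A)|$ and $g: |\mathcal N(\mathscr A)| \to X$ and verify they are mutually $G$-homotopy inverse. The map $f$ comes from a $G$-equivariant partition of unity $\{\phi_i\}$ subordinate to $\mathscr U$: by paracompactness and the finiteness of each stabilizer, one builds a locally finite refinement, picks an ordinary partition of unity, then averages over stabilizers and propagates equivariantly so that $\phi_{gi}(gx) = \phi_i(x)$ and $\supp \phi_i \subset U_i$. Setting $f(x) = \sum_i \phi_i(x) v_i \in |\mathcal N(\mathscr U)| = |\mathcal N(\mathscr A)|$ gives an equivariant map satisfying $f^{-1}(\mathrm{st}^\circ(\sigma)) \subset U_\sigma$ for every simplex $\sigma$.

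The inverse $g$ is built inductively on the skeleta of the barycentric subdivision $|\mathcal N(\mathscr A)|'$, whose simplices correspond to flags $\sigma_0 \subsetneq \cdots \subsetneq \sigma_k$ of simplices of $\mathcal N(\mathscr A)$. For each $G$-orbit of vertices, pick a representative $\sigma$ and a $G_\sigma$-fixed point $p_\sigma \in A_\sigma$: since $A_\sigma$ is $G_\sigma$-contractible, the endpoint of any $G_\sigma$-equivariant null-homotopy must be $G_\sigma$-fixed. Propagate $g$ equivariantly over the orbit. For a higher flag simplex with minimum $\sigma_0$, the chain $A_{\sigma_0} \supset \cdots \supset A_{\sigma_k}$ shows that the inductively defined boundary map lands in $A_{\sigma_0}$, and the $G_{\sigma_0}$-contractibility of $A_{\sigma_0}$ lets us extend over the cell $H$-equivariantly, where $H \subset G_{\sigma_0}$ stabilizes the flag; equivariant propagation over the $G$-orbit completes the inductive step.

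For $gf \simeq_G \mathrm{id}_X$ and $fg \simeq_G \mathrm{id}_{|\mathcal N(\mathscr A)|}$: given $x \in X$, both $x$ and $gf(x)$ lie in $U_\sigma$ for the carrier simplex $\sigma$ of $f(x)$, and the $G_\sigma$-deformation retraction of $U_\sigma$ onto the $G_\sigma$-contractible $A_\sigma$ yields an equivariant path joining them; local homotopies are patched into a global $G$-homotopy via the partition of unity. A symmetric argument on the nerve handles $fg$. The main obstacle is maintaining equivariance at every inductive step of the construction of $g$: this rests on the fact that $G_\sigma$-contractibility with $G_\sigma$ finite supplies both the $G_\sigma$-fixed basepoints and the equivariant null-homotopies needed to extend over each new cell. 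A secondary technical point is arranging the equivariant partition of unity, which uses local finiteness of $\mathscr A$ to produce a locally finite refinement of $\mathscr U$ and finiteness of each stabilizer to make the averaging step well-defined.
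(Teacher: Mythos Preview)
Your outline shares with the paper the construction of a $G$-equivariant partition of unity subordinate to $\mathscr U$ (built by choosing bump functions on a transversal of the $G$-action, averaging over the finite stabilizers, and propagating), and the resulting map $f(x)=\sum_i\phi_i(x)\,v_i$ is exactly the paper's map $\ell_\Phi$. From there, however, the two arguments diverge. The paper does \emph{not} construct a direct inverse $g:|\mathcal N(\mathscr A)|\to X$; instead it passes through the homotopy colimit. One shows $p_f:\hocolim\mathfrak C_{\mathscr U}\to X$ is a $G$-equivalence with inverse $\ell_\Phi$ (the homotopy $\ell_\Phi\circ p_f\simeq\mathrm{id}$ being the \emph{linear} homotopy in barycentric coordinates on each simplex factor), and then invokes a general lemma that a map of $G$-complexes of spaces which is a $G_v$-equivalence at each vertex induces a $G$-equivalence on $\hocolim$. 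Applying this twice, to $U_\sigma\to A_\sigma$ and to $A_\sigma\to *$, yields $X\simeq_G\hocolim\mathfrak C_{\mathscr U}\simeq_G\hocolim\mathfrak C_{\mathscr A}\simeq_G\mathcal N(\mathscr A)'$.

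Your inductive construction of $g$ over the skeleta of $\mathcal N(\mathscr A)'$ is sound: the stabilizer of a flag fixes each of its barycenters, so it acts trivially on the flag simplex, and $G_{\sigma_0}$-contractibility of $A_{\sigma_0}$ restricts to give contractibility of the $H$-fixed locus for any $H\le G_{\sigma_0}$, which is what the equivariant extension needs. The genuine gap is the step ``local homotopies are patched into a global $G$-homotopy via the partition of unity.'' In an arbitrary paracompact $X$ there is no way to average points or paths, so a partition of unity does not by itself glue the $U_\sigma$-local contractions into a continuous $H:X\times[0,1]\to X$; the local homotopies live in different, overlapping $U_\sigma$'s and need not agree on overlaps. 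This is precisely the difficulty that the $\hocolim$ absorbs: the required homotopy becomes linear interpolation in the simplex coordinates of $\hocolim\mathfrak C_{\mathscr U}$, which is globally well-defined, and the comparison with $X$ and with $\mathcal N(\mathscr A)$ is then handled once and for all by the abstract ``levelwise equivalence $\Rightarrow$ $\hocolim$ equivalence'' lemma. If you want to avoid $\hocolim$, you would need to replace the patching sentence with an explicit equivariant obstruction-theory argument (building the homotopy cell by cell over a $G$-CW structure on $X\times[0,1]$), which is substantially more work than what you have written.
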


We will prove Theorem \ref{equivariant nerve lemma} in Appendix A.

\begin{lem}\label{nerve hypotheses}
The $L^\infty$-isodelaunay decomposition $\mathscr D(\kappa)$ satisfies the hypotheses of Theorem \ref{equivariant nerve lemma}:
\begin{enumerate}
\item\label{good cover} $\mathscr D(\kappa)$ is locally finite and $\mathrm{MCG}(S_g, \Sigma)$-invariant,
\item\label{finite isotropy} The stabilizer subgroup $\mathrm{MCG}(S_g, \Sigma)_\sigma$ is finite for every $\sigma \subset \mathscr I(\kappa)$,
\item\label{equivariant contractibility} The set $\mathcal D_\sigma$ is $\mathrm{MCG}(S_g, \Sigma)_\sigma$-contractible for every $\sigma \subset \mathscr I(\kappa)$,
\item\label{equivalent refinement check} $\mathscr D(\kappa)$ is an equivalent refinement of a $G$-invariant open covering.
\end{enumerate}
\end{lem}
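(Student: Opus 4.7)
The proof addresses the four claims separately; (i) and (ii) are combinatorial, while (iii) and (iv) exploit the polytopal structure developed in Section \ref{isodelaunay section}.

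For (i), local finiteness follows from Lemma \ref{L^infty systole}: given $M \in \mathcal H_{\mathrm{marked}}(\kappa)$, on a compact neighborhood $K \ni M$ the $L^\infty$-systole is bounded below and the $L^\infty$-diameter bounded above, so every edge of every Delaunay or limit-Delaunay triangulation of a surface $M' \in K$ has $L^\infty$-length in a fixed bounded range. Such edges correspond, by continuity of periods, to finitely many saddle connections of $M$, so only finitely many pairs $(\Delta, v)$ can have $\mathcal D(\Delta, v)$ meeting $K$. MCG-invariance is immediate, since a mapping class carries the Delaunay triangulation of $M$ to that of its image. For (ii), an element $\phi \in \mathrm{MCG}(S_g, \Sigma)_\sigma$ preserves each $\mathcal D(\Delta, v)$ for $(\Delta, v) \in \sigma$ setwise, and by Lemma \ref{unique triangulation} must send $\Delta$ to itself as a triangulation. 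The stabilizer in $\mathrm{MCG}(S_g, \Sigma)$ of a fixed triangulation of $(S_g, \Sigma)$ is finite, as any such mapping class is determined up to isotopy by its permutation of the finite edge set $E(\Delta)$.

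For (iii), by Lemma \ref{convex intersections}, $\Phi$ realizes $\mathcal D_\sigma$ homeomorphically as a $G_\sigma$-invariant convex subset $\mathcal P_\sigma \subset \C^{2g+n-1}$, and by Remark \ref{induced linear structure} the finite group $G_\sigma$ acts linearly on period coordinates. Hence for any $z_0 \in \mathcal P_\sigma$, the averaged point
\[
z_* = \frac{1}{|G_\sigma|}\sum_{\phi \in G_\sigma}\phi(z_0)
\]
lies in $\mathcal P_\sigma$ by convexity and invariance, and is $G_\sigma$-fixed. The straight-line homotopy $H(z, t) = (1-t)z + t z_*$ is $G_\sigma$-equivariant by linearity of the action, and since $\Phi|_{\mathcal D_\sigma}$ is a $G_\sigma$-equivariant homeomorphism, this pulls back to a $G_\sigma$-contraction of $\mathcal D_\sigma$.

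For (iv), the plan is to construct $\mathcal U(\Delta, v)$ as a small open thickening of $\mathcal D(\Delta, v)$ in the integral affine structure of Remark \ref{induced linear structure}. In local period coordinates each $\mathcal D(\Delta, v)$ is cut out by the veering and quadrilateral inequalities, all of which are linear, so thickening amounts to relaxing each strict inequality $f > 0$ to $f > -\epsilon$ for small $\epsilon > 0$; the integrality of coordinate changes makes such a relaxation coordinate-independent. Using local finiteness from (i) and the finiteness of MCG-orbits of triangulations of $(S_g, \Sigma)$, one picks $\epsilon$ orbit-wise and small enough to avoid creating new intersections, thereby preserving the nerve. The deformation retractions $\mathcal U_\sigma \to \mathcal D_\sigma$ are straight-line projections in period coordinates, made $G_\sigma$-equivariant via averaging as in (iii). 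The principal obstacle lies in (iv): the construction must simultaneously achieve $G$-equivariance, preserve the nerve, and admit compatible $G_\sigma$-equivariant retractions, despite the lack of a global period chart on $\mathcal H_{\mathrm{marked}}(\kappa)$. Patching the local polytopal thickenings into a coherent global open cover will likely require a partition-of-unity argument subordinate to $\mathscr D(\kappa)$.
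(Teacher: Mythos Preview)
Your argument for (iii) matches the paper's exactly: convexity plus the linear action of the finite group $G_\sigma$ gives a fixed point by averaging, and then straight-line contraction works. Your sketch for (i) is plausible (the paper simply cites an external reference for local finiteness), though the reduction to finitely many saddle connections on the fixed surface $M$ would need more care.

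In (ii) you overshoot slightly: an element of $G_\sigma$ need not preserve each individual $\mathcal D(\Delta,v)$ setwise, only permute the finitely many $(\Delta,v)\in\sigma$ among themselves. The paper's argument is that $\phi$ sends each $\Delta$ to some $\Delta'$ with $(\Delta',v')\in\sigma$, and since $\sigma$ is finite and the set of mapping classes taking one fixed triangulation to another is finite, $G_\sigma$ is finite. Your invocation of Lemma~\ref{unique triangulation} does not directly yield the claim you state, but the fix is easy.

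The genuine gap is (iv). Your plan to relax each linear inequality $f>0$ to $f>-\epsilon$ does not work as stated. First, integrality of the transition maps does \emph{not} make ``relax by $\epsilon$'' coordinate-independent: the linear functional $f$ has different norms in different charts, so the thickened region is not well-defined globally. Second, even on a single $\mathcal D(\Delta,v)$ the region is not a bounded polytope but an unbounded one with pieces of $\mathcal Z_0(\Delta)$ removed from its closure, so a uniform $\epsilon$ cannot be chosen across the whole region and there is no global period chart to work in. Third, you need the thickenings to satisfy $\mathcal U_\sigma=\bigcap_{(\Delta,v)\in\sigma}\mathcal U(\Delta,v)$ and to admit $G_\sigma$-equivariant retractions onto $\mathcal D_\sigma$; a naive straight-line projection will not respect the polytopal face structure along $\partial\mathcal D(\Delta,v)$. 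You acknowledge the difficulty, but the proposed partition-of-unity patching is circular, since one typically uses the open cover to build the partition, not the other way around.

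The paper takes a completely different route for (iv), deferred to Lemma~\ref{D(k) is a refinement}: it exhausts each $\mathcal P(\Delta,v)$ by compact polytopes $\mathcal C_n(\Delta,v)$, triangulates them compatibly to endow all of $\mathcal H_{\mathrm{marked}}(\kappa)$ with a $G$-invariant simplicial structure in which every $\mathcal D_\sigma$ is a subcomplex, and then defines $\mathcal U_\sigma$ as the simplicial open regular neighborhood of $\mathcal D_\sigma$ in the barycentric subdivision. This sidesteps the globalization problem entirely and gives the required equivariant deformation retractions for free from standard simplicial topology.
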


\begin{proof}
Throughout, let $G = \mathrm{MCG}(S_g, \Sigma)$. Local finiteness of $\mathscr D(\kappa)$ is proved in Corollary A.5 of \cite{frankel-comparison}. Now, recall that $\mathcal G_{\mathrm{marked}}(\kappa)$ is the preimage of $\mathcal G(\kappa)$ under the quotient map $\mathcal H_{\mathrm{marked}}(\kappa) \twoheadrightarrow \mathcal H(\kappa)$ by $G$, and so the action of $G$ restricts to an action on $\mathcal G_{\mathrm{marked}}(\kappa)$. This induces an action on $\pi_0(\mathcal G_{\mathrm{marked}}(\kappa)) = \{\mathcal D^\circ(\Delta, v)\}_{(\Delta, v) \in \mathscr I(\kappa)}$, and hence also on $\mathscr D(\kappa) = \{\mathcal D(\Delta, v)\}_{(\Delta, v) \in \mathscr I(\kappa)}$. This establishes (\ref{good cover}).

Let $\sigma \subset \mathscr I(\kappa)$. If $g \in G$ satisfies $g(\mathcal D_\sigma) = \mathcal D_\sigma$, then $g$ is the mapping class $[\phi]$ of a homeomorphism $\phi: S_g \to S_g$ such that for each $(\Delta, v) \in \sigma$, the triangulation $\phi(\Delta)$ is isotopic rel $\Sigma$ to $\Delta'$ for some $(\Delta', v') \in \sigma$. Since $\sigma$ is finite, and there exist only finitely many mapping classes $[\phi]$ such that $\phi(\Delta)$ is isotopic rel $\Sigma$ to $\Delta$, we conclude that $G_\sigma$ is finite. This establishes (\ref{finite isotropy}).

By Lemma \ref{convex intersections}, each $\mathcal D_\sigma$ is connected and convex. Recall from Remark \ref{induced linear structure} that $G$ acts on $\mathcal H_{\mathrm{marked}}(\kappa)$ by affine diffeomorphisms. Therefore each stabilizer subgroup $G_\sigma$ acts on $\mathcal D_\sigma$ by linear automorphisms. Now, let $M' \in \mathcal D_\sigma$ be arbitrary. By convexity of $\mathcal D_\sigma$ and linearity of the group action, there is a point
\[
M_{\text{avg}} \coloneqq \frac{1}{|G_\sigma|} \sum_{g \in G_\sigma} g(M') \in \mathcal D_\sigma
\]
fixed by $G_\sigma$. Also by convexity, the set $\mathcal D_\sigma$ is star-shaped with respect to $M_{\text{avg}}$, and hence we have a straight-line homotopy
\begin{align*}
\mathcal D_\sigma \times [0,1] &\to \mathcal D_\sigma \\
(M, t) &\mapsto (1-t)M + tM_{\text{avg}}.
\end{align*}
This homotopy is a deformation retraction of $\mathcal D_\sigma$ onto the point $\{M_{\text{avg}}\}$, and is clearly $G_\sigma$-equivariant. Therefore we have established (\ref{equivariant contractibility}). We defer the proof of (\ref{equivalent refinement check}) to Lemma \ref{D(k) is a refinement}.
\end{proof}

\begin{proof}[Proof of Proposition \ref{isodelaunay nerve lemma}]
This is now an immediate consequence of Theorem \ref{equivariant nerve lemma} and Lemma \ref{nerve hypotheses}.
\end{proof}

\section{The infinite adjacency phenomenon}\label{infinite adjacency section}

\begin{prop}[The infinite adjacency phenomenon]\label{infinite adjacency phenomenon}
The isodelaunay nerve $\mathcal N(\mathscr D(\kappa))$ is not locally finite.
\end{prop}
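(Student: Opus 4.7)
I plan to exhibit a single isodelaunay region $\mathcal D(\Delta_0, v_0)$ whose closure meets infinitely many other region closures, from which it follows that the corresponding vertex of $\mathcal N(\mathscr D(\kappa))$ lies in infinitely many simplices. This is \emph{not} immediate from any failure of local finiteness of $\mathscr D(\kappa)$ itself---that cover is locally finite by Corollary A.5 of \cite{frankel-comparison}---but rather parallels the $\R^3$ example in the introduction, where each $A_i$ meets every $B_j$ despite the cover being locally finite at every point.

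The mechanism I would exploit is as follows. By Theorem \ref{polytope theorem}, $\mathcal D(\Delta_0, v_0)$ is a convex open polytope in period coordinates, typically unbounded. A facet $F$ of this polytope lies on a hyperplane cut out by one equation---either $\Im z_\gamma = 0$, $\Re z_\gamma = 0$ for some $\gamma \in E(\Delta_0)$, or a quadrilateral equality---with all other defining inequalities still strict. Although the polytope has finitely many facets in total, a single unbounded facet $F$ may be shared with infinitely many other regions on the opposite side of its supporting hyperplane, when ``crossing $F$'' forces a re-triangulation of a cylinder-like subregion whose combinatorial type genuinely varies along $F$.

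To instantiate this, I would work in $\mathcal H(0)$---the once-marked torus stratum, flagged in the introduction as the analogue of the $\R^3$ example---with period coordinates $(z_1, z_2) \in \C^2$. I would select $\mathcal D(\Delta_0, v_0)$ corresponding to a long thin torus and examine its facet where $\Im z_\beta = 0$ for the long-direction edge $\beta$. As $\Re z_\beta$ grows along this facet, the Delaunay limit triangulation produced by perturbing $\Im z_\beta$ slightly negative must use diagonals that wrap across the horizontal cylinder an increasing number of times; each distinct wrapping number gives a combinatorially distinct Delaunay data pair $(\Delta_n, v_n)$ with $\mathcal D(\Delta_n, v_n)$ meeting $\mathcal D(\Delta_0, v_0)$ along a sub-polytope of $F$.

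The main obstacle is making this picture rigorous: for each candidate $(\Delta_n, v_n)$ one must verify that the veering and quadrilateral inequalities of Definitions \ref{veering conditions} and \ref{quadrilateral conditions} cut out a nonempty polytope $\mathcal P(\Delta_n, v_n)$ meeting $\mathcal P(\Delta_0, v_0)$ outside $\mathcal Z_0$. I expect this to reduce to a direct computation in linear inequalities in $\C^2$ for the torus case.
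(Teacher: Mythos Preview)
Your overall strategy---fix one region, find an unbounded boundary facet, and show that the regions on the far side of that facet change infinitely often as you move along it---is exactly the mechanism the paper exploits. But the specific deformation you describe does not work as written. On the facet $\Im z_\beta = 0$ the edge $\beta$ is horizontal, so the whole torus is a horizontal cylinder of width $\Re z_\beta$ and height $\Im z_\alpha$. Increasing $\Re z_\beta$ therefore \emph{decreases} the modulus of this cylinder; the diagonals on either side wrap \emph{less}, not more, and for large $\Re z_\beta$ you simply leave the facet of $\mathcal D(\Delta_0,v_0)$ altogether. The parameter that must grow is the cylinder height $\Im z_\alpha$, and even then a pure stretch is not enough: as the square circumscribing each Delaunay triangle grows, new singularities enter its top edge, so the $\Delta_0$-side triangulation itself drifts.

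The paper handles this via Lemma~\ref{adjacency computation}: the path $M_t = a_t^C \circ u_{\pm(t-1)}^C(M)$ combines a vertical stretch with a compensating shear, chosen precisely so that the triangulation $\Delta_{M_t}^{+}$ (obtained by tilting with $A_\e$, $\e>0$) is independent of $t$ once $t > 2/\modulus(C)$. The shear keeps the ``rightmost singularity on the top of the circumsquare'' fixed, pinning $\Delta^{+}$, while the opposite-side triangulation $\Delta_{M_t}^{-}$ is forced to change. Rather than identifying each $(\Delta_n,v_n)$ by hand, the paper then argues indirectly: $\iota(M_t)\to\infty$, and since any fixed $(\Delta',v')$ satisfies a uniform bound on $\iota(\Delta^{+},\Delta')$, infinitely many distinct $(\Delta',v')$ must occur along the path. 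This bypasses the linear-programming verification you anticipate as the main obstacle. Your torus computation can be salvaged by moving in the direction $(z_\alpha,z_\beta)\mapsto(z_\alpha + (t-1)(1+i)\Im z_\alpha,\,z_\beta)$ rather than scaling $z_\beta$, but at that point you are essentially redoing Lemma~\ref{adjacency computation} in coordinates.
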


Recall that a simplicial complex is \emph{locally finite} if every vertex belongs to only finitely many simplices. Failure of local finiteness for $\mathcal N(\mathscr D(\kappa))$ means that there exist $\mathcal D(\Delta, v)$ such that for infinitely many other $\mathcal D(\Delta', v')$, we have $\mathcal D(\Delta, v) \cap \mathcal D(\Delta', v') \ne \emptyset$.

\begin{rem}\label{proper subfaces}
As a corollary to Proposition \ref{infinite adjacency phenomenon}, observe that, while $\mathcal D(\Delta, v)$ and $\mathcal D(\Delta', v')$ are polytopes (minus some intersections of bounding hyperplanes, see Lemma \ref{full closure region}), the intersection $\mathcal D(\Delta, v) \cap \mathcal D(\Delta', v')$ is not always a face of $\mathcal D(\Delta, v)$. While the intersection is of course a lower-dimensional polytope, it is sometimes only a proper subset of a face of $\mathcal D(\Delta, v)$. If the intersection were always a face of $\mathcal D(\Delta, v)$, then $\mathcal N(\mathscr D(\kappa))$ would be locally finite, since each $\mathcal D(\Delta, v)$ has finitely many faces, and the covering $\mathscr D(\kappa)$ is locally finite.
\end{rem}

\begin{rem}\label{frankel's remark}
In the discussion at the end of Section 4 of \cite{frankel-cat}, Frankel makes the following remark (emphasis added).

\begin{quote}
It is possible to write a finite orbifold cover of $QD(\mathcal M_{g,n})$ as a finite union of convex sets with disjoint interiors, based on the topological type of the $L^\infty$ Delaunay triangulation. However, already in the case $\mathcal M_{1,1}$, which has lowest complexity of all moduli spaces one might try to consider, \emph{a cell of top dimension may have infinitely many cells of lower dimension on its boundary}. (The author is grateful to Simion Filip for pointing out this complication.) However, one might expect that the infinite families can be parametrized by finite data, for instance; one may hope that all neighbors of a fixed cell are obtained from one of finitely many other cells up to Dehn twists.
\end{quote}
The $QD(\mathcal M_{g,n})$ that Frankel considers is the moduli space of quadratic differentials, analogous to our $\mathcal H(\kappa)$ for abelian differentials, and the convex sets are the closures of isodelaunay regions. The emphasized text refers to the infinite adjacency phenomenon in the case of $\mathcal H(0)$, where the cells of lower dimension are the lower-dimensional polytopes discussed in Remark \ref{proper subfaces}. The anticipated finiteness result is realized by our Theorem \ref{classification result}, which allows us to identify an infinite collection $\mathcal T(\kappa)$ of adjacencies between isodelaunay regions whose deletion gives rise to the finite simplicial complex $\mathcal I(\kappa)$ of Theorem \ref{intro main}.
\end{rem}

\begin{defn}\label{cylinder definition}
A \emph{cylinder} in a translation surface $M = (X, \omega)$ is an embedding
\[
C:\R/w\Z \times (0,h) \hookrightarrow X, \quad \quad w,h >0
\]
satisfying $C^*\omega = e^{i\theta}dz$ that is maximal in the sense that the closure of its image has singularities of $M$ on each boundary component. Since a cylinder is an embedding, we will often identify it with its image. We call the image of each $\R/w\Z \times \{s\}$ a \emph{core curve} of $C$, for $0 < s < h$. Every closed Euclidean geodesic on $M$ is the core curve of some cylinder.

Let us assume $0 \leq \theta < \pi$. We write
\[
\mathrm{width}(C) = w,\quad \mathrm{height}(C) = h,\quad \mathrm{mod}(C) = \tfrac hw,\quad \mathrm{arg}(C) = \theta.
\]
When $\mathrm{arg}(C) = 0$ we say $C$ is \emph{horizontal}, and when $\mathrm{arg}(C) = \tfrac\pi2$ we say $C$ is \emph{vertical}.
\end{defn}

The following lemma is an elementary computation, whose proof we omit. It will be used in Lemmas \ref{tilted triangulations are optimal} and \ref{modulus bound}.

\begin{lem}\label{displacement formula}
Let $C$ be a horizontal cylinder in a translation surface $M$, and let $\gamma$ and $\gamma'$ be saddle connections lying in $C$ such that $\#(\gamma \cap \gamma') \geq 2$. Let $\Tilde{\gamma}$ be a lift of $\gamma$ to the universal cover $\Tilde{M} \twoheadrightarrow M$. Let $z_1, z_2, \dots, z_k$ be the preimages on $\Tilde{\gamma}$ of the points of $\gamma \cap \gamma'$, ordered consecutively along $\Tilde{\gamma}$. Then the horizontal and vertical distances from each $z_i$ to $z_{i+1}$ are, respectively,
\[
\left|\frac{\slope(\gamma')\width(C)}{\slope(\gamma) - \slope(\gamma')}\right| \quad \quad \text{and} \quad \quad \left|\frac{\slope(\gamma)\slope(\gamma')\width(C)}{\slope(\gamma) - \slope(\gamma')}\right|
\]
\end{lem}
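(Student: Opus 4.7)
The plan is to unroll the horizontal cylinder and reduce the statement to an elementary calculation in the plane. The universal cover of $C$ is the infinite horizontal strip $\Tilde C \cong \R \times (0,h)$, with deck group generated by the horizontal translation by $w = \width(C)$. Choose the lift $\Tilde\gamma$ of $\gamma$ so that it lies in $\Tilde C$; it is then a straight line segment of slope $\slope(\gamma)$. The full preimage of $\gamma'$ in $\Tilde C$ is a disjoint family of line segments of slope $\slope(\gamma')$, any two of which differ by a horizontal translation by an integer multiple of $w$. The preimages $z_1,\dots,z_k$ on $\Tilde\gamma$ of the points of $\gamma\cap\gamma'$ are then exactly the intersections of $\Tilde\gamma$ with these parallel translates of a single lift of $\gamma'$, ordered along $\Tilde\gamma$.

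With this picture in hand, the computation is one line. Write $\Tilde\gamma$ as $y = \slope(\gamma)\,x + b$ and two adjacent lifts of $\gamma'$ as $y = \slope(\gamma')\,x + b'$ and $y = \slope(\gamma')\,x + (b' - \slope(\gamma')w)$, noting that horizontal translation by $w$ changes the $y$-intercept by $-\slope(\gamma')w$. Solving for the two $x$-coordinates of intersection, $x = (b'_i - b)/(\slope(\gamma) - \slope(\gamma'))$, and subtracting yields a horizontal displacement of $|\slope(\gamma')w/(\slope(\gamma) - \slope(\gamma'))|$. Multiplying by $|\slope(\gamma)|$ (the rise-over-run along $\Tilde\gamma$) gives the corresponding vertical displacement $|\slope(\gamma)\slope(\gamma')w/(\slope(\gamma) - \slope(\gamma'))|$. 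These are exactly the claimed formulas.

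No step presents a genuine obstacle; the entire content of the lemma is packaged in the unrolling step, after which the displacement formulas are forced by linear algebra. The only sanity checks are that $\slope(\gamma) \neq \slope(\gamma')$ (otherwise two lifts with a common slope would be either equal or disjoint in $\Tilde C$, and two distinct saddle connections with a common slope can share at most one endpoint, contradicting $\#(\gamma \cap \gamma') \geq 2$) and that consecutive $z_i$ along $\Tilde\gamma$ correspond to adjacent lifts of $\gamma'$, which is immediate because $\Tilde\gamma$ crosses the parallel family monotonically.
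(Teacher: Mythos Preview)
Your argument is correct and is exactly the elementary computation one expects: unroll the cylinder to the strip $\R \times (0,h)$, write $\Tilde\gamma$ and the translates of a lift of $\gamma'$ as lines, and solve. The paper in fact omits the proof of this lemma entirely, declaring it ``an elementary computation,'' so there is no alternative approach to compare against; your write-up supplies precisely the omitted details.
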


The following lemma is a basic computation of edges of Delaunay limit triangulations, which will be used in Lemmas \ref{adjacency computation} and \ref{non-decreasing intersection numbers}. Recall from Remark \ref{gl2r action} that $\GL_2^+(\R)$ acts on $\mathcal H_{\mathrm{marked}}(\kappa)$. In the following lemma, we establish the notation $(\Delta_M^\pm, v_M^\pm)$.

\begin{lem}\label{edge computation} Let $C$ be a horizontal cylinder in a translation surface $M$ with $\mathrm{mod}(C)>1$.
\begin{enumerate}
\item\label{tilted triangulation} Let $A_\e = \left( \begin{smallmatrix} 1 & 0 \\ -\e & 1+\e \end{smallmatrix} \right) \in \GL_2^+(\R)$. Then there exist $(\Delta_M^+, v_M^+), (\Delta_M^-, v_M^-) \in \mathscr I(\kappa)$ such that $A_\e M \in \mathcal D(\Delta_M^+, v_M^+)$ for every $0 < \e \ll 1$, and $A_\e M \in \mathcal D(\Delta_M^-, v_M^-)$ for every $-1 \ll \e < 0$. Hence $M \in \mathcal D(\Delta_M^+, v_M^+) \cap \mathcal D(\Delta_M^-, v_M^-)$.

\item\label{diagonal saddle} Let $S$ be a maximal $L^\infty$-square in $C$, one of whose vertices is a singularity $x$ of $M$. Then the saddle connection $\gamma$ that minimizes $|\mathrm{slope}(\gamma)|$ among saddle connections inscribed in $S$ and starting at $x$ is an edge of $\Delta_M^+$  if $\mathrm{slope}(\gamma) > 0$, and is an edge of $\Delta_M^-$ if $\mathrm{slope}(\gamma) < 0$.
\end{enumerate}
By a $90^\circ$ rotation, the analogous result follows when $C$ is vertical.
\end{lem}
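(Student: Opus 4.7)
The plan is to prove (i) and (ii) in order, leveraging for (i) the polyhedral structure of the isodelaunay decomposition (via Lemma \ref{convex intersections} and Theorem \ref{polytope theorem}) together with local finiteness of $\mathscr D(\kappa)$, and then building on (i) with a direct geometric construction in the universal cover for (ii).

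\textbf{Part (i).} I would argue in period coordinates near $\Phi(M)$. By local finiteness of $\mathscr D(\kappa)$ (Corollary A.5 of \cite{frankel-comparison}, as cited in Lemma \ref{nerve hypotheses}), there is a neighborhood of $M$ met by only finitely many isodelaunay regions $\mathcal D_1, \dots, \mathcal D_N$. By Lemma \ref{convex intersections} each $\Phi(\mathcal D_j)$ is convex, and since $A_\e$ acts linearly on period coordinates (Remark \ref{gl2r action}), the path $\e \mapsto \Phi(A_\e M)$ is affine in $\e$. Hence for $\e_0 > 0$ small, each set
\[
S_j^+ := \{\e \in (0, \e_0) \st A_\e M \in \mathcal D_j\}
\]
is a closed convex subset of $(0, \e_0)$, i.e.\ an interval. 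Since the $\mathcal D_j$ cover a neighborhood of $M$, the $S_j^+$ cover $(0, \e_0)$. If none satisfied $\inf S_j^+ = 0$, then $\delta := \min_j \inf S_j^+ > 0$ and the union $\bigcup_j S_j^+$ would miss $(0, \delta)$, a contradiction. Thus some $S_{j^+}^+ \supset (0, \delta)$ for some $\delta > 0$, and we set $(\Delta_M^+, v_M^+) := (\Delta_{j^+}, v_{j^+})$. Since $\mathcal D(\Delta_M^+, v_M^+)$ is closed, continuity forces $M \in \mathcal D(\Delta_M^+, v_M^+)$. The data $(\Delta_M^-, v_M^-)$ are obtained symmetrically with $\e \to 0^-$.

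\textbf{Part (ii).} Assuming $\slope(\gamma) > 0$ (the opposite sign is symmetric), it suffices to exhibit, on $A_\e M$ for some $\e > 0$ small enough that $A_\e M \in \mathcal D^\circ(\Delta_M^+, v_M^+)$, a maximal $L^\infty$-square in the universal cover whose boundary carries exactly three singularities, two of which are endpoints of an inscribed lift of $\gamma$. By Lemma \ref{unique triangulation} this identifies $\gamma$ as an edge of $\Delta_M^+$. I would lift $S$ to an embedded square $\Tilde S \subset \Tilde M$ with the lift $\Tilde x$ of $x$ at the bottom-left corner $(0,0)$, so that the corresponding lift of $\gamma$ terminates at a singularity $\Tilde y = (h, y_2) \in \partial \Tilde S$ with $h = \mathrm{height}(S)$ and $0 < y_2 \leq h$. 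Under $A_\e$ the endpoint $\Tilde y$ moves to $(h,\, y_2 - \e(h - y_2))$, which still lies on the right side of the $L^\infty$-square $\Tilde S' \subset A_\e \Tilde M$ of side $h$ based at $(0,0)$. The candidate circumsquare is $\Tilde S'$, together with a third boundary singularity inherited from $\partial \Tilde S$.

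\textbf{The main obstacle} is verifying that $\Tilde S'$ is maximal on $A_\e \Tilde M$ and carries exactly three boundary singularities for small $\e > 0$. This reduces to tracking how singularities of $\Tilde M$ originally near $\partial \Tilde S$ shift under $A_\e$, ruling out both the appearance of a spurious fourth singularity and the loss of the third. The hypothesis $\mathrm{mod}(C) > 1$ is critical here: it gives $\Tilde S$ vertical room inside the cylinder so that small tilts preserve the local singularity pattern, while the minimality of $|\slope(\gamma)|$ forces the surviving third boundary singularity of $\Tilde S'$ to be exactly the one realizing $\gamma$ as the diagonal of the Delaunay triangle on the appropriate side of $\gamma$.
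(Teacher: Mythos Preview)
Your Part (i) is correct and is essentially the paper's argument: the path $\e \mapsto A_\e M$ is linear in period coordinates, and local finiteness of the covering by convex polytopes forces the path to remain in a single $\mathcal D(\Delta,v)$ for all sufficiently small $\e$ of a fixed sign.

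Your Part (ii), however, contains a geometric misstep that derails the construction. Since $C$ is a horizontal cylinder with $\modulus(C)>1$, a maximal $L^\infty$-square $S$ in $C$ has side length equal to $h = \hei(C)$, and its top and bottom sides lie along the two boundary components of $C$. All singularities on $\partial\Tilde S$ therefore lie on the \emph{top or bottom} side, never on the left or right. With $\Tilde x$ at the bottom-left corner $(0,0)$, the other endpoint of the minimal-slope saddle connection $\gamma$ is a point $z=(h-\delta,h)$ on the top side (with $0\le\delta<\width(C)$), not a point $(h,y_2)$ on the right side as you write. Under $A_\e$ this endpoint moves to $(h-\delta,\,h+\e\delta)$, which lies \emph{above} the original square.

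This means your candidate circumsquare $\Tilde S'$ of side $h$ based at $(0,0)$ cannot be the right object. The third vertex of the triangle, which you leave unspecified, is the first singularity $y=(\ell,0)$ to the right of $x$ on the bottom of $C$; under $A_\e$ it moves to $(\ell,-\e\ell)$. The three points $x,y,z$ on $A_\e M$ thus have vertical span $h+\e(\delta+\ell)$, and the actual circumsquare has this larger side length. Checking that exactly $x,y,z$ lie on its boundary is precisely where the minimality of $|\slope(\gamma)|$ (equivalently, minimality of $\delta$) and the choice of $y$ as the \emph{nearest} singularity to the right of $x$ are used. With the endpoints misplaced on the vertical sides, this verification cannot even be set up, and the appeal to $\modulus(C)>1$ in your last paragraph is not doing the work you claim for it.
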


\begin{rem}
The matrix $A_\e$ is chosen because it sends horizontal lines to lines of nonzero slope, it preserves the horizontal coordinate of every vector, and $\e \mapsto A_\e M$ is linear in period coordinates. These latter two properties are for convenience; a rotation matrix would also suffice.
\end{rem}

\begin{proof}[Proof of Lemma \ref{edge computation}]
Because the path $\e \mapsto A_\e M$ is linear in period coordinates, and $\mathscr D(\kappa)$ is a locally finite covering by polytopes, there exists some $\mathcal D(\Delta_M^+, v_M^+)$ in which $A_\e M$ lies for every $0 < \e \ll 1$. Similarly for the path parametrized by negative $\e$. This establishes (\ref{tilted triangulation}).

Let $w = \mathrm{width}(C)$ and $h = \mathrm{height}(C)$. Assume first that $x$ is the bottom left vertex of $S$, so that $x = \overline{S}(0,0)$. Let $y$ be the first singularity on $M$ to the right of $x$ (it may happen that $y=x$), so $y = \overline{S}(\ell,0)$ for some $0 < \ell \leq w$. The singularities on the top of $S$ are of the form $\overline{S}(h-\delta,h)$. Let $z$ be the singularity that minimizes $\delta$. Note that $0 \leq \delta < w$. Then the image under $\overline{S}$ of the line segment from $(0,0)$ to $(h-\delta,h)$ is our slope-minimizing saddle connection $\gamma$.

Let us see why $x$, $y$, and $z$ form a Delaunay limit triangle. Let $0 < \e \ll 1$. Then $A_\e M$ has an $L^\infty$-square of height $h+\e(\delta + \ell)$ with only $x$, $y$, and $z$ on its boundary, as shown in Figure \ref{deformed triangle}. Thus the singularities $x$, $y$, and $z$ form an $L^\infty$-Delaunay triangle $T$ on $A_\e M$. Since a triangle's being $L^\infty$-Delaunay is an open condition, $T$ must be a triangle of $\Delta_M^+$, even if $A_\e M$ is not  $L^\infty$-generic.

If $x$ is instead the top right vertex of $S$, then we again apply $A_\e$ to obtain Figure \ref{deformed triangle} rotated by $180^\circ$, and hence $T$ is a triangle of $\Delta_M^+$. If $x$ is the bottom right or top left vertex of $S$ (so $\mathrm{slope}(\gamma)<0$), then we instead apply $A_\e$ for $-1 \ll \e < 0$ in order to obtain a suitably rotated and reflected version of Figure \ref{deformed triangle}, and hence $T$ is a triangle of $\Delta_M^-$ in these cases.
\begin{figure}
\begin{center}
\includegraphics[scale=0.6]{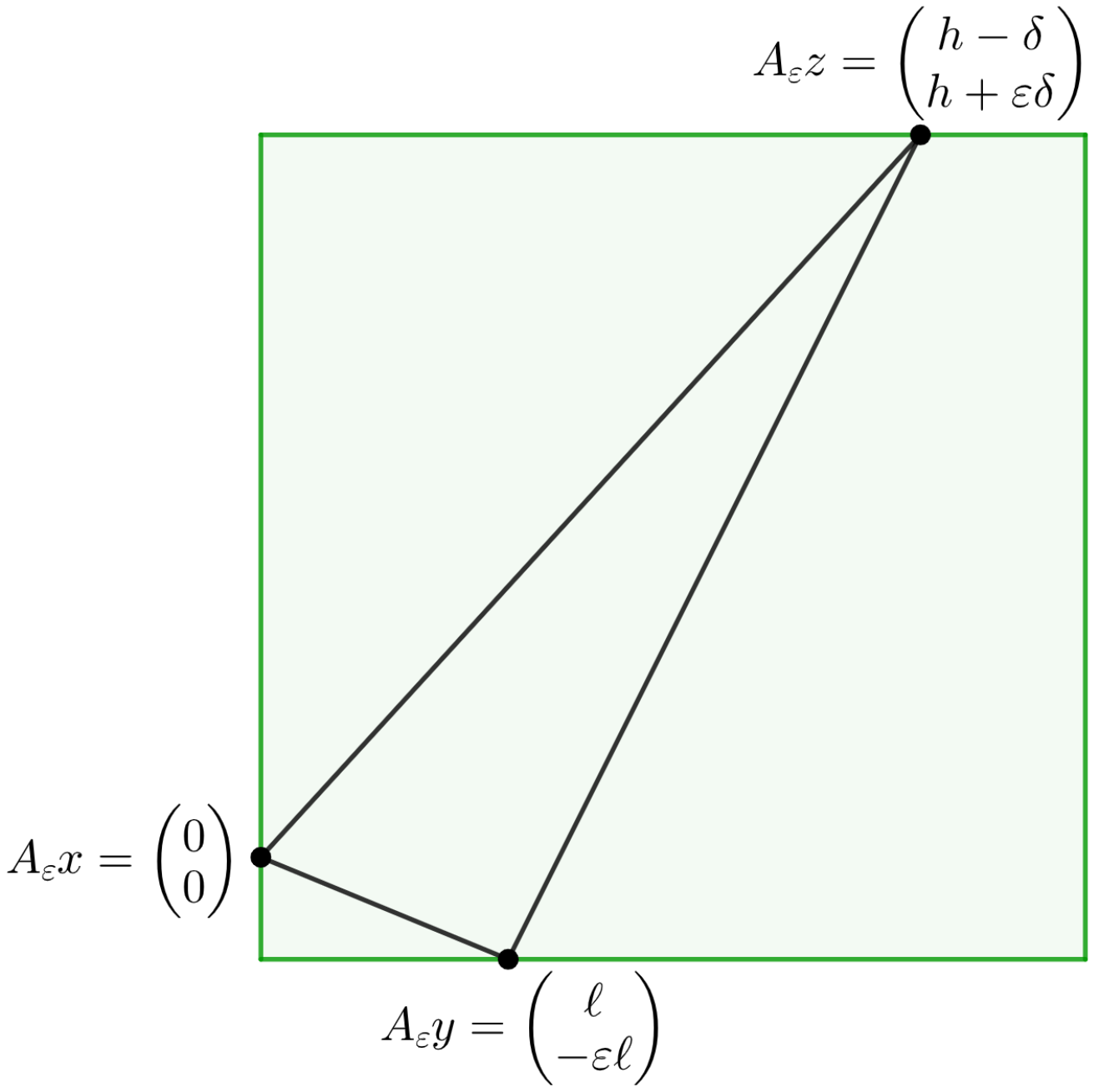}
\end{center}
\caption{As $\e \to 0$, we obtain a Delaunay limit triangle in $C$}
\label{deformed triangle}
\end{figure}
\end{proof}

\begin{defn}
The \emph{intersection number} of two arcs $\gamma, \gamma'$ on $S_g$ is
\[
\iota(\gamma, \gamma') \coloneqq \min \#(p \cap p')
\]
where $p$ and $p'$ range over all arcs isotopic to $\gamma$ and $\gamma'$ rel $\Sigma$. We consider arcs to be parametrized by open intervals, so that intersections at the endpoints are not counted.\end{defn}

Note that when the arcs $\gamma$ and $\gamma'$ are saddle connections on a translation surface that meet transversely, we have
\[
\iota(\gamma, \gamma') = \# (\gamma \cap \gamma').
\]

\begin{defn}
Consider $M \in \mathcal H_{\mathrm{marked}}(\kappa)$, and let $\sigma \subset \mathscr I(\kappa)$ be the set of Delaunay limit triangulations of $M$, hence $M \in \mathcal D_\sigma$. We define
\[
\iota(\sigma) = \iota(M) \coloneqq \max \{\iota(\gamma, \gamma') \st \gamma \in E(\Delta), \gamma' \in E(\Delta') \text{ for some } (\Delta,v),(\Delta',v') \in \sigma\}.
\]
Furthermore, for a cylinder $C$ in $M$, define $\iota(M; C)$ analogously, where $\gamma$ and $\gamma'$ range only over those edges lying in $C$.
\end{defn}

\begin{rem}\label{isotopy class remark}
Let $(\Delta, v)$ be a Delaunay limit triangulation of a translation surface $M$. Recall from Figure \ref{delaunay degeneration} that the geodesic representative on $M$ of an edge $\gamma$ of $\Delta$ may contain points of $\Sigma$ on its interior when this representitive is horizontal or vertical. Therefore, this geodesic representative may fail to to be isotopic rel $\Sigma$ to the arc $\gamma$, although it is of course a limit of arcs in the relative isotopy class of $\gamma$. It is therefore sometimes required to pass to non-geodesic representatives of arcs $\gamma \in E(\Delta)$, $\gamma' \in E(\Delta')$ in order to compute $\iota(\gamma, \gamma')$. We treat this situation in Lemma \ref{controlling degenerate edges}.
\end{rem}

It follows from Lemma \ref{cylinder without hypothesis} that if $\iota(M) \geq 3$, then $\iota(M) = \iota(M; C)$, where $C$ is some horizontal or vertical cylinder of modulus greater than $1$.

\begin{lem}\label{tilted triangulations are optimal}
Let $C$ a horizontal cylinder of modulus greater than 1 in a translation surface $M$. Then
\[
\iota(M; C) = \max \{\iota(\gamma, \gamma') \st \gamma \in E(\Delta_M^+), \gamma' \in E(\Delta_M^-) \text{ lying in }C\},
\]
and furthermore, this maximum is realized by saddle connections $\gamma$, $\gamma'$ of the form (\ref{diagonal saddle}) in Lemma \ref{edge computation} and satisfying $\slope(\gamma') < 0 < \slope(\gamma)$. By a $90^\circ$ rotation, the analogous result follows when $C$ is vertical.
\end{lem}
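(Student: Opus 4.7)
The plan is to compute intersection numbers of saddle connections in $C$ in terms of their slopes via Lemma~\ref{displacement formula}, show that the maximum is attained by pairs with opposite-sign, minimum-magnitude slopes, and then identify such pairs via Lemma~\ref{edge computation}(\ref{diagonal saddle}) as edges of $\Delta_M^+$ and $\Delta_M^-$. First, I would reduce to non-horizontal edges: any horizontal saddle connection in $C$ must lie on $\partial C$, so for any $\gamma'$ with interior in $C$ the geodesic representatives meet only at endpoints on $\partial C$, and after suitable isotopy (per Remark~\ref{isotopy class remark}) we have $\iota(\gamma, \gamma') = 0$. Thus any maximizing pair consists of two non-horizontal edges, each crossing $C$ exactly once from the bottom boundary to the top boundary.

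For non-horizontal $\gamma, \gamma'$ in $C$ with slopes $s, s'$, I would apply Lemma~\ref{displacement formula}: the horizontal extent of $\gamma$ in the universal cover of $C$ is $\mathrm{height}(C)/|s|$ and consecutive lifts of $\gamma'$ meet $\gamma$ at horizontal spacing $|s'\,\mathrm{width}(C)/(s - s')|$, giving
\[
\iota(\gamma, \gamma') = \frac{\mathrm{height}(C)\,|s - s'|}{\mathrm{width}(C)\,|s s'|} + O(1).
\]
When $\sign(s) \neq \sign(s')$ the leading term equals $\mathrm{mod}(C)^{-1}(1/|s| + 1/|s'|)\,\mathrm{height}(C)$; when $\sign(s) = \sign(s')$ the leading term is strictly smaller. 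The hypothesis $\mathrm{mod}(C) > 1$ ensures the main term dominates boundary corrections, so that intersection number is monotonically increasing as $|s|, |s'|$ decrease with $\sign(s) \neq \sign(s')$.

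Third, by Lemma~\ref{edge computation}(\ref{diagonal saddle}), for each maximal $L^\infty$-square $S$ in $C$ with a singularity $x$ at a vertex, the positive-slope (resp.\ negative-slope) saddle connection inscribed in $S$ from $x$ that minimizes $|\slope|$ is an edge of $\Delta_M^+$ (resp.\ $\Delta_M^-$). I would argue that every non-horizontal edge of a Delaunay limit triangulation of $M$ lying in $C$ has absolute slope at least that of some such slope-minimizing diagonal from one of its endpoints, again using the hypothesis $\mathrm{mod}(C) > 1$. Combining this with the previous paragraph: given any non-horizontal $\gamma \in E(\Delta)$ and $\gamma' \in E(\Delta')$ in $C$ from Delaunay limit triangulations of $M$, there exist $\tilde\gamma \in E(\Delta_M^+)$ of positive slope and $\tilde\gamma' \in E(\Delta_M^-)$ of negative slope with $|\slope(\tilde\gamma)| \leq |\slope(\gamma)|$ and $|\slope(\tilde\gamma')| \leq |\slope(\gamma')|$, and the intersection formula then yields $\iota(\tilde\gamma, \tilde\gamma') \geq \iota(\gamma, \gamma')$. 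Hence the maximum is realized by slope-minimizing diagonals of opposite signs in $E(\Delta_M^+) \times E(\Delta_M^-)$.

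The main obstacle will be making the third step rigorous when the maximal $L^\infty$-square containing an edge of a Delaunay limit triangulation in $C$ is not itself contained in $C$---in particular, when its side length exceeds $\mathrm{height}(C)$ and it extends outside $C$ vertically. The hypothesis $\mathrm{mod}(C) > 1$ should be crucial here, forcing maximal $L^\infty$-squares in $M$ that meet both horizontal boundaries of $C$ to have side exactly $\mathrm{height}(C)$ and guaranteeing that the relevant slope-minimizing diagonals in $\Delta_M^\pm$ themselves lie in $C$ with slopes small enough to dominate the intersection count.
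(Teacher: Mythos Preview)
Your overall strategy matches the paper's: both use Lemma~\ref{displacement formula} together with the slope-minimizing diagonals of Lemma~\ref{edge computation}(\ref{diagonal saddle}). The paper, however, avoids any asymptotic formula. Starting from a pair $(\gamma,\gamma')$ realizing $\iota(M;C)$ (with, say, $\slope(\gamma)>0$), the paper takes the maximal $L^\infty$-square $S$ \emph{in $C$} whose bottom-left vertex is the bottom endpoint $x$ of $\gamma$, and replaces $\gamma$ by the slope-minimizing $\gamma^+\in E(\Delta_M^+)$ inscribed in $S$ from $x$. Two exact geometric facts then yield $\iota(\gamma^+,\gamma')\geq\iota(\gamma,\gamma')$: first, $\gamma^+$ meets $\gamma'$ nearer the bottom endpoint of $\gamma'$ than $\gamma$ does; second, Lemma~\ref{displacement formula} shows the vertical spacing between consecutive points of $\gamma^+\cap\gamma'$ is no larger than for $\gamma\cap\gamma'$. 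A symmetric replacement of $\gamma'$ by $\gamma^-\in E(\Delta_M^-)$ finishes the argument.

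The gap in your version is the passage from monotonicity of the leading term $\modulus(C)\,|s-s'|/|ss'|$ to monotonicity of $\iota(\gamma,\gamma')$ itself. The $O(1)$ correction does not become negligible merely because $\modulus(C)>1$: nothing rules out $\iota(M;C)$ being small, in which case the error is comparable to the main term and your asymptotic gives no inequality at all. The paper's two-step comparison (lower first intersection, tighter spacing) is precisely the exact refinement of your heuristic, and you should substitute it for the $O(1)$ formula. Your ``main obstacle'' is also slightly misdirected: the paper never invokes the circumsquare of $\gamma$, only the maximal square $S$ in $C$ with a vertex at the endpoint of $\gamma$, which lies in $C$ by construction. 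The genuine implicit point---equivalent to your worry but phrased differently---is that $\gamma$ itself is inscribed in $S$, i.e.\ that $|\slope(\gamma)|\geq 1$, which is what guarantees $\slope(\gamma^+)\leq\slope(\gamma)$ and makes the replacement work.
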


\begin{proof}
Let $\gamma \in E(\Delta)$ and $\gamma' \in E(\Delta')$ lie in a cylinder $C$ such that $\iota(M; C) = \iota(\gamma, \gamma')$. Up to reflection about a vertical line, we may suppose at least one of $\gamma$, $\gamma'$ has positive slope; let $\gamma$ have positive slope, and if both have positive slope, then let $\slope(\gamma) < \slope(\gamma')$. Let $S$ be a maximal $L^\infty$-square in $C$ whose bottom left vertex is an endpoint $x$ of $\gamma$. Let $\gamma^+ \in E(\Delta_M^+)$ be the saddle connection that minimizes $|\slope(\gamma^+)|$ among saddle connections inscribed in $S$ and starting at $x$. If $\gamma \ne \gamma^+$, then $\gamma^+$ intersects $\gamma'$ nearer to the bottom endpoint of $\gamma'$ than does $\gamma$, and Lemma \ref{displacement formula} shows that the vertical distance between a consecutive pair of points in $\gamma^+ \cap \gamma'$ is less than that in $\gamma \cap \gamma'$. Hence $\iota(\gamma^+, \gamma') \geq \iota(\gamma, \gamma')$.

Let $R$ be the maximal $L^\infty$-square in $C$ whose bottom right vertex is an endpoint $y$ of $\gamma'$. Let $\gamma^- \in E(\Delta_M^-)$ be the saddle connection that minimizes $|\slope(\gamma^-)|$ among saddle connections inscribed in $S$ and starting at $y$. If $\gamma' \ne \gamma^-$, then $\gamma^-$ intersects $\gamma^+$ nearer to the bottom endpoint of $\gamma^+$ than does $\gamma'$, and Lemma \ref{displacement formula} shows that the vertical distance between a consecutive pair of points in $\gamma^+ \cap \gamma^-$ is less than that in $\gamma^+ \cap \gamma'$. Hence $\iota(\gamma^+, \gamma^-) \geq \iota(\gamma^+, \gamma')$. Since $\gamma^+ \in E(\Delta_M^+)$ and $\gamma^- \in E(\Delta_M^+)$ are of the form (\ref{diagonal saddle}) in Lemma \ref{edge computation} and satisfy $\slope(\gamma^-) < 0 < \slope(\gamma^+)$, we are done.
\end{proof}

\begin{defn}\label{stretch definition}
Let $C$ be a cylinder with argument $\theta$ in a translation surface $M$, and let $R_\theta \coloneqq \left(\begin{smallmatrix} \cos\theta & -\sin\theta \\ \sin\theta & \cos\theta \end{smallmatrix}\right)$. For $t > 0$, we define the \emph{cylinder stretch} $a_t^C$, and for $t \in \R$ we define \emph{cylinder shear} $u_t^C$ as follows.

When $t > 0$, we may obtain a new translation surface $a_t^C(M)$ by acting on $C$ via the matrix $R_\theta\left(\begin{smallmatrix} 1 & 0 \\ 0 & t \end{smallmatrix}\right)R_{-\theta}$ while leaving the rest of $M$ unchanged. For any $t\in\R$, we may similarly act on $C$ via the matrix $R_\theta\left(\begin{smallmatrix} 1 & t \\ 0 & 1 \end{smallmatrix}\right)R_{-\theta}$ while leaving the rest of $M$ unchanged to obtain $u_t^C(M)$.

Observe that $a_t^C(M)$ and $u_t^C(M)$ have a cylinder where $C$ used to be, and so by abuse of notation we may let $C$ denote this cylinder. Note that $u_s^C \circ a_t^C = a_t^C \circ u_{st}^C$.
\end{defn}

The following lemma produces a path $M_t$ along the boundary of an isodelaunay region that passes through the boundaries of infinitely many other isodelaunay regions; see the subsequent proof of Proposition \ref{infinite adjacency phenomenon}.

\begin{lem}\label{adjacency computation}
Let $M \in \mathcal H_{\mathrm{marked}}(\kappa)$ have a horizontal cylinder C, and let
\[
M_t \coloneqq a_t^C \circ u_{\pm(t-1)}^C(M).
\]
Then for all $s,t > \tfrac{2}{\mathrm{mod}(C)}$, we have $\mathcal D\left(\Delta_{M_s}^\pm, v_{M_s}^\pm\right) = \mathcal D\left(\Delta_{M_t}^\pm, v_{M_t}^\pm\right)$. Furthermore, we have $\lim_{t \to \infty} \iota(M_t) = \infty$. By a $90^\circ$ rotation, the analogous result follows when $C$ is vertical.
\end{lem}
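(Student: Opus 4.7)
The plan is to treat the two assertions of the lemma separately. By the $90^\circ$-rotation symmetry stated, we assume $C$ is horizontal, and focus on the $+$ case of the $\pm$ signs, so that the matched $(\Delta_{M_t}^\pm, v_{M_t}^\pm)$ in the conclusion is $(\Delta_{M_t}^+, v_{M_t}^+)$; the $-$ case is analogous. For Part~1, the region $\mathcal D^\circ(\Delta_{M_t}^+, v_{M_t}^+)$ is by definition the one containing $A_\e M_t$ for all sufficiently small $\e > 0$, so it suffices to show that $A_\e M_t$ lies in a single fixed isodelaunay region for all $t > 2/\mathrm{mod}(C)$ and appropriately small $\e > 0$. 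The approach is to compute the Delaunay triangulation of $A_\e M_t$ explicitly. Off $C_t$, this triangulation agrees with that of $A_\e M$ off $C$, since $a_t^C \circ u_{t-1}^C$ is the identity outside $C$. Inside $C_t$, the positively-sloped edges of $\Delta_{M_t}^+$ are given by Lemma~\ref{edge computation}(\ref{diagonal saddle}): for each singularity $x$ on $\partial C$ that is a vertex of a maximal $L^\infty$-square $S$ in $A_\e C_t$, the slope-minimizing positively-sloped saddle connection from $x$ inscribed in $S$ is an edge. The modulus condition $\mathrm{mod}(C_t) = t \cdot \mathrm{mod}(C) > 2$ ensures that such maximal squares exist within $C_t$, so the selected edge is determined by an $L^\infty$-norm comparison in the universal cover of $A_\e C_t$. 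A direct calculation will show that the selected edge's homotopy class rel $\Sigma$ depends only on the singularity positions on $\partial C$ modulo $w$ and on the sign of the $A_\e$ tilt, hence is $t$-independent. Constancy of $v_{M_t}^+$ follows because it is determined by the triangulation together with the tilt sign.

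For Part~2, Lemma~\ref{tilted triangulations are optimal} yields $\iota(M_t) \geq \iota(M_t; C_t) = \iota(\gamma^+, \gamma^-)$ for some $\gamma^\pm$ of the form in Lemma~\ref{edge computation}(\ref{diagonal saddle}) lying in $C_t$, with $\slope(\gamma^-) < 0 < \slope(\gamma^+)$. A direct computation in $C_t$ --- using that it has height $th$, width $w$, and shear $+(t-1)h$ --- shows that these slope-minimizing $\gamma^\pm$ have holonomies whose horizontal components grow in magnitude like $th$ as $t \to \infty$, with opposite signs, while the vertical components are $th$; hence $\slope(\gamma^+) \to +1$ and $\slope(\gamma^-) \to -1$. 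By Lemma~\ref{displacement formula}, the vertical spacing between consecutive intersections of $\gamma^+$ and $\gamma^-$ in $C_t$ is $|\slope(\gamma^+)\slope(\gamma^-)\,w/(\slope(\gamma^+) - \slope(\gamma^-))| \to w/2$, while each of $\gamma^\pm$ traverses the full height $th$ of $C_t$. Therefore $\iota(\gamma^+, \gamma^-) \geq \lfloor 2th/w \rfloor = \lfloor 2t \cdot \mathrm{mod}(C) \rfloor \to \infty$ as $t \to \infty$.

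The principal obstacle is Part~1: rigorously proving that the selected Delaunay edge at each boundary singularity has the same homotopy class for every $t > 2/\mathrm{mod}(C)$. The geometric danger is that, as $t$ grows through multiples of $w/h$, the shear $(t-1)h$ passes through integer shifts of $w$, so naively the chosen Delaunay candidate could shift by a Dehn twist along the core curve of $C$. Ruling this out requires careful $L^\infty$-norm comparison of competing candidates under the $A_\e$ tilt, showing that the tilt consistently breaks ties in favor of the same wrap class; the modulus bound is essential here, as it decouples the max-square computation from the cylinder's global topology and from the rest of the surface.
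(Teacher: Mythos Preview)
Your Part~2 is essentially correct and matches the paper's argument: both establish $\slope(\gamma^\pm) \to \pm 1$ while the cylinder height $th \to \infty$, forcing $\iota(\gamma^+,\gamma^-) \to \infty$.

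For Part~1 your outline has the right shape, but the final paragraph reveals a misconception about where the difficulty lies, and the deferred ``direct calculation'' is precisely the content of the lemma. The map $a_t^C \circ u_{t-1}^C$ acts on $C$ via $B_t = \left(\begin{smallmatrix} 1 & t-1 \\ 0 & t \end{smallmatrix}\right)$, which sends a point $(h-\delta,\, h)$ on the top of $C$ (coordinates on $M$ relative to a bottom singularity $x$) to $(th-\delta,\, th)$ on $M_t$, and fixes the bottom boundary pointwise. Hence the offset $\delta$ of the selected top singularity from the right edge of the maximal square (side $th$) is \emph{independent of $t$}, as is the position $\ell$ of the neighbouring bottom singularity. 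The bound $t > 2/\mathrm{mod}(C)$ gives $th > 2w$, so $\ell \le w < th - \delta$; this is what forces every non-horizontal side of each Delaunay limit triangle in $C$ to have positive slope, and hence what pins down $v_{M_t}^+$ (which is not, as you suggest, determined by the triangulation and tilt sign alone). There are no ``competing wrap classes'' for $A_\e$ to adjudicate: the shear $u_{t-1}^C$ is paired with the stretch $a_t^C$ precisely so that the same lift of each top singularity is selected for every $t$. Your Dehn-twist worry would be legitimate for, say, $a_t^C$ alone, where the offset from the square's corner genuinely drifts with $t$; but the specific path $M_t$ in the lemma is designed to avoid that, and $A_\e$ plays no role beyond defining $(\Delta_{M_t}^+, v_{M_t}^+)$ via Lemma~\ref{edge computation}.
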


\begin{proof}
Fix some $t > 2/\modulus(C)$, and let $w = \width(C)$ and $h = \hei(C)$. On the surface $M_t = a_t^C \circ u_{t-1}^C(M)$, the horizontal cylinder $C$ has width $w$ and height
\[
th > 2h/\modulus(C) = 2w.
\]
Let $x$ be a singularity of $M_t$ on the boundary of $C$. Let us consider an $L^\infty$-square $S:(0,th)^2 \to M_t$ whose image is contained in $C$, such that $x = \overline{S}(0,0)$. By Lemma \ref{edge computation}, the saddle connection $\gamma$ that minimizes $|\mathrm{slope}(\gamma)|$ among saddle connections inscribed in $S$ and starting at $x$ belongs to a triangle $T_x$ of the triangulation $\Delta_{M_t}^+$ defined in the same lemma.

As in Lemma \ref{edge computation}, let $z = \overline{S}(th-\delta,th)$ be the other endpoint of $\gamma$, and let $y = \overline{S}(\ell,0)$ be the first singularity to the right of $x$, where $0 < \ell \leq w$ and $0 \leq \delta < w$. Since $\ell \leq w < th-\delta$, the slopes of both of the non-horizontal edges of $T_x$ are positive. After possibly further restricting how small $\e$ must be in part (\ref{tilted triangulation}) of Lemma \ref{edge computation}, we obtain Delaunay limit triangles $T_{x'}$ on $M_t$ for each singularity $x'$ on the bottom of $C$. Notice that the other $L^\infty$-Delaunay triangles lying in $C$ (those with two sides on the top of $C$) are entirely determined by these triangles $T_{x'}$, and that similarly, all their non-horizontal edges have positive slope.

We claim that $M_s \in \mathcal D(\Delta_{M_t}^+, v_{M_t}^+)$ for all $s > t$. Observe that $a_s^C \circ u_{s-1}^C$ acts on $C$ via the matrix $B_s = \left(\begin{smallmatrix} 1 & s-1 \\ 0 & s \end{smallmatrix}\right)$, and let $A_\e = \left( \begin{smallmatrix} 1 & 0 \\ -\e & 1+\e \end{smallmatrix} \right)$ as in Lemma \ref{edge computation}. Since $(\ell,0) = B_s(\ell,0)$ and $(sh-\delta, sh) = B_s(h-\delta, h)$, the triangle $T_x$ on $A_\e M_s$ is of the form in Figure \ref{deformed triangle}, with $sh$ in place of $h$. Therefore each $T_x$ is an $L^\infty$-Delaunay triangle of $A_\e M_s$, with edges having the same slope-sign as on $A_\e M_t$, for every $s > t$ and every $0 < \e \ll 1$. Since $a_s^C \circ u_{s-1}^C$ leaves the complement of $C$ in $M$ unchanged, every triangle $T$ of $\Delta_{M_t}^+$ not lying in $C$ is also an $L^\infty$-Delaunay triangle of $A_\e M_s$ for every $s > t$ and every $0 < \e \ll 1$. We conclude that $M_s \in \mathcal D(\Delta_{M_t}^+, v_{M_t}^+)$ for all $s > t$.

It remains to show that $\lim_{t \to \infty} \iota(M_t) = \infty$. Consider again some fixed singularity $x$ on the bottom of $C$, the maximal $L^\infty$ square $S$, and the $\gamma$ of minimal slope with endpoints $x = \overline{S}(0,0)$ and $z = \overline{S}(th-\delta,th)$. Then
\[
\lim_{t \to \infty} \mathrm{slope}(\gamma) = \lim_{t \to \infty} \frac{th-\delta}{th} = 1.
\]

Let $R:(0,th)^2 \to M_t$ be an $L^\infty$-square whose image is contained in $C$, such that $x = \overline{R}(th,0)$. Let $\gamma^-_t \in E(\Delta_{M_t}^-)$ be the saddle connection given by part (\ref{diagonal saddle}) of Lemma \ref{edge computation} for this square $R$. Similarly to $\gamma$, the endpoints of $\gamma^-_t$ are $x = \overline{R}(th,0)$ and $\overline{R}(\delta'_t, th)$, where $0 \leq \delta'_t < w$. Thus $\lim_{t \to \infty} \mathrm{slope}(\gamma'_t) = -1$. The height $th$ of $C$ tends to $\infty$ as $t \to \infty$, while the width remains $w$. Therefore
\[
\lim_{t \to \infty}\iota(\gamma, \gamma^-_t) = \infty.
\]
We conclude that
\[
\lim_{t \to \infty} \iota\left( M_t \right) = \infty.
\]
We have shown the desired result for $M_t = a_t^C \circ u_{t-1}^C(M)$, and reversing the roles of $\Delta^+_M$ and $\Delta^-_M$ gives the desired result for $M_t = a_t^C \circ u_{-(t-1)}^C(M)$, so we are done.
\end{proof}

\begin{proof}[Proof of Proposition \ref{infinite adjacency phenomenon}]
Let $\mathcal D(\Delta, v) = \mathcal D(\Delta_{M_t}^+, v_{M_t}^+)$ as in Lemma \ref{adjacency computation}. If there were only finitely many other  $\mathcal D(\Delta', v')$ such that $\mathcal D(\Delta, v) \cap \mathcal D(\Delta', v') \ne \emptyset$, then we would have $\sup_{\sigma \ni (\Delta, v)} \iota(\sigma) < \infty$. But Lemma \ref{adjacency computation} implies that $\sup_{\sigma \ni (\Delta, v)} \iota(\sigma) = \infty$.
\end{proof}

\setcounter{subsection}{1}
\subsection{Classifying the infinite adjacencies}

In this subsection we prove Theorem \ref{classification result}, our main technical theorem. In Proposition \ref{infinite adjacency phenomenon}, we saw that Delaunay limit triangles intersecting multiple times in a high-modulus horizontal or vertical cylinder give rise to the infinite-adjacency phenomenon. Theorem \ref{classification result} says that this is the only behavior in $\mathcal H_{\mathrm{marked}}(\kappa)$ responsible for the infinite adjacency phenomenon.

Throughout, let $M \in \mathcal H_{\mathrm{marked}}(\kappa)$.

\begin{thm}\label{classification result}
Let $(\Delta, v) \in \mathscr I(\kappa)$. For all but finitely many $\sigma \subset \mathscr I(\kappa)$ with $(\Delta, v) \in \sigma$, every $M \in \mathcal D_\sigma$ has a horizontal or vertical cylinder of modulus greater than $1$.
\end{thm}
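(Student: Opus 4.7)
The plan is to prove the contrapositive first for pairs and then upgrade to arbitrary $\sigma$. First, I would reduce to pairs: if $\sigma \ni (\Delta,v)$ has $\mathcal D_\sigma \neq \emptyset$, then $\sigma$ is finite by local finiteness of $\mathscr D(\kappa)$, and every $(\Delta',v') \in \sigma$ satisfies $\mathcal D(\Delta,v) \cap \mathcal D(\Delta',v') \supseteq \mathcal D_\sigma \neq \emptyset$. Hence it suffices to bound the number of $(\Delta',v') \in \mathscr I(\kappa)$ that admit some $M \in \mathcal D(\Delta,v) \cap \mathcal D(\Delta',v')$ with no horizontal or vertical cylinder of modulus greater than $1$: finiteness of such pairs bounds the allowed elements of any such $\sigma$, and hence bounds the number of such $\sigma$.

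Next, I would invoke the contrapositive of the observation preceding Lemma \ref{tilted triangulations are optimal}: if $M$ has no horizontal or vertical cylinder of modulus greater than $1$, then $\iota(M) \leq 2$. Since both $(\Delta,v)$ and $(\Delta',v')$ are Delaunay limit data for such an $M$, one obtains the uniform intersection bound
\[
\iota(\gamma,\gamma') \leq 2 \quad \text{for every } \gamma \in E(\Delta),\; \gamma' \in E(\Delta').
\]

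The final step deduces the desired finiteness from this intersection bound. It rests on the classical surface-topology fact that, for a fixed triangulation $\Delta$ of $(S_g,\Sigma)$, only finitely many isotopy classes rel $\Sigma$ of arcs with endpoints in $\Sigma$ have intersection number at most $2$ with every edge of $\Delta$. The argument: cutting such an arc $\gamma'$ along $\Delta$ produces at most $2|E(\Delta)|+1$ sub-arcs, each lying in a triangle of $\Delta$ and belonging to one of finitely many isotopy classes of essential sub-arcs, so $\gamma'$ is encoded by a bounded-length word over a finite alphabet. Consequently, there are finitely many candidates for the edges of $\Delta'$, finitely many triangulations $\Delta'$, and for each, finitely many coefficient vectors $v' \in \{-1,1\}^{2|E(\Delta')|}$.

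The main obstacle I anticipate is the bookkeeping around Remark \ref{isotopy class remark}: the geodesic representative of an edge of a Delaunay limit triangulation may pass through points of $\Sigma$ and therefore fail to be isotopic rel $\Sigma$ to the formal arc it represents. One must carefully apply the bound $\iota(\gamma,\gamma') \leq 2$ to the formal isotopy-class representatives of edges of $\Delta$ and $\Delta'$, rather than to possibly-degenerate geodesic representatives. Modulo this subtlety, the argument reduces entirely to the topological finiteness of arc systems with bounded intersection with a fixed triangulation, which is the standard tool underlying many finiteness statements in the theory of mapping class groups.
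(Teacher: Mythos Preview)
Your proposal is correct and follows essentially the same route as the paper. The paper argues directly that all but finitely many $\sigma$ containing $(\Delta,v)$ satisfy $\iota(\sigma)\geq 3$ (Lemma~\ref{intersection bound}, proved via exactly the topological finiteness fact you sketch), and then that $\iota(\sigma)\geq 3$ forces the desired cylinder on every $M\in\mathcal D_\sigma$ (Lemmas~\ref{cylinder without hypothesis}, \ref{controlling degenerate edges}, \ref{modulus bound}); your version takes the contrapositive and front-loads the reduction to pairs, but the ingredients and logic are identical, and the degenerate-edge concern you raise is precisely what Lemma~\ref{controlling degenerate edges} handles.
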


Lemma \ref{cylinder without hypothesis} is our main technical lemma for the proof of Theorem \ref{classification result}.

\begin{lem}\label{cylinder without hypothesis}
Suppose that $M$ has two Delaunay limit triangulations $\Delta_1$, $\Delta_2$ such that there are distinct edges $\gamma_1 \in E(\Delta_1)$ and $\gamma_2 \in E(\Delta_2)$ that are not both horizontal or both vertical, and intersect at least $3$ times. Then there is a horizontal or vertical cylinder $C$ in $M$ on which $\gamma_1$ and $\gamma_2$ lie.
\end{lem}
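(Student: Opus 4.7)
The plan is to pass to the universal cover $\Tilde M$ and leverage the rigid circumsquare structure afforded by Lemma~\ref{limiting circumsquare}. I would fix a lift $\Tilde\gamma_1 \subset \Tilde M$ inscribed in a lift $\Tilde S_1$ of its circumsquare. The three transversal intersections on $M$ then lift to three distinct lifts $\Tilde\gamma_2^{(1)},\Tilde\gamma_2^{(2)},\Tilde\gamma_2^{(3)}$ of $\gamma_2$ meeting $\Tilde\gamma_1$ at points $\Tilde p_1,\Tilde p_2,\Tilde p_3$ in order along $\Tilde\gamma_1$, with each $\Tilde\gamma_2^{(i)}$ inscribed in an axis-aligned lift $\Tilde S_2^{(i)}$ of the circumsquare of $\gamma_2$. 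Because two parallel straight segments in $\Tilde M$ cannot intersect transversally, the slopes of $\gamma_1$ and $\gamma_2$ must differ; and because deck transformations of the translation structure act by flat translations on $\Tilde M$, the squares $\Tilde S_2^{(i)} = \Tilde S_2^{(1)} + \vec w_i$ are related by flat translation vectors $\vec w_i \in \R^2$ with $\vec w_1 = 0$.

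The core of the argument is to extract, from the data of these three intersections, a nontrivial deck translation of $\Tilde M$ in a purely horizontal or purely vertical direction. I would consider the loops on $M$ formed by each arc of $\gamma_1$ between consecutive intersection points together with the corresponding arc of $\gamma_2$; each such loop represents a deck translation whose vector decomposes in terms of the slope-directions $\vec d_1,\vec d_2$ of $\gamma_1,\gamma_2$. Combining these translations appropriately, and invoking the hypothesis that $\gamma_1$ and $\gamma_2$ are not both horizontal or both vertical to preclude degenerate slanted behavior, I would argue that one of the resulting deck translations must lie along a coordinate axis. The no-interior-singularity property of the axis-aligned $\Tilde S_2^{(i)}$, together with the requirement that all three squares meet the common (bounded) transversal $\Tilde\gamma_1$, should force the relevant translation vectors to align with either the horizontal or the vertical axis.

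With a horizontal (or vertical) deck translation $g$ of $\Tilde M$ in hand, its axis projects to a horizontal (vertical) closed geodesic on $M$, which is a core curve of a horizontal (vertical) cylinder $C \subset M$. The strip $\bigcup_{n \in \Z} g^n(\Tilde S_2^{(1)}) \subset \Tilde M$ has no interior singularities (inheriting this from the individual squares) and projects onto $C$; since $\Tilde\gamma_2^{(1)} \subset \Tilde S_2^{(1)}$ lies in this strip, we obtain $\gamma_2 \subset C$, and since $\Tilde\gamma_1$ meets both $\Tilde S_2^{(1)}$ and $\Tilde S_2^{(2)}$, the segment of $\Tilde\gamma_1$ spanning these crossings also lies in the strip, giving $\gamma_1 \subset C$.

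The central obstacle is the second step: rigorously establishing that three transversal intersections, combined with the axis-aligned circumsquare structure, force the appearance of an axis-aligned deck translation. I anticipate this requiring a careful case analysis on the slope signs of $\gamma_1$ and $\gamma_2$, together with a detailed accounting of how three axis-aligned squares of a fixed size can sit in $\Tilde M$ sharing a common transversal while satisfying the no-interior-singularity condition.
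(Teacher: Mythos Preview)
Your outline tracks the paper's setup closely---pass to the universal cover, fix a lift $\Tilde\gamma_1 \subset \Tilde S_1$, and study three lifted circumsquares $\Tilde S_2^{(i)}$ meeting $\Tilde\gamma_1$---but the core of your argument contains a genuine gap and a technical error.

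The technical error is the assertion that ``deck transformations of the translation structure act by flat translations on $\Tilde M$'' and hence $\Tilde S_2^{(i)} = \Tilde S_2^{(1)} + \vec w_i$. This is only true for genus $1$. For $g \geq 2$, the universal cover $\Tilde M$ is not $\R^2$; deck transformations are isometries with translation \emph{holonomy}, but there is no global $\R^2$-structure in which to write $\Tilde S_2^{(i)} = \Tilde S_2^{(1)} + \vec w_i$. Your subsequent ``strip'' $\bigcup_n g^n(\Tilde S_2^{(1)})$ inherits the same problem: you cannot conclude it is a singularity-free Euclidean strip without first knowing the successive $g$-translates overlap in a controlled way, which is essentially what you are trying to prove.

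The genuine gap is the one you yourself flag: you never establish why three intersections force an axis-aligned holonomy vector. The paper fills this gap not via deck-transformation algebra but via a direct configuration analysis of the squares. The key structural input (Lemma~\ref{line segment}) is that whenever $\Tilde\gamma_1$ and $\Tilde\gamma_2^{(i)}$ cross, the boundaries $\partial\Tilde S_1$ and $\partial\Tilde S_2^{(i)}$ must share a \emph{line segment} $\alpha^i$---this follows from a short case check on how two axis-aligned squares can overlap while each carries a chord crossing the other. With three $\alpha^i$ on a four-sided square, either two lie on the same side (and then an explicit horizontal or vertical segment threaded through $\Tilde S_2^{(j)} \cup \Tilde S_1 \cup \Tilde S_2^{(k)}$ closes up in $M$, giving the cylinder; this is Lemma~\ref{cylinder with hypothesis}), or all three lie on different sides, which is ruled out by checking that the unique such configuration prevents the $\Tilde\gamma_2^{(i)}$ from reaching the interior of $\Tilde S_1$. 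Your proposal bypasses this boundary-overlap observation entirely, and without it there is no evident mechanism forcing the holonomy to be axis-aligned.
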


The case where $\gamma_1$ and $\gamma_2$ are either both horizontal or both vertical is treated in Lemma \ref{controlling degenerate edges}. We will prove Lemma \ref{cylinder without hypothesis} by first understanding what happens when $\gamma_1$ and $\gamma_2$ intersect at least 1 time (Lemma \ref{line segment}) and at least 2 times (Lemma \ref{cylinder with hypothesis}).

Note that Lemma \ref{limiting circumsquare} implies that if $\Delta$ is a Delaunay limit triangulation of a translation surface $M$, then every $\gamma \in E(\Delta)$ is inscribed in a maximal $L^\infty$-square.

\begin{defn}
Let $S:(0,h)^2 \to M$ be an $L^\infty$-square. We denote
\[
\dee S \coloneqq \overline{S}(\dee[0,h]^2).
\]
\end{defn}

Following Remark \ref{lifting remark}, we shall make frequent appeal to the universal cover $\Tilde M \twoheadrightarrow M$ in this subsection.

\begin{lem}\label{line segment}
Suppose that a translation surface $M = (X,\omega)$ has Delaunay limit triangulations $\Delta_1$, $\Delta_2$ such that there are distinct edges $\gamma_1 \in E(\Delta_1)$ and $\gamma_2 \in E(\Delta_2)$ that intersect. Let $S_1$ and $S_2$ be maximal $L^\infty$-squares in which $\gamma_1$ and $\gamma_2$ are respectively inscribed. Then the intersection $\dee S_1 \cap \dee S_2$ contains a line segment.
\end{lem}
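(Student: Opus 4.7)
The plan is to lift to the universal cover $\Tilde M \to M$: pick lifts $\Tilde\gamma_1, \Tilde\gamma_2$ meeting at a point $p$ above a geometric intersection point of $\gamma_1$ and $\gamma_2$ in $M$, and choose lifts $\Tilde S_1 \supset \Tilde\gamma_1$ and $\Tilde S_2 \supset \Tilde\gamma_2$ of the given maximal $L^\infty$-squares whose closures contain $p$. When $\Tilde S_1 = \Tilde S_2$ the conclusion is immediate; otherwise, maximality prevents either square from being properly contained in the other, so $R \coloneqq \Tilde S_1 \cap \Tilde S_2$ is a nonempty axis-aligned open rectangle that is a proper subset of each.

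I would then argue by contradiction, assuming $\dee \Tilde S_1 \cap \dee \Tilde S_2$ contains no line segment. In coordinates with $\Tilde S_1 = (0, h_1)^2$ and $\Tilde S_2 = (a, a+h_2) \times (b, b+h_2)$, this hypothesis says that no vertical side-line of $\Tilde S_1$ agrees with one of $\Tilde S_2$, and similarly for horizontal sides. Consequently $\dee R$ decomposes into four sides alternating between the two boundaries---call these the $\Tilde S_1$-sides and $\Tilde S_2$-sides of $\dee R$---and $\dee \Tilde S_1 \cap \dee \Tilde S_2$ reduces to at most the two ``alternation corners'' of $\dee R$.

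The decisive input is the singularity constraint. Because each $\Tilde S_j$ is an $L^\infty$-square with no singularities in its interior, the singularity endpoints of $\Tilde\gamma_1 \subset \dee \Tilde S_1$ must lie in $(\dee \Tilde S_1 \setminus \overline{\Tilde S_2}) \cup (\dee \Tilde S_1 \cap \dee \Tilde S_2)$, and symmetrically for $\Tilde\gamma_2$. In the non-degenerate case where neither $\Tilde\gamma_i$ is horizontal or vertical, $\Tilde\gamma_i$ meets $\dee \Tilde S_i$ only at its endpoints, so the chord $\Tilde\gamma_1 \cap R$ can cross an $\Tilde S_1$-side of $\dee R$ only at an endpoint of $\Tilde\gamma_1$---necessarily at one of the at-most-two corner points of $\dee \Tilde S_1 \cap \dee \Tilde S_2$---and all other crossings of $\dee R$ by $\Tilde\gamma_1$ occur on $\Tilde S_2$-sides. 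The symmetric statement holds for $\Tilde\gamma_2$ with the roles of $\Tilde S_1$ and $\Tilde S_2$ reversed. Thus $\Tilde\gamma_1 \cap R$ and $\Tilde\gamma_2 \cap R$ are anchored to disjoint pairs of opposite sides of $\dee R$, and a short convex-geometry check shows they lie in opposite closed triangles cut off by the diagonal of $R$ joining the two corner points of $\dee \Tilde S_1 \cap \dee \Tilde S_2$. Any interior intersection of $\Tilde\gamma_1$ and $\Tilde\gamma_2$ must therefore lie on that diagonal, and chasing the two chords beyond this point to their endpoints forces $\Tilde\gamma_1 = \Tilde\gamma_2$, contradicting distinctness.

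The main obstacle will be the degenerate possibility that $\Tilde\gamma_i$ runs along a side of its ambient square, since then the chord meets $\dee \Tilde S_i$ at interior points and the ``exit-only-at-endpoints'' argument above breaks. I would handle this case directly: a degeneration of this type means $\Tilde\gamma_i$ is a horizontal or vertical segment carried on a side of $\Tilde S_i$, and the requirement $\Tilde\gamma_i \subset \overline{\Tilde S_j}$ (needed for $\Tilde\gamma_i$ to pass through $p \in \overline{\Tilde S_j}$) forces the corresponding side of $\Tilde S_j$ to lie on the same line as that side of $\Tilde S_i$. This coincidence of side-lines over the length of $\Tilde\gamma_i$ directly exhibits the desired line segment in $\dee \Tilde S_1 \cap \dee \Tilde S_2$, so the conclusion holds in the degenerate case as well.
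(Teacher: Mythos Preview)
Your strategy is essentially the paper's: lift to $\Tilde M$, study the overlap $R = \Tilde S_1 \cap \Tilde S_2$, use that singularities (hence the endpoints of each $\Tilde\gamma_j$) must lie in $\partial\Tilde S_j \setminus \Tilde S_{3-j}$, and deduce that if $\partial\Tilde S_1 \cap \partial\Tilde S_2$ contains no segment then $\Tilde\gamma_1 = \Tilde\gamma_2$. The paper packages this as a convex-hull statement after enumerating the configurations of $\Tilde S_1 \cup \Tilde S_2$: in the no-segment configurations, $\operatorname{conv}(\partial\Tilde S_1 \setminus \Tilde S_2)$ and $\operatorname{conv}(\partial\Tilde S_2 \setminus \Tilde S_1)$ meet in a single segment, which is precisely your ``diagonal of $R$''.

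There are two concrete gaps in your write-up, however. First, your picture of $\partial R$ is wrong: you claim the four sides \emph{alternate} and that the chords are anchored to ``pairs of opposite sides'', but because the $\Tilde S_j$ are squares the cross-overlap is impossible (if, say, the left and right sides of $R$ came from $\Tilde S_1$ while top and bottom came from $\Tilde S_2$, comparing widths gives $h_2 > h_1$ and comparing heights gives $h_2 < h_1$). The only no-segment configuration is a \emph{corner overlap}, in which each square contributes two \emph{adjacent} sides of $R$; your ``opposite triangles'' conclusion happens to be correct there, but it does not follow from the opposite-sides picture you actually describe. Second, your degenerate-case argument is broken: $p \in \overline{\Tilde S_j}$ in no way implies $\Tilde\gamma_i \subset \overline{\Tilde S_j}$. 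No separate treatment is needed anyway---even when $\Tilde\gamma_i$ lies along a side of $\Tilde S_i$, its endpoints are singularities and hence lie in $\partial\Tilde S_i \setminus \Tilde S_j$, so the convex-hull (equivalently, opposite-triangle) containment still holds and still forces any interior intersection point onto the diagonal.
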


\begin{proof}
Let us lift the $\gamma_j$ to intersecting saddle connections $\Tilde \gamma_j$ on the universal cover $\Tilde M = (\Tilde X, \Tilde \omega)$ of $M$. Let us also lift each $S_j$ to $\Tilde M$ so that $\gamma_j$ is inscribed in $S_j$. Up to re-indexing, rotation, and reflection, the union of $\Tilde{S_1}$ and $\Tilde{S_2}$ is in one of the four forms shown in Figure \ref{four square configurations} (cf. \cite{gueritaud}, Figure 5).

\begin{figure}
\begin{center}
\includegraphics[scale=0.365]{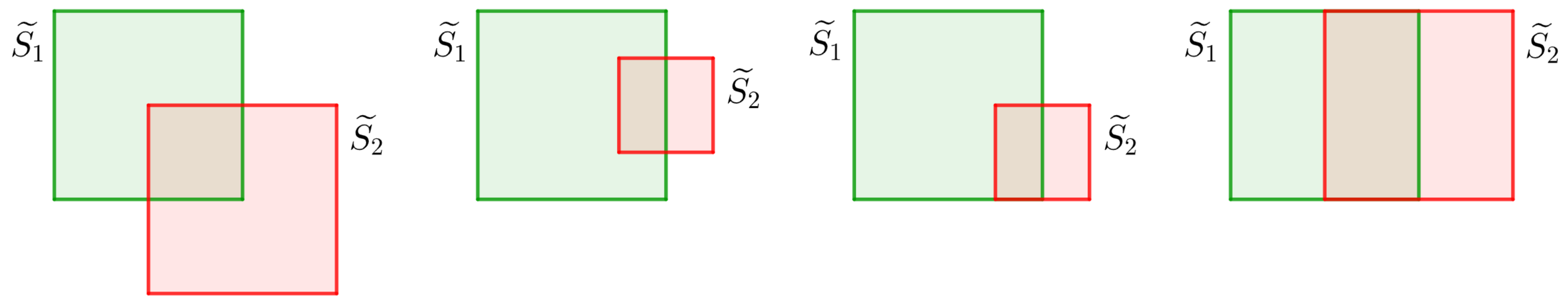}
\end{center}
\caption{The four possible configurations of $\Tilde S_1$ and $\Tilde S_2$}
\label{four square configurations}
\end{figure}

Note that in the latter two forms, the intersection $\dee\Tilde{S_1} \cap \dee\Tilde{S_2}$ contains a line segment.  Since $\Tilde S_j^*\Tilde \omega = dz$, singularities of $\Tilde S_j$ may only appear on $\dee \Tilde S_j \setminus \Tilde{S_{j\pm1}}$. In the former two forms, the convex hulls of $\dee \Tilde S_1 \setminus \Tilde{S_2}$ and of $\dee \Tilde{S_2} \setminus \Tilde S_1$ intersect in a single line segment, contradicting the fact that $\Tilde \gamma_1$ and $\Tilde \gamma_2$ are distinct. We conclude that we must be in one of the latter two situations, and hence the intersection $\dee \Tilde S_1 \cap \dee \Tilde{S_2}$ contains a line segment. Returning to $M$ via the covering map, we conclude that the intersection $\dee S_1 \cap \dee S_2$ contains a line segment.
\end{proof}

\begin{lem}\label{cylinder with hypothesis}
Suppose that $M$ has Delaunay limit triangulations $\Delta_1$, $\Delta_2$ such that there are distinct edges $\gamma_1 \in E(\Delta_1)$ and $\gamma_2 \in E(\Delta_2)$ that are not both horizontal or both vertical, and intersect at least $2$ times.

Again for $j=1,2$, let $S_j$ be a maximal $L^\infty$-square in which $\gamma_j$ is inscribed. Suppose without loss of generality that $\mathrm{height}(S_1) \geq \mathrm{height}(S_2)$. Let $\Tilde{\gamma_1}$ be a lift of $\gamma_1$ to the universal cover $\Tilde M \twoheadrightarrow M$, and let $\Tilde{S_1}$ be a lift of $S_1$ in which $\Tilde{\gamma_1}$ is inscribed. Now let $\Tilde{S_2^1}$ and $\Tilde{S_2^2}$ be  two distinct lifts of $S_2$, so that there are two distinct lifts $\Tilde{\gamma_2^1}$ and $\Tilde{\gamma_2^2}$ of $\gamma_2$, with each $\Tilde{\gamma_2^i}$ inscribed in $\Tilde{S_2^i}$, and so that each $\Tilde{\gamma_2^i}$ intersects $\Tilde \gamma_1$. By Lemma \ref{line segment}, each $\dee\Tilde{S_2^i} \cap \dee\Tilde S_1$ contains a line segment $\alpha^i$.

If $\alpha^1$ and $\alpha^2$ lie on the same side of $\Tilde S_1$, then there is a horizontal or vertical cylinder $C$ in $M$ on which $\gamma_1$ and $\gamma_2$ lie.

\end{lem}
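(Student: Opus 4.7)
The plan is to detect a horizontal periodicity in $M$ from the two lifts $\Tilde{S_2^1}, \Tilde{S_2^2}$ and use it to foliate both squares by closed horizontal trajectories that assemble into a single horizontal cylinder. First I would apply a $90^\circ$ rotation if the shared side of $\Tilde S_1$ is vertical, so that without loss of generality $\alpha^1$ and $\alpha^2$ lie on the top side of $\Tilde S_1$; in developing coordinates on $\Tilde M$ arranged so $\Tilde S_1 = (0, h_1)^2$ with $h_1 = \hei(S_1)$, the squares $\Tilde{S_2^1}$ and $\Tilde{S_2^2}$ each sit above $\Tilde S_1$ with their bottom edges on the horizontal line $\{y = h_1\}$.

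Next I would extract a horizontal translation from a deck transformation. Since $\Tilde{S_2^1} \ne \Tilde{S_2^2}$ are two distinct lifts of the same $L^\infty$-square $S_2 \subset M$, there exists a nontrivial deck transformation $g$ of the cover $p: \Tilde M \twoheadrightarrow M$ with $g(\Tilde{S_2^1}) = \Tilde{S_2^2}$. Because $\Tilde \omega = p^* \omega$ is $g$-invariant, $g$ must act on the developing coordinates as a pure translation $z \mapsto z + c$. The fact that $g$ sends the bottom of $\Tilde{S_2^1}$ to that of $\Tilde{S_2^2}$, both of which lie on the line $\{y = h_1\}$, forces $\Im(c) = 0$, so $g$ is horizontal translation by some $(w, 0)$ with $w \ne 0$.

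Finally I would build the cylinder from this periodicity. Every horizontal line in the developing plane is $g$-invariant setwise, so for each $y_0 \in (0, h_1)$ the horizontal line through the singularity-free interior of $\Tilde S_1$ at height $y_0$ projects to a closed horizontal trajectory in $M$ of length dividing $|w|$; letting $y_0$ vary foliates $S_1$ by parallel closed horizontal trajectories, placing $S_1$ inside a maximal horizontal cylinder $C$ of $M$. The same argument applied to $\Tilde{S_2^1}$ places $S_2$ inside a horizontal cylinder $C'$. The hard part will be showing $C = C'$ rather than two distinct parallel cylinders split apart by singularities lying on the shared boundary segment $\alpha^1$. I would resolve this by picking any $\Tilde p \in \Tilde\gamma_1 \cap \Tilde{\gamma_2^1}$: because each $\Tilde\gamma_j$ is inscribed in its square with endpoints on the boundary, $\Tilde p$ lies in the open interior of both $\Tilde S_1$ and $\Tilde{S_2^1}$, so the horizontal trajectory through $p(\Tilde p) \in M$ is a closed curve lying in both $C$ and $C'$ and must be a core curve of each, forcing $C = C'$. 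Then $\gamma_1 \subset \overline{S_1} \subset \overline C$ and $\gamma_2 \subset \overline{S_2} \subset \overline C$, so $C$ is the horizontal cylinder demanded by the lemma.
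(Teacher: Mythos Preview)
Your proposal has the right instinct—extract a horizontal periodicity from the deck transformation carrying $\Tilde{S_2^1}$ to $\Tilde{S_2^2}$—but the execution contains a geometric error and a genuine logical gap.

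\textbf{The geometric setup is wrong.} You place $\Tilde{S_2^1},\Tilde{S_2^2}$ \emph{above} $\Tilde{S_1}$ with their bottom edges on $\{y=h_1\}$. That cannot happen: if $\Tilde{S_2^i}$ lay entirely above $\{y=h_1\}$ then $\Tilde\gamma_2^i\subset\overline{\Tilde{S_2^i}}$ and $\Tilde\gamma_1\subset\overline{\Tilde{S_1}}$ could meet only on $\{y=h_1\}$, hence only at endpoints of the inscribed segments, which are singularities and do not count. Since $\Tilde\gamma_1$ and $\Tilde\gamma_2^i$ do intersect in their interiors, the interiors of $\Tilde{S_1}$ and $\Tilde{S_2^i}$ must overlap; the correct picture (up to reflection) has the \emph{tops} of the $\Tilde{S_2^i}$ aligned with the top of $\Tilde{S_1}$ while the squares protrude sideways. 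The paper's two cases record whether the $\Tilde{S_2^i}$ protrude out the same corner or opposite corners.

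\textbf{The key inference fails.} Even granting $\rho(g)\in\R$, the step ``every horizontal line in the developing plane is $g$-invariant, so the horizontal leaf through $\Tilde{S_1}$ projects to a closed trajectory'' is not valid. The developing map $D:\Tilde M\to\C$ is not injective once $g\geq 1$, so $g$-invariance of horizontal lines in $\C$ does not imply $g$-invariance of horizontal leaves in $\Tilde M$; indeed a translation surface can have a minimal horizontal foliation and still carry loops with real period. What you actually need is a horizontal segment \emph{in $\Tilde M$} whose endpoints are $g$-related. The paper obtains this precisely from the overlap you discarded: because the three squares form an overlapping chain in $\Tilde M$, one can draw an embedded horizontal segment $\Tilde\eta\subset\Tilde{S_2^1}\cup\Tilde{S_1}\cup\Tilde{S_2^2}$ joining corresponding points of $\Tilde{S_2^1}$ and $\Tilde{S_2^2}$, and this projects to a closed core curve. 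Your final paragraph actually rediscovers this overlap (the point $\Tilde p\in\Tilde\gamma_1\cap\Tilde\gamma_2^1$ lies in both open squares), contradicting your own setup. Once the overlap is used correctly, one still owes an argument that the resulting cylinder swallows all of $S_1$ and $S_2$; the paper supplies this via a width comparison in Case~1 and by first forcing $\hei(S_1)=\hei(S_2)$ in Case~2.
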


\begin{proof}
Suppose without loss of generality that $\alpha^1$ and $\alpha^2$ lie on the bottom side of $\Tilde S_1$. Since each $\Tilde{S_2^i} \not \subset \Tilde{S_1}$, each $\alpha^i$ meets the bottom left or right vertex of $\Tilde{S_1}$. We have two cases.
\smallskip

\noindent\textbf{Case 1:} Without loss of generality, $\alpha^1$ and $\alpha^2$ both meet the bottom left vertex of $\Tilde{S_1}$. See Figure \ref{2 intersections case 1}.

\begin{figure}
\begin{center}
\includegraphics[scale=1.05]{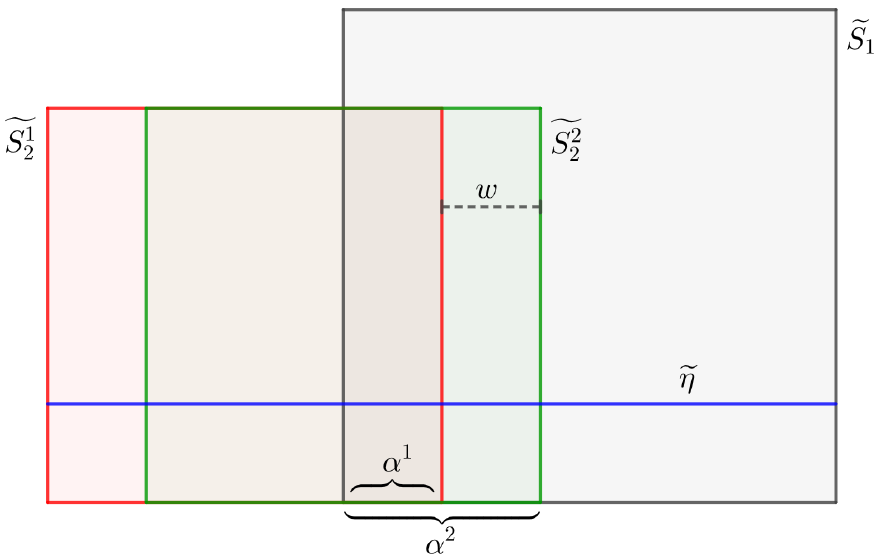}
\end{center}
\caption{Case 1 of Lemma \ref{cylinder with hypothesis}}
\label{2 intersections case 1}
\end{figure}

Let us consider a horizontal line segment $\Tilde \eta$ in $\Tilde{S_2^1} \cup \Tilde{S_1} \cup \Tilde{S_2^2}$. The segment $\Tilde \eta$ must project to a closed loop $\eta$ in $M$; in particular, the intersection points of $\Tilde \eta$ with the right-hand sides of $\Tilde S_2^1$ and $\Tilde S_2^2$ must project to the same point of $M$. Therefore $\eta$ is a core curve of some cylinder $C$ in $M$. Let $w$ be the distance between the right-hand sides of the $\Tilde{S_2^i}$'s. Observe that $\mathrm{width}(C) \leq w < \mathrm{width}(S_1)$. Therefore $\mathrm{height}(C) = \mathrm{height}(S_1) = \mathrm{height}(S_2)$, for otherwise each singularity on the top side of $C$ would lift to a point on the interior of $\Tilde{S_1}$. Therefore the images of $S_1$ and $S_2$ are contained in $C$, and so $\gamma_1$ and $\gamma_2$ lie on $C$, as desired.
\smallskip

\noindent\textbf{Case 2:} Without loss of generality, $\alpha^1$ meets the bottom left vertex of $\Tilde{S_1}$, and $\alpha^2$ meets the bottom right vertex. See Figure \ref{2 intersections case 2}.

We will show that $\mathrm{height}(S_1) = \mathrm{height}(S_2)$. Then we will be done, because forming a horizontal line segment $\Tilde \eta$ in $\Tilde{S_2^1} \cup \Tilde{S_1} \cup \Tilde{S_2^2}$ again produces a cylinder $C$ in $M$, and this cylinder must contain the images of $S_1$ and $S_2$ because these squares have the same height and contain no singularities on their interiors. Suppose that $\mathrm{height}(S_1) > \mathrm{height}(S_2)$.

\begin{figure}
\begin{center}
\includegraphics[scale=1.05]{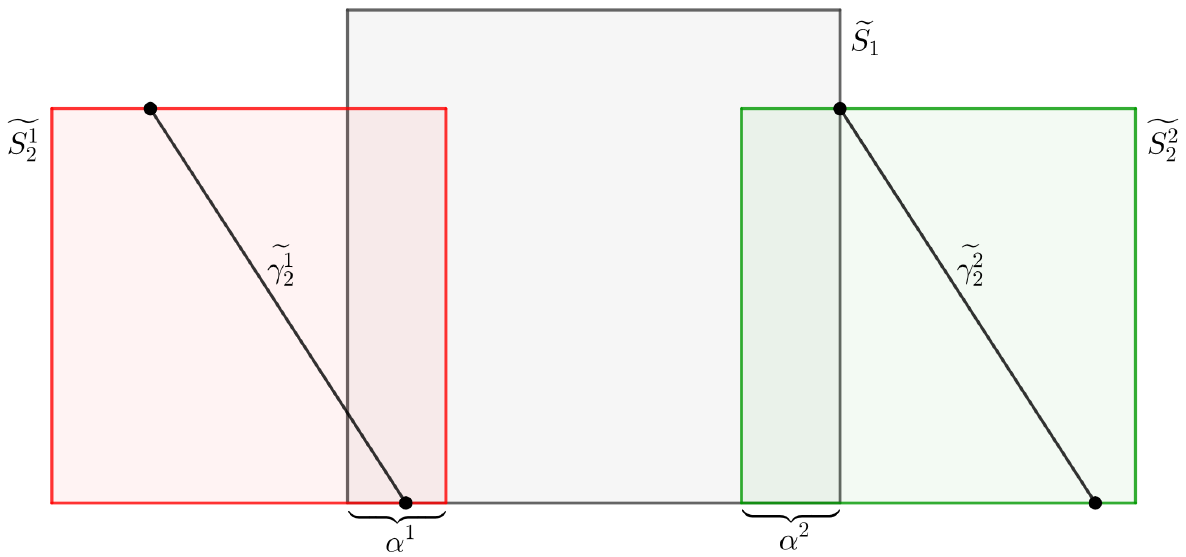}
\end{center}
\caption{Case 2 of Lemma \ref{cylinder with hypothesis}}
\label{2 intersections case 2}
\end{figure}

Observe that $\gamma_2$ can be neither horizontal nor vertical, because then it would not meet the interior of $S_1$, and hence not be able to intersect $\gamma_1$. Without loss of generality, assume $\gamma_2$ has negative slope. But then $\Tilde \gamma_2^2$ cannot meet the interior of $\Tilde{S_1}$, because $\Tilde{S_1}$ can have no singularities on its interior. This contradicts the fact that each lift of $\gamma_2$ intersects $\Tilde{\gamma_1} \subset \Tilde{S_1}$, and so we conclude that $\mathrm{height}(S_1) = \mathrm{height}(S_2)$.
\end{proof}

\begin{proof}[Proof of Lemma \ref{cylinder without hypothesis}]
We begin as in the hypothesis of Lemma \ref{cylinder with hypothesis}. For $j=1,2$, let $S_j$ be a maximal $L^\infty$-square in which $\gamma_j$ is inscribed. Suppose without loss of generality that $\mathrm{height}(S_1) \geq \mathrm{height}(S_2)$.

Let $\Tilde{\gamma_1}$ be a lift of $\gamma_1$ to the universal cover $\Tilde M \twoheadrightarrow M$, and let $\Tilde{S_1}$ be a lift of $S_1$ in which $\Tilde{\gamma_1}$ is inscribed. Now let $\Tilde {S_2^1}$, $\Tilde{S_2^2}$, and $\Tilde{S_2^3}$ be three distinct lifts of $S_2$, so that there are three distinct lifts $\Tilde{\gamma_2^1}$, $\Tilde{\gamma_2^2}$ and $\Tilde{\gamma_2^3}$ of $\gamma_2$, with each $\Tilde{\gamma_2^i}$ inscribed in $\Tilde{S_2^i}$, and so that each $\Tilde{\gamma_2^i}$ intersects $\Tilde \gamma_1$. By Lemma \ref{line segment}, each $\dee\Tilde{S_2^i} \cap \dee\Tilde S_1$ contains a line segment $\alpha^i$. By Lemma \ref{cylinder with hypothesis}, we are done if we can show that two of the $\alpha^i$ lie on the same side of $\Tilde{S_1}$. If $\mathrm{height}(S_1) = \mathrm{height}(S_2)$, then this is immediate.

Suppose $\mathrm{height}(S_1) > \mathrm{height}(S_2)$, and suppose for the sake of contradiction that each $\alpha^i$ lies on a different side of $\Tilde{S_1}$. If three distinct sides of $S_2$  lie in the interior of $S_1$, then $S_2$ can only have singularities on one of its sides, and hence $\gamma_2$ cannot meet the interior of $S_1$, which is necessary for $\gamma_2$ to intersect $\gamma_1$. Up to rotation and reflection, there is only one configuration of the four squares $\Tilde{S_1}$, $\Tilde{S_2^i}$ so that only two sides of $S_2$ lie on the interior of $S_1$, as shown in Figure \ref{3 intersections}.

\begin{figure}
\begin{center}
\includegraphics[scale=1.2]{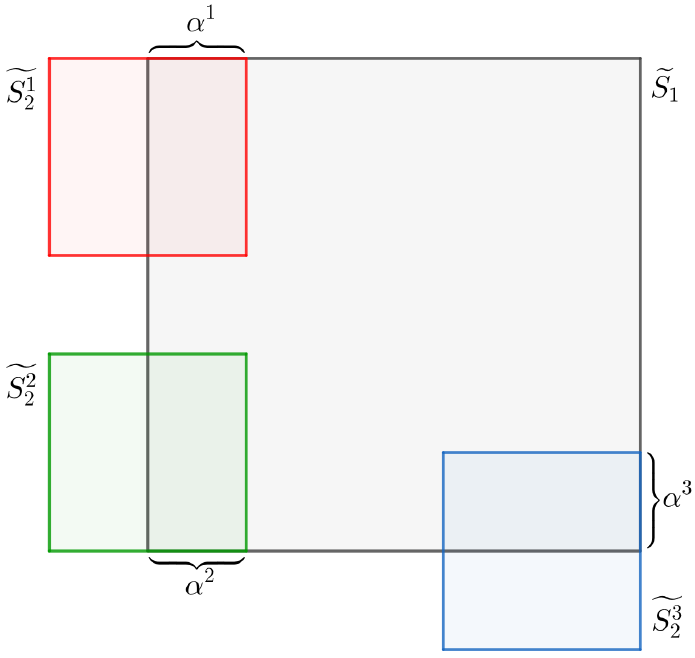}
\end{center}
\caption{Unique configuration with $\alpha^i$ on different sides and with two sides of $S_2$ in $S_1$}
\label{3 intersections}
\end{figure}

In this configuration, the fact that the interior of an $L^\infty$-square contains no singularities constrains the endpoints of each $\Tilde{\gamma_2^i}$ to lie in the bottom left quarter of $\dee\Tilde{S_2^i}$, and hence they cannot meet the interior of $\Tilde{S_1}$, a contradiction. We conclude that two of the $\alpha^i$ must lie on the same side of $\Tilde{S_1}$, and so we are done.
\end{proof}

\begin{lem}\label{controlling degenerate edges}
Suppose that $M$ has Delaunay limit triangulations $\Delta_1$, $\Delta_2$ such that there are distinct edges $\gamma_1 \in E(\Delta_1)$ and $\gamma_2 \in E(\Delta_2)$ that intersect, and are either both horizontal or both vertical. Then $\iota(\gamma_1, \gamma_2) \leq 2$.
\end{lem}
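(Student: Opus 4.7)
The plan is to first observe that, since $\gamma_1$ and $\gamma_2$ are both horizontal (the vertical case being symmetric) and $\iota(\gamma_1, \gamma_2) \geq 1$ requires their geodesic representatives to meet as subsets of $M$, and since distinct horizontal leaves of the horizontal foliation are disjoint, these geodesic representatives must be subsegments of a common maximal horizontal saddle-connection chain $\tau$ in $M$. This reduces the problem to a combinatorial analysis along $\tau$.

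Next I would pass to the universal cover: fix a lift $\Tilde\gamma_1$ lying on a horizontal line $L \subset \Tilde M$ and count the distinct lifts $\Tilde\gamma_2^{(i)}$ of $\gamma_2$ whose topological arc classes have an essential intersection with $\Tilde\gamma_1$. Each such lift must itself lie on $L$, since any horizontal lift of $\gamma_2$ on a different horizontal line is geometrically disjoint from $\Tilde\gamma_1$, and hence can be isotoped away. By Lemma \ref{limiting circumsquare}, each $\gamma_j$ is inscribed in a maximal $L^\infty$-square $S_j$, and because $\gamma_j$ is horizontal, the lifted square $\Tilde S_j$ has $\Tilde\gamma_j$ on a horizontal side, with its singularity-free open interior lying entirely above or entirely below $L$.

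The key step is to bound the number of such essentially intersecting lifts by $2$. My plan is a case analysis on whether each $\Tilde S_2^{(i)}$ sits above or below $L$, arguing that given three distinct lifts $\Tilde\gamma_2^{(1)}, \Tilde\gamma_2^{(2)}, \Tilde\gamma_2^{(3)}$ whose bottom or top edges overlap the subsegment of $L$ underlying $\Tilde\gamma_1$, the maximality of the $\Tilde S_2^{(i)}$ (which forces singularities to abut each of their sides) together with the singularity-freeness of the open interior of $\Tilde S_1$ yields a contradiction, analogous in spirit to the configuration analysis in the proof of Lemma \ref{cylinder with hypothesis}. Intuitively, only the two endpoints of $\Tilde\gamma_1$ on $L$ can host an essential adjacency with a distinct lift of $\gamma_2$, giving at most two contributions.

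The hard part will be correctly accounting for the dodging patterns at intermediate singularities in the geodesic realizations: by Remark \ref{isotopy class remark}, the isotopy class of $\gamma_j$ may differ from its geodesic realization by a choice of side (above or below $\tau$) at each interior singularity of the realization, and these choices are pinned down by the topological triangulation $\Delta_j$. I expect that the $L^\infty$-Delaunay structure imposes enough compatibility on these dodging patterns that the interior mismatches cancel in pairs, so that the only surviving contributions to $\iota(\gamma_1, \gamma_2)$ come from the two endpoints of $\Tilde\gamma_1$, yielding the bound $\iota(\gamma_1, \gamma_2) \leq 2$.
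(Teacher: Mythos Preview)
Your proposal has a genuine gap. You correctly identify the crux as handling the ``dodging patterns at intermediate singularities,'' but you leave this as an expectation (``I expect that the $L^\infty$-Delaunay structure imposes enough compatibility \dots'') rather than an argument. More importantly, your framing of the difficulty rests on a misconception: you speak of ``a choice of side (above or below $\tau$) at each interior singularity,'' as if these choices could vary along the arc. In fact you have already recorded the key observation without using it: the circumsquare $\Tilde S_j$ has its open interior entirely on one side of $L$, so the isotopy class of $\gamma_j$ is obtained by pushing the \emph{entire} horizontal representative $g_j$ uniformly into that interior. There is a single global ``flexible direction'' for each $\gamma_j$, not a per-singularity pattern that must cancel in pairs, and your hoped-for picture of ``contributions from the two endpoints of $\Tilde\gamma_1$'' is not how the bound of $2$ arises.

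The paper's proof exploits exactly this simplification. If the two flexible directions are opposite, pushing each $g_j$ into its own $S_j$ separates the arcs and gives $\iota(\gamma_1,\gamma_2)=0$. If the directions agree, the paper chooses explicit representatives: shallow circular arcs $c_j$ on the common side of $L$, with $c_2$ (corresponding to the longer $g_j$) taken taller and of greater curvature than $c_1$, and argues directly that such a pair cannot form a bigon and hence meets in at most two points. Your plan to imitate the square-configuration analysis of Lemma~\ref{cylinder with hypothesis} does not transfer here: that argument relies on the $\Tilde\gamma_2^i$ being transverse to the sides of $\Tilde S_1$, which forces their endpoints into specific subarcs of $\partial\Tilde S_1$, and this is precisely what fails when both edges are horizontal.
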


\begin{proof}
Suppose without loss of generality that $\gamma_1$ and $\gamma_2$ are horizontal, and let $S_1$ and $S_2$ be $L^\infty$-squares in which $\gamma_1$ and $\gamma_2$ are respectively inscribed. For the sake of clarity, let $g_1$ and $g_2$ denote these horizontal representatives of the topological arcs $\gamma_j \in E(\Delta_j)$. Recall from Remark \ref{isotopy class remark} that since the $g_j$ may contain points of $\Sigma$, they may not be isotopic rel $\Sigma$ to the arcs $\gamma_j$. Since the $g_j$ arise via limits of $L^\infty$-triangle edges, there is a direction (up or down) in which $g_j$ may be homotoped, rel endpoints, to an arc $c_j$ in the interior of $S_j$, so that $c_j$ is isotopic rel $\Sigma$ to $\gamma_j$. For each $j$, call this direction the \emph{flexible direction} for $g_j$. We have two cases. Either the flexible directions for $g_1$ and $g_2$ are the same or different. If they are different, then homotoping them into the interior of their respective $S_j$ demonstrates that $\iota(\gamma_1, \gamma_2) = 0$.

Now suppose that the $g_j$ have the same flexible direction. Suppose without loss of generality that $\mathrm{length}(g_2) \geq \mathrm{length}(g_1)$. Then there is some $0 < \e \ll 1$ such that each $g_j$ can be homotoped to a circular arc $c_j$ of height less than $\e$ in the interior of $S_j$. Let us say that the \emph{curvature} of $c_j$ the inverse of the radius of the circle of which it is an arc.

We may choose the arcs $c_j$ such that the length of $g_j$ is smaller than the diameter of the circle of which $c_j$ is an arc. We may further suppose that $c_2$ is taller than $c_1$, and also has greater curvature. See Figure \ref{circular arcs}.
\begin{figure}
\begin{center}
\includegraphics[scale=3]{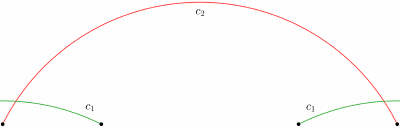}
\end{center}
\caption{The greatest possible number of intersections between $c_1$ and $c_2$}
\label{circular arcs}
\end{figure}
Since $c_1$ and $c_2$ have different heights, if these arcs meet, then they meet transversely. If these arcs meet more than 2 times, then they must form a bigon. However, a wide, tall circular arc cannot form a bigon with a short, squat circular arc of lesser curvature. Thus $c_1$ and $c_2$ cannot form a bigon. Therefore $\iota(\gamma_1,\gamma_2)$ is equal to the number of intersection points of these circular arcs, and we conclude that $\iota(\gamma_1, \gamma_2) \leq 2$.
\end{proof}

\begin{lem}\label{modulus bound}
Let $M$ be a translation surface, and let $\Delta_1$ and $\Delta_2$ be two Delaunay limit triangulations of $M$. Suppose there are edges $\gamma_1 \in E(\Delta_1)$ and $\gamma_2 \in E(\Delta_2)$ such that $\iota(\gamma_1, \gamma_2) \geq 3$, so that they lie in a horizontal or vertical cylinder $C$. Then
\[
\mathrm{mod}(C) > \frac{\iota(\gamma_1,\gamma_2)-1}{2}
\]
\end{lem}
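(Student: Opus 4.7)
The plan is to apply Lemma \ref{displacement formula} to both $\gamma_1$ and $\gamma_2$, bounding the horizontal spacing between consecutive intersection points on their respective lifts in $\Tilde M$, and then to combine the two resulting bounds using the triangle inequality on slopes. The key geometric input is Lemma \ref{limiting circumsquare}: as an edge of a Delaunay limit triangulation, each $\gamma_i$ is inscribed in a maximal $L^\infty$-square of height at most $h := \hei(C)$, so any lift $\Tilde{\gamma_i} \subset \Tilde M$ has horizontal extent at most $h$.

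Without loss of generality assume $C$ is horizontal, and set $w = \width(C)$, $k = \iota(\gamma_1,\gamma_2) \geq 3$, and $s_i = \slope(\gamma_i)$. The hypothesis $k \geq 3$ together with Lemma \ref{controlling degenerate edges} rules out both edges being horizontal or both vertical, and I would dispatch the remaining degenerate configurations by hand: for example, if $\gamma_1$ is vertical, successive lifts of $\gamma_1$ in $\Tilde M$ lie at horizontal separation exactly $w$, forcing $(k-1)w < h$ and yielding the stronger bound $\modulus(C) > k-1$. I can therefore assume $s_1, s_2$ are both finite and nonzero.

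Now lift $\gamma_1$ to a segment $\Tilde{\gamma_1} \subset \Tilde M$ and let $z_1,\dots,z_k$ be the preimages on $\Tilde{\gamma_1}$ of the points of $\gamma_1 \cap \gamma_2$, ordered along $\Tilde{\gamma_1}$. By Lemma \ref{displacement formula}, consecutive $z_i$ are horizontally separated by $|s_2 w/(s_1 - s_2)|$. The endpoints of $\Tilde{\gamma_1}$ project to singularities and hence are distinct from the transverse intersections $z_i$, so the $z_i$ lie strictly in the interior of $\Tilde{\gamma_1}$, whose horizontal extent is at most $h$. Applying the symmetric argument to a lift of $\gamma_2$ yields the pair of strict inequalities
\[
(k-1)\,\frac{|s_2|\,w}{|s_1 - s_2|} < h, \qquad (k-1)\,\frac{|s_1|\,w}{|s_1 - s_2|} < h.
\]
Adding them and using the triangle inequality $|s_1 - s_2| \leq |s_1| + |s_2|$ gives $(k-1)w < 2h$, i.e., $\modulus(C) > (k-1)/2$, as desired.

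The main obstacle is justifying the strictness of the bound on the horizontal extent of $\Tilde{\gamma_1}$: ensuring $z_1,z_k$ lie strictly inside $\Tilde{\gamma_1}$ relies on the fact that endpoints of Delaunay limit edges are singularities and so cannot coincide with transverse intersections in the interior of $C$. Once this is in hand, cleanly handling the degenerate slope cases is routine and the triangle-inequality step is a one-line calculation.
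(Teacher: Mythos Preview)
Your proof is correct and reaches the same conclusion as the paper, but the middle step is organized differently. The paper observes directly that since each $\gamma_i$ is inscribed in an $L^\infty$-square with endpoints on the top and bottom sides, $|\slope(\gamma_i)| \geq 1$; from this it deduces (via Lemma~\ref{displacement formula}, applied to a single lift) that consecutive intersection points are horizontally separated by at least $w/2$, and then uses $(k-1)\tfrac{w}{2} < \text{width}(\Tilde\gamma_1) \leq h$. Your argument instead applies the displacement formula to lifts of \emph{both} $\gamma_1$ and $\gamma_2$, obtaining the pair of bounds $(k-1)|s_2|w/|s_1-s_2| < h$ and $(k-1)|s_1|w/|s_1-s_2| < h$, and then adds them together with the triangle inequality $|s_1-s_2| \leq |s_1|+|s_2|$.

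The paper's route is shorter but tacitly requires a choice: the bound ``horizontal distance $\geq w/2$'' from $|s_i|\geq 1$ only follows when one lifts the edge of smaller absolute slope (e.g.\ if $s_1=100$, $s_2=1$, lifting $\gamma_1$ gives spacing $\approx w/99$). Your symmetrization sidesteps this choice entirely and does not need the observation $|s_i|\geq 1$, at the modest cost of handling the vertical/horizontal edge cases separately. Both arguments rely on the same strictness mechanism you identified, namely that the $z_i$ lie in the interior of the lifted segment.
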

\begin{proof}
Suppose without loss of generality that $C$ is horizontal, and let $w$ and $h$ be its width and height, respectively. Since $\gamma_1$ and $\gamma_2$ are straight line segments inscribed in squares with their endpoints lying on the top and bottom sides of the squares, they must have slope of magnitude no less than 1. Letting $\Tilde{\gamma_1}$ and $z_i$ be as in Lemma \ref{displacement formula}, it follows from that lemma that the horizontal distance from each $z_i$ to $z_{i+1}$ is at least $\tfrac w2$. There are $k-1$ consecutive pairs $(z_i,z_{i+1})$ and so $(k-1)\tfrac w2 < \mathrm{width}(\Tilde{\gamma_1}) \leq h$. The first inequality is strict because no $z_i$ is an endpoint of $\Tilde{\gamma_1}$. Therefore $\mathrm{mod}(C) = \tfrac hw > \tfrac{k-1}{2}$.
\end{proof}

\begin{lem}\label{intersection bound}
Let $K \geq 2$ be an integer, and let $(\Delta, v) \in \mathscr I(\kappa)$. For all but finitely many $\sigma \subset \mathscr I(\kappa)$ with $(\Delta, v) \in \sigma$, we have $\iota(\sigma) \geq K$.
\end{lem}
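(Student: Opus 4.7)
The plan is to establish the contrapositive: only finitely many $\sigma \subset \mathscr I(\kappa)$ containing $(\Delta, v)$ can satisfy $\iota(\sigma) < K$. The key idea is that every triangulation appearing in such a $\sigma$ is forced to have edges with bounded intersection number against each edge of $\Delta$, and only finitely many such triangulations exist up to isotopy rel $\Sigma$.

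First I would observe that if $(\Delta', v') \in \sigma$ and $\iota(\sigma) < K$, then instantiating the maximum in the definition of $\iota(\sigma)$ with the pair $(\Delta, v), (\Delta', v') \in \sigma$ gives $\iota(\gamma, \gamma') < K$ for every $\gamma \in E(\Delta)$ and every $\gamma' \in E(\Delta')$. Thus it suffices to bound the number of $(\Delta', v') \in \mathscr I(\kappa)$ each of whose edges has intersection number at most $K-1$ with every edge of $\Delta$.

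Next I would invoke the standard surface-topological fact that an isotopy class, rel $\Sigma$, of an arc on $S_g$ with endpoints in $\Sigma$ and interior in $S_g \setminus \Sigma$ that crosses each edge of $\Delta$ at most $K-1$ times is determined by finite combinatorial data: the multiset of edge-crossings and the pattern of disjoint strands it traces within each triangle of $\Delta$. Since $\Delta$ cuts $S_g$ into finitely many triangular disks and each disk admits only finitely many strand-patterns with a bounded number of boundary crossings, there are only finitely many such arc isotopy classes. Euler's formula forces every triangulation of $(S_g, \Sigma)$ with vertex set $\Sigma$ to have exactly $3n + 6g - 6$ edges, so only finitely many $\Delta'$ can be assembled from such arcs, and for each such $\Delta'$ the set of admissible coefficient vectors $v' \in \{-1,1\}^{2|E(\Delta')|}$ is finite.

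Combining these, only finitely many $(\Delta', v') \in \mathscr I(\kappa)$ satisfy the bounded-intersection condition, so every admissible $\sigma$ is a subset of a fixed finite collection of Delaunay data, yielding finitely many $\sigma$ as required. The main obstacle is the surface-topological finiteness invoked in the third paragraph, but this is classical: it reduces to counting strand-patterns in each of the finitely many triangles of $\Delta$ subject to a bounded number of boundary crossings, which is a standard exercise.
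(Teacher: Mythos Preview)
Your proposal is correct and follows essentially the same approach as the paper's proof. The paper's argument is a one-liner asserting that there are only finitely many triangulations $\Delta'$ with $\max\{\iota(\gamma,\gamma') : \gamma \in E(\Delta),\ \gamma' \in E(\Delta')\} < K$ and finitely many coefficient vectors $v'$ for each; you supply the same reduction and additionally spell out the standard normal-coordinate justification for the finiteness of such $\Delta'$, which the paper leaves implicit.
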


\begin{proof}
This is a consequence of the fact that for any triangulation $\Delta$ of $(S_g, \Sigma)$, there are only finitely many $v \in \{-1,1\}^{2|E(\Delta)|}$, as well as only finitely many other triangulations $\Delta'$ such that $\max \{\iota(\gamma, \gamma') \st  \gamma \in E(\Delta), \gamma' \in E(\Delta')\} < K$.
\end{proof}

\begin{proof}[Proof of Theorem \ref{classification result}]
Lemma \ref{intersection bound} gives us that all but finitely many $\sigma \subset \mathscr I(\kappa)$ with $(\Delta, v) \in \sigma$ have $\iota(\sigma) \geq 3$. Lemmas \ref{cylinder without hypothesis} and \ref{controlling degenerate edges} give us that for every $\sigma$ with $\iota(\sigma) \geq 3$, there is a horizontal or vertical cylinder in every $M \in \mathcal D_\sigma$, and Lemma \ref{modulus bound} gives us that such a cylinder must have modulus greater than $1$. Therefore we are done.
\end{proof}

\section{The isodelaunay complex}\label{complex section}

In this section, we introduce the isodelaunay complex $\mathcal I(\kappa)$ and prove Theorem \ref{intro main}.

\begin{defn}
We denote by $\mathcal T(\kappa) \subset \mathcal H_{\mathrm{marked}}(\kappa)$ the \emph{triple intersection locus}
\[
\mathcal T(\kappa) \coloneqq \bigcup_{\substack{\sigma \subset \mathscr I(\kappa) \\ \iota(\sigma) \geq 3}} \mathcal D_\sigma.
\]
\end{defn}

Theorem \ref{homotopy theorem} tells us that $\mathcal T(\kappa)$ may be deleted without changing the homotopy type of $\mathcal H_{\mathrm{marked}}(\kappa)$.

\begin{thm}\label{homotopy theorem}
We have an $\mathrm{MCG}(S_g, \Sigma)$-homotopy equivalence
\[
\mathcal H_{\mathrm{marked}}(\kappa) \simeq_{\mathrm{MCG}(S_g, \Sigma)} \mathcal H_{\mathrm{marked}}(\kappa) \setminus \mathcal T(\kappa).
\]
\end{thm}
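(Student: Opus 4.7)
The plan is to construct an $\mathrm{MCG}(S_g,\Sigma)$-equivariant strong deformation retraction of $\mathcal H_{\mathrm{marked}}(\kappa)$ onto the open subset $\mathcal H_{\mathrm{marked}}(\kappa) \setminus \mathcal T(\kappa)$, leveraging the cylinder structure supplied by Theorem \ref{classification result}. Set $G \coloneqq \mathrm{MCG}(S_g,\Sigma)$ throughout. First I would verify that $\mathcal T(\kappa)$ is a closed, $G$-invariant subset: $G$-invariance is immediate because $\iota(\sigma)$ depends only on the $G$-orbit of $\sigma$, and closedness follows from the local finiteness of $\mathscr D(\kappa)$ (Lemma \ref{nerve hypotheses}(\ref{good cover})), since every point has a neighborhood meeting only finitely many members of $\mathscr D(\kappa)$, so locally the union defining $\mathcal T(\kappa)$ reduces to a finite union of closed sets.

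The key geometric step is to show that, in every period coordinate chart, each convex piece $\mathcal D_\sigma$ with $\iota(\sigma) \geq 3$ has real codimension at least $2$ in the ambient affine chart. By Lemmas \ref{cylinder without hypothesis}, \ref{controlling degenerate edges}, and \ref{modulus bound}, every $M \in \mathcal D_\sigma$ carries a horizontal or vertical cylinder $C$ of modulus greater than $1$, and by definition $M$ simultaneously lies in at least two distinct polytopes $\mathcal D(\Delta_1, v_1)$ and $\mathcal D(\Delta_2, v_2)$. The presence of $C$ contributes one real-linear equality in period coordinates (a core curve has purely horizontal or purely vertical period), and I would argue that the coexistence of two Delaunay limit triangulations whose edges intersect at least three times within $C$ forces a second, independent degeneration among the veering or quadrilateral inequalities of Definitions \ref{veering conditions} and \ref{quadrilateral conditions}. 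This is the main obstacle: the veering and quadrilateral inequalities are intertwined, and one must rule out the possibility that the equality witnessing the adjacency between $\mathcal D(\Delta_1, v_1)$ and $\mathcal D(\Delta_2, v_2)$ is exactly the horizontality of $C$, so that the two conditions are genuinely transverse.

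Granted the codimension-at-least-two claim, the deformation retraction can be built locally via a radial push away from the codim-$\geq 2$ convex pieces in each period coordinate chart. Because $G$ acts on $\mathcal H_{\mathrm{marked}}(\kappa)$ by integral affine automorphisms (Remark \ref{induced linear structure}) and every stabilizer $G_\sigma$ is finite (Lemma \ref{nerve hypotheses}(\ref{finite isotropy})), the local radial push can be averaged over $G_\sigma$ to render it $G_\sigma$-equivariant, and supported inside an invariant tubular neighborhood of $\mathcal T(\kappa)$ constructed from the locally finite polytopal data of $\mathscr D(\kappa)$.

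These local equivariant deformations can then be assembled into a global $G$-equivariant strong deformation retraction using a $G$-equivariant partition of unity subordinate to a $G$-invariant open cover by period coordinate charts, which exists because $G$ acts properly discontinuously on the paracompact Hausdorff space $\mathcal H_{\mathrm{marked}}(\kappa)$. The resulting retraction fixes $\mathcal H_{\mathrm{marked}}(\kappa) \setminus \mathcal T(\kappa)$ pointwise and carries all of $\mathcal H_{\mathrm{marked}}(\kappa)$ into it, yielding the claimed $G$-equivariant homotopy equivalence.
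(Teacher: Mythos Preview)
Your approach has a genuine gap at the central step: the codimension-at-least-two claim for $\mathcal T(\kappa)$ is false. You correctly flag as ``the main obstacle'' the possibility that the equality witnessing the adjacency between $\mathcal D(\Delta_1,v_1)$ and $\mathcal D(\Delta_2,v_2)$ is exactly the horizontality of $C$, but this is precisely what happens. If $M$ has a horizontal cylinder $C$ of large modulus, then by Lemma~\ref{edge computation} the two Delaunay limit triangulations $\Delta_M^+$ and $\Delta_M^-$ exist and differ only inside $C$; the boundary of $\mathcal D(\Delta_M^+,v_M^+)$ on which $M$ sits is cut out by the veering equalities $\Im(z_\gamma)=0$ for the horizontal edges $\gamma$ along $\partial C$, and these equalities are \emph{implied by} the single condition that the core curve of $C$ has purely real period. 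There is no second, transverse degeneration. Concretely, the locus of surfaces with a horizontal cylinder of a fixed isotopy type and modulus at least (say) $3$ is open in a real-codimension-one subspace, and every such surface lies in $\mathcal T(\kappa)$ by Lemmas~\ref{edge computation} and \ref{tilted triangulations are optimal}. So $\mathcal T(\kappa)$ has codimension one, and a radial push cannot be defined.

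The paper's argument is structurally different and avoids this issue entirely. It does not retract $\mathcal H_{\mathrm{marked}}(\kappa)$ onto $\mathcal H_{\mathrm{marked}}(\kappa)\setminus\mathcal T(\kappa)$; instead it produces a single explicit, globally defined, $\mathrm{MCG}(S_g,\Sigma)$-equivariant homotopy $H$---the \emph{modulus-shrinking homotopy}, which applies the cylinder stretch $a_t^C$ to every cylinder of modulus greater than $1$ so as to shrink its modulus to $1$---and shows that $H$ retracts \emph{both} $\mathcal H_{\mathrm{marked}}(\kappa)$ and $\mathcal H_{\mathrm{marked}}(\kappa)\setminus\mathcal T(\kappa)$ onto the common subspace $\mathcal H_{\mathrm{marked}}^{\mathrm{mod}\le 1}(\kappa)$. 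The homotopy moves \emph{along} the codimension-one locus of surfaces with a horizontal (or vertical) cylinder rather than transverse to it. The substantive content is Lemma~\ref{non-decreasing intersection numbers}, which says $t\mapsto\iota(a_t^C(M))$ is non-decreasing; this ensures (Lemma~\ref{homotopy restriction}) that $H$ never pushes a point from outside $\mathcal T(\kappa)$ into it, so $H$ restricts to the complement. No partitions of unity, tubular neighborhoods, or local patching are needed.
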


\begin{defn}
Let $\mathcal H_{\mathrm{marked}}^{\mathrm{mod}\leq1}(\kappa)$ denote the subset of $\mathcal H_{\mathrm{marked}}(\kappa)$ such that for every $M \in \mathcal H_{\mathrm{marked}}^{\mathrm{mod}\leq1}(\kappa)$, every cylinder in $M$ has modulus at most $1$.
\end{defn}

\begin{rem}
Recall from the proof of Theorem \ref{classification result} that Lemmas \ref{cylinder without hypothesis}, \ref{controlling degenerate edges}, and \ref{modulus bound} together show that $\mathcal H_{\mathrm{marked}}^{\mathrm{mod}\leq1}(\kappa) \subset \mathcal H_{\mathrm{marked}}(\kappa) \setminus \mathcal T(\kappa)$.
\end{rem}

We will prove Theorem \ref{homotopy theorem} by showing that $\mathcal H_{\mathrm{marked}}(\kappa)$ and $\mathcal H_{\mathrm{marked}}(\kappa) \setminus \mathcal T(\kappa)$ both have $\mathcal H_{\mathrm{marked}}^{\mathrm{mod}\leq1}(\kappa)$ as an $\mathrm{MCG}(S_g, \Sigma)$-deformation retract.

\begin{defn}[Modulus-shrinking homotopy]
For a translation surface $M \in \mathcal H_{\mathrm{marked}}(\kappa)$, let $C_1,\dots,C_k$ be all the cylinders in $M$ whose moduli $m_i$ are greater than $1$. For $0 \leq t \leq 1$, we define
\[
H(M,t) \coloneqq a^{C_1}_{(1-t) + t/m_1} \circ \cdots \circ a^{C_k}_{(1-t) + t/m_k}(M).
\]
Note that this is well-defined and independent of the indexing of the $C_i$, because in any translation surface, cylinders of modulus greater than $1$ are disjoint from each other. Also note that $H$ is an $\mathrm{MCG}(S_g, \Sigma)$-homotopy.
\end{defn}

\begin{lem}\label{non-decreasing intersection numbers}
Let $C$ be a horizontal or vertical cylinder on a translation surface $M$. Then $t \mapsto \iota(a^C_t(M))$ is non-decreasing for $t > 1/\mathrm{mod}(C)$.
\end{lem}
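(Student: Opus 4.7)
My plan is to explicitly track how the maximal intersection contribution from inside the stretched cylinder evolves with $t$, while arguing that contributions from outside $C$ are $t$-invariant. By a $90^\circ$ rotation I may assume $C$ is horizontal with $w := \width(C)$ and $h := \hei(C)$; for $t > 1/\mathrm{mod}(C) = w/h$, the stretched cylinder $a^C_t C$ has height $h_t := th > w$ and modulus exceeding $1$, so Lemma \ref{tilted triangulations are optimal} applies to identify a maximizing pair $\gamma^+_t \in E(\Delta^+_{a^C_t(M)})$ and $\gamma^-_t \in E(\Delta^-_{a^C_t(M)})$ of opposite-slope diagonals of the form described in Lemma \ref{edge computation}(\ref{diagonal saddle}). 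Writing $p^+_t$ and $p^-_t$ for the horizontal positions in the universal cover of the top endpoints of $\gamma^+_t$ and $\gamma^-_t$ respectively, as in the proof of Lemma \ref{adjacency computation}, their slopes satisfy $s^+_t = h_t/p^+_t$ and $|s^-_t| = h_t/(h_t - p^-_t)$.

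Substituting these into Lemma \ref{displacement formula} and simplifying algebraically, I would show that the number of consecutive intersection points of a lift of $\gamma^+_t$ with $\gamma^-_t$ equals essentially $\lfloor (h_t + p^+_t - p^-_t)/w \rfloor$. As $t$ grows, $h_t$ grows linearly, $p^+_t$ is non-decreasing (jumping upward each time $h_t$ crosses a value where a new top singularity enters the relevant range, corresponding to a Delaunay flip in the cylinder), and $p^-_t$ is non-increasing (eventually stabilizing at the smallest positive horizontal coordinate of a top singularity in the universal cover). Hence the intersection count from this pair is non-decreasing in $t$. For edges of Delaunay limit triangulations lying entirely outside $C$, the stretch $a^C_t$ acts trivially on the complement of $C$, so the underlying topological arcs and their pairwise intersection numbers are $t$-invariant; hence these contributions are constant in $t$. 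Taking the maximum over contributions from inside and outside $C$ yields that $\iota(a^C_t(M))$ is non-decreasing.

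The hardest part will be the careful algebraic reduction of Lemma \ref{displacement formula} to the clean expression above, and in particular handling the rounding behavior when an intersection point falls very close to an endpoint of $\gamma^+_t$, so that a discrete jump in $p^+_t$ or $p^-_t$ corresponds precisely to an integer change in the intersection count rather than a non-monotone fluctuation. One also needs to confirm that no other pair of edges from possibly distinct Delaunay limit triangulations exceeds this count: for intersections contained in $a^C_t C$ this is exactly Lemma \ref{tilted triangulations are optimal}, while for intersections involving edges crossing the boundary of $C$, one must check that such edges' slopes are uniformly bounded away from $0$ and $\pm\infty$, so their intersection numbers with cylinder diagonals can be absorbed into the cylinder-interior analysis without breaking monotonicity.
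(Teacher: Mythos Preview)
Your overall strategy matches the paper's: reduce to horizontal $C$, observe that the complement of $C$ is untouched by $a^C_t$, invoke Lemma~\ref{tilted triangulations are optimal} to reduce $\iota(a^C_t(M);C)$ to opposite-slope diagonals of the type in Lemma~\ref{edge computation}(\ref{diagonal saddle}), and use Lemma~\ref{displacement formula} to control the count. The execution, however, differs in a way that leaves a gap.

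You track \emph{the} maximizing pair $(\gamma^+_t,\gamma^-_t)$ and argue that its intersection number is non-decreasing via an explicit floor formula. The problem is that the identity of this pair---in particular the base singularities $x,x'$ from which the diagonals emanate---can jump as $t$ varies, and your monotonicity claims for $p^+_t$ and $p^-_t$ do not obviously survive such a jump. You also acknowledge that the rounding behavior near endpoints needs care, but give no mechanism for it. The paper sidesteps all of this by showing monotonicity for \emph{every} pair of the relevant form simultaneously: it parametrizes $\Tilde C$ non-conformally by the fixed strip $\{0<\Im z<h\}$ with $\Tilde\omega_t|_{\Tilde C}=dx+it\,dy$, so that singularity positions are $t$-independent and maximal squares are rectangles of width $th$. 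In this picture it is immediate that $|\slope(\gamma_{x,S_t})|$ can only decrease as $t$ grows (the rectangle only widens and eventually meets a farther singularity). Lemma~\ref{displacement formula} then gives that for every fixed $x,x'$, the vertical spacing of consecutive points in $\gamma_{x,S_t}\cap\gamma_{x',S'_t}$ is non-increasing, and the lowest such point descends; hence each pairwise count is non-decreasing, and so is their maximum. No explicit formula, no rounding analysis, and no tracking of which pair is currently maximizing is needed.

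Your final worry about edges crossing $\partial C$ is largely a non-issue and the paper does not engage with it: Delaunay limit edges lying in $C$ have interiors in $C$, those lying outside have interiors outside, so mixed pairs contribute zero to $\iota$; the outside-outside contribution is $t$-invariant since the complement of $C$ is unchanged.
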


\begin{proof}
Assume without loss of generality that $C$ is horizontal. We will show that $t \mapsto \iota(a_t^C(M); C)$ is non-decreasing for $t > 1/\mathrm{mod}(C)$. Since the part of any triangulation lying in the complement of $C$ in $M$ never changes as $t$ increases, the desired result will then follow.

Let us write $h = \hei(C)$ and $a^C_t(M) = M_t = (X_t,\omega_t)$, and let us consider the universal cover $\Tilde M_t = (\Tilde X_t, \Tilde \omega_t) \twoheadrightarrow M_t$. For all $t$, let us parametrize the preimage $\Tilde C$ of $C$ in $\Tilde M_t$ with the strip $\{0 < \Im(z) < h\}$, such that $\Tilde \omega_t|_{\Tilde C}$ is expressed as $dx + itdy$ on this strip. In these coordinates, maximal $L^\infty$-squares are given by rectangles of width $th$ and height $1$.

Let $t > 1/\mathrm{mod}(C)$, so that $C$ has modulus greater than $1$ on $M_t$. By Lemma \ref{tilted triangulations are optimal}, we have
\[
\iota(M_t; C) = \max \{\iota(\gamma, \gamma') \st \gamma \in E(\Delta_M^+), \gamma' \in E(\Delta_M^-) \text{ lying in }C\}.
\]
Let us consider a maximal $L^\infty$-square $S_t$ in $\Tilde C$, one of whose vertices is a singularity $x$ of $\Tilde M_t$, and consider the saddle connection $\gamma_{x,S_t}$ that minimizes $|\mathrm{slope}(\gamma_{x,S_t})|$ among saddle connections inscribed in $S_t$ and starting at $x$. We are considering here the slope in our non-conformal, $t$-dependent parametrization, but we will make a conclusion about the amount of times that saddle connections intersect, which is independent of parametrization.

We claim that as $t$ increases, $|\mathrm{slope}(\gamma_{x,S_t})|$ cannot increase. Indeed, one endpoint of the saddle connection $\gamma_{x,S_t}$ is always $x$, and the other endpoint is the singularity furthest from $x$ that meets $S_t$ on the opposite side. In our parametrization, all the singularities have coordinates independent of $t$, and the rectangle $S_t$ simply becomes wider. Hence we see that the only time $|\mathrm{slope}(\gamma_{x,S_t})|$ will change is when $S_t$ meets a new singularity, in which case $|\mathrm{slope}(\gamma_{x,S_t})|$ can only decrease.

By Lemma \ref{tilted triangulations are optimal}, every pair of edges realizing the maximum $\iota(M; C)$ are of the form $\gamma_{x,S_t}$, $\gamma_{x',S'_t}$ with slopes of opposite sign. Thus as $t$ increases, the minimal $y$-coordinate among points of $\gamma_{x,S_t} \cap \gamma_{x',S'_t}$ is non-increasing. By Lemma \ref{displacement formula}, the vertical distance between a consecutive pair of points in $\gamma_{x,S_t}$ and $\gamma_{x',S'_t}$ is also non-increasing as $t$ increases. Therefore $t \mapsto \iota(\gamma_{x,S_t}, \gamma_{x',S'_t})$ is non-decreasing. Even though our parametrization is non-conformal and $t$-dependent, this last fact is independent of parametrization, and so we conclude that $\iota(M_t; C)$ is non-decreasing, as desired.
\end{proof}

\begin{lem}\label{homotopy restriction}
For every $0 \leq t \leq 1$ and $M \in \mathcal H_{\mathrm{marked}}(\kappa)$, we have $H(M, t) \in \mathcal T(\kappa)$ only if $M \in \mathcal T(\kappa)$.
\end{lem}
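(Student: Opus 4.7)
I plan to prove the contrapositive: assume $H(M,t)\in\mathcal T(\kappa)$ and derive $M\in\mathcal T(\kappa)$. Since $\mathcal T(\kappa)=\bigcup_{\iota(\sigma)\geq 3}\mathcal D_\sigma$, and taking $\sigma$ to be the full set of Delaunay limit triangulations of a surface $N$ gives $\iota(\sigma)=\iota(N)$, membership in $\mathcal T(\kappa)$ is equivalent to $\iota(\cdot)\geq 3$. So I assume $\iota(H(M,t))\geq 3$ and aim to show $\iota(M)\geq 3$. Applying Lemmas \ref{cylinder without hypothesis}, \ref{controlling degenerate edges}, and \ref{modulus bound} to the edge pair realizing this intersection bound, I may fix a horizontal or vertical cylinder $C$ in $H(M,t)$ of modulus strictly greater than $1$ containing Delaunay-limit-triangulation edges of intersection number at least $3$.

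The crux of the argument is to show that the subset of the underlying surface occupied by $C$ is disjoint from every stretched cylinder $C_i$ appearing in the definition of $H$. Granted this, $H(\cdot,t)$ acts as the identity on $C$ together with a neighborhood of its topological boundary, so the same subset is a flat horizontal or vertical cylinder of the same modulus on $M$, with an identical configuration of singularities on its boundary. By Lemma \ref{tilted triangulations are optimal} together with part (\ref{diagonal saddle}) of Lemma \ref{edge computation}, the pair of Delaunay edges in $C$ realizing $\iota(\cdot;C)$ is determined entirely by this internal geometry of $C$, so $\iota(M;C)=\iota(H(M,t);C)\geq 3$, which forces $\iota(M)\geq 3$ and hence $M\in\mathcal T(\kappa)$.

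The disjointness claim is immediate for $t<1$: on $H(M,t)$ each cylinder $C_i$ has modulus $(1-t)m_i+t>1$, so the fact that distinct cylinders of modulus greater than $1$ are disjoint (invoked in the definition of $H$) forces $C\cap C_i=\emptyset$ on $H(M,t)$; as subsets of the underlying surface do not depend on $t$, the same equality holds for $M$. For $t=1$ I intend to reduce to the $t<1$ case by continuity: were $C\cap C_i\neq\emptyset$ as subsets for some $i$, the overlap would persist on $H(M,s)$ for every $s$, so I aim to show that the subset $C$ continues to carry a flat horizontal or vertical cylinder structure of modulus greater than $1$ on $H(M,s)$ for $s$ slightly less than $1$, contradicting the $t<1$ disjointness at such $s$.

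The main obstacle I anticipate is executing this limit step rigorously when the overlapping $C_i$ is oblique (neither horizontal nor vertical), because the piecewise-affine nature of $H$ can destroy the flat horizontal or vertical structure of $C$ for $s<1$. When $C_i$ is horizontal or vertical, the cylinder stretch $a^{C_i}_s$ preserves the coordinate axes; so $C$ remains a flat cylinder at all parameters $s$ and the modulus of $C$ on $H(M,s)$ is continuous in $s$, producing the desired contradiction. In the oblique case I expect to rule out the overlap directly: the Delaunay edges of intersection number at least $3$ in $C$ would have to traverse the oblique $C_i$, and the corresponding direction shift between horizontal or vertical lines on $H(M,1)$ and their preimages on $H(M,s)$ inside $C_i$ is incompatible, for $s$ slightly less than $1$, with the geometry required for these edges to remain Delaunay-limit edges.
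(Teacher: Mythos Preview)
Your disjointness claim fails in a case you have not considered: the cylinder $C$ on $H(M,t)$ witnessing $\iota(H(M,t);C)\geq 3$ may \emph{equal} one of the stretched cylinders $C_i$. This happens precisely when some $C_i$ is itself horizontal or vertical (so $M$ already has a horizontal or vertical cylinder of modulus $m_i>1$). The fact you invoke---that \emph{distinct} cylinders of modulus greater than $1$ are disjoint---says nothing when $C=C_i$, and in this case $H(\cdot,t)$ certainly does not act as the identity on $C$. Your conclusion $\iota(M;C)=\iota(H(M,t);C)$ then breaks down: the geometry inside $C_i$ is genuinely different on $M$ (where it has modulus $m_i$) and on $H(M,t)$ (where it has modulus $(1-t)m_i+t$). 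What you need here is exactly Lemma~\ref{non-decreasing intersection numbers}: shrinking a horizontal or vertical cylinder cannot increase $\iota(\cdot;C_i)$, so $\iota(M;C_i)\geq\iota(H(M,t);C_i)\geq 3$. The paper's proof is organized around this lemma, splitting on whether $M$ has any horizontal or vertical cylinder of modulus $>1$; your argument essentially handles the case where it does not, but misses the case where it does.

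Separately, your $t=1$ analysis is more delicate than needed. If two cylinders of different directions intersect, then a core curve of each must cross the other, which forces the product of their moduli to be at most $1$; hence a cylinder $C$ of modulus $>1$ on $H(M,1)$ cannot meet any $C_i$ of a different direction, since each such $C_i$ has modulus exactly $1$. No continuity argument or analysis of oblique shears on Delaunay edges is required. (A minor point: what you call the ``contrapositive'' is in fact the direct implication stated in the lemma.)
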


\begin{proof}
Suppose $M \notin \mathcal T(\kappa)$, so that $\iota(M) < 3$. If $M$ has no horizontal or vertical cylinders of modulus greater than $1$, then $H(M,t)$ has no such cylinders either, and hence $\iota(H(M,t)) < 3$ for every $0 \leq t \leq 1$ by Lemma \ref{cylinder without hypothesis}. If $M$ does have such cylinders, then Lemma \ref{non-decreasing intersection numbers} again gives $\iota(H(M,t)) < 3$ for every $0 \leq t \leq 1$. In each case, we conclude $H(M,t) \notin \mathcal T(\kappa)$.
\end{proof}

\begin{proof}[Proof of Theorem \ref{homotopy theorem}]
Since $H$ is a deformation retraction of $\mathcal H_{\mathrm{marked}}(\kappa)$ onto $\mathcal H_{\mathrm{marked}}^{\mathrm{mod}\leq1}(\kappa)$, we have $\mathcal H_{\mathrm{marked}}(\kappa) \simeq \mathcal H_{\mathrm{marked}}^{\mathrm{mod}\leq1}(\kappa)$. By Lemma \ref{homotopy restriction}, we also see that $H$ is a deformation retraction of $\mathcal H_{\mathrm{marked}}(\kappa) \setminus \mathcal T(\kappa)$ onto $\mathcal H_{\mathrm{marked}}^{\mathrm{mod}\leq1}(\kappa)$. Therefore we also have $\mathcal H_{\mathrm{marked}}(\kappa) \setminus \mathcal T(\kappa) \simeq \mathcal H_{\mathrm{marked}}^{\mathrm{mod}\leq1}(\kappa)$. Since $H$ is an $\mathrm{MCG}(S_g, \Sigma)$-homotopy, we conclude that
\[
\mathcal H_{\mathrm{marked}}(\kappa) \simeq_{\mathrm{MCG}(S_g,\Sigma)} \mathcal H_{\mathrm{marked}}^{\mathrm{mod}\leq1}(\kappa) \simeq_{\mathrm{MCG}(S_g,\Sigma)} \mathcal H_{\mathrm{marked}}(\kappa) \setminus \mathcal T(\kappa).
\]
\end{proof}

\begin{defn}
Let $\mathscr D(\kappa)_{\mathrm{finite}}$ be the covering of $\mathcal H_{\mathrm{marked}}(\kappa) \setminus \mathcal T(\kappa)$ by the closed subsets $\mathcal F(\Delta, v) \coloneqq \mathcal D({\Delta, v}) \setminus \mathcal T(\kappa)$, where $(\Delta,v) \in \mathscr I(\kappa)$. Let $\mathcal F_\sigma \coloneqq \bigcap_{(\Delta,v)} \mathcal F(\Delta, v)$ for $\sigma \subset \mathscr I(\kappa)$.
\end{defn}

\begin{thm}\label{main theorem}
The complex $\mathcal N(\mathscr D(\kappa)_{\mathrm{finite}})$ is locally finite and $\mathrm{MCG}(S_g,\Sigma)$-invariant, and we have a homotopy equivalence
\[
\mathcal H(\kappa) \simeq \mathcal N(\mathscr D(\kappa)_{\mathrm{finite}})/\mathrm{MCG}(S_g,\Sigma).
\]
\end{thm}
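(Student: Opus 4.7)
The plan is to establish, in turn, local finiteness, $G$-invariance (writing $G \coloneqq \mathrm{MCG}(S_g, \Sigma)$), and the homotopy equivalence. The overall strategy for the last assertion is to apply the equivariant Nerve Lemma (Theorem \ref{equivariant nerve lemma}) to the covering $\mathscr D(\kappa)_{\mathrm{finite}}$ of the space $X \coloneqq \mathcal H_{\mathrm{marked}}(\kappa) \setminus \mathcal T(\kappa)$, yielding a $G$-homotopy equivalence $X \simeq_G \mathcal N(\mathscr D(\kappa)_{\mathrm{finite}})$. Chaining with Theorem \ref{homotopy theorem} and descending to $G$-orbit spaces (which is valid since $G$-equivariant homotopy equivalences of paracompact $G$-spaces descend to homotopy equivalences of quotients) then gives $\mathcal H(\kappa) \simeq \mathcal N(\mathscr D(\kappa)_{\mathrm{finite}})/G$.

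For local finiteness, I would argue that whenever $\sigma \subseteq \mathscr I(\kappa)$ satisfies $\iota(\sigma) \geq 3$, the definition of $\mathcal T(\kappa)$ forces $\mathcal D_\sigma \subseteq \mathcal T(\kappa)$, so $\mathcal F_\sigma = \emptyset$ and $\sigma$ contributes no simplex to $\mathcal N(\mathscr D(\kappa)_{\mathrm{finite}})$. Lemma \ref{intersection bound} applied with $K = 3$ then shows that each vertex $(\Delta, v)$ meets only finitely many $\sigma$ with $\iota(\sigma) \leq 2$. MCG-invariance follows from Lemma \ref{nerve hypotheses}(i) together with the $G$-invariance of $\mathcal T(\kappa)$, since $\iota(\sigma)$ depends only on topological intersection numbers of arcs rel $\Sigma$ and is therefore preserved by re-marking.

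For the homotopy equivalence I must verify the hypotheses of Theorem \ref{equivariant nerve lemma} for $\mathscr D(\kappa)_{\mathrm{finite}}$ on $X$. Paracompactness of $X$ is inherited from the metrizable manifold $\mathcal H_{\mathrm{marked}}(\kappa)$. Local finiteness and $G$-invariance of the cover are already established, finiteness of $G_\sigma$ goes through verbatim as in Lemma \ref{nerve hypotheses}(ii), and the equivalent-refinement hypothesis can be checked by a construction parallel to the one the paper defers in Lemma \ref{nerve hypotheses}(iv), adapted to the complement of $\mathcal T(\kappa)$.

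The main obstacle is $G_\sigma$-contractibility of each nonempty $\mathcal F_\sigma = \mathcal D_\sigma \setminus \mathcal T(\kappa)$, since the straight-line homotopy used in Lemma \ref{nerve hypotheses}(iii) may re-enter $\mathcal T(\kappa)$ en route to $M_{\mathrm{avg}}$. I would resolve this by a refined averaging argument. Because $\mathscr D(\kappa)$ is locally finite, $\mathcal T(\kappa) \cap \mathcal D_\sigma = \bigcup_{\iota(\tau) \geq 3,\, \tau \not\subseteq \sigma} \mathcal D_{\sigma \cup \tau}$ is a locally finite union of proper faces of the convex polytope $\mathcal D_\sigma$ (each $\mathcal D_{\sigma \cup \tau}$ with $\tau \not\subseteq \sigma$ is cut out by additional equalities). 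Consequently the relative interior of $\mathcal D_\sigma$ lies entirely in $\mathcal F_\sigma$. Since the $G_\sigma$-action on $\mathcal D_\sigma$ is affine (Remark \ref{induced linear structure}), it preserves this relative interior, so averaging any point thereof over $G_\sigma$ produces a $G_\sigma$-fixed point $M_{\mathrm{avg}}$ still in the relative interior. By standard convex geometry, the straight-line path from any $M \in \mathcal F_\sigma \subseteq \mathcal D_\sigma$ to $M_{\mathrm{avg}}$ enters the relative interior for all $t > 0$ and therefore stays inside $\mathcal F_\sigma$; affine equivariance makes this deformation retraction $G_\sigma$-equivariant. The step I expect to require the most care is rigorously justifying that $\mathcal T(\kappa) \cap \mathcal D_\sigma$ is confined to proper faces of $\mathcal D_\sigma$; this is a sharpening of Lemma \ref{convex intersections} that requires tracking how the veering and quadrilateral inequalities from $\tau \not\subseteq \sigma$ either collapse $\mathcal D_\sigma$ to lower dimension or remain strict.
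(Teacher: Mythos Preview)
Your approach matches the paper's essentially step for step: verify the hypotheses of Theorem~\ref{equivariant nerve lemma} for $\mathscr D_{\mathrm{finite}}(\kappa)$ (including the relative-interior averaging argument for $G_\sigma$-contractibility of $\mathcal F_\sigma$, which the paper also uses), invoke it together with Theorem~\ref{homotopy theorem}, descend to the quotient, and deduce local finiteness from Lemma~\ref{intersection bound}. The paper simply asserts that $\mathcal F_\sigma$ is star-shaped about $M_{\mathrm{avg}}$ without your attempted justification via proper faces, so the concern you flag is well-placed (note your phrase ``cut out by additional equalities'' should read ``additional nonstrict inequalities,'' and lower-dimensionality is not automatic from that alone) but does not represent a divergence in method.
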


\begin{proof}
We first verify Lemma \ref{nerve hypotheses} with $\mathscr D_{\mathrm{finite}}(\kappa)$ in place of $\mathscr D(\kappa)$. Hypotheses (\ref{good cover}) and (\ref{finite isotropy}) are immediate from that lemma. To see (\ref{equivariant contractibility}) observe that $\mathrm{MCG}(S_g, \Sigma)$ must have a fixed point $M_{\mathrm{avg}}$ in the relative interior\footnote{By \ref{convex intersections}, $\mathcal D_\sigma$ is homeomorphic to a convex subset of $\R^{4g+2n-2}$, whose affine span is some affine $k$-plane $A$. The \emph{relative interior} of $\mathcal D_\sigma$ is its interior as a subset $A$.} of $\mathcal D_\sigma$, since this relative interior is convex. Since $\mathcal F_\sigma = \mathcal D_\sigma \setminus \mathcal T(\kappa)$ is star-shaped with respect to $M_{\mathrm{avg}}$, we again have that the straight-line homotopy $(M,t) \mapsto (1-t)M + tM_{\mathrm{avg}}$ is an $\mathrm{MCG}(S_g, \Sigma)$-deformation retraction of $\mathcal F_\sigma$ onto $M_\mathrm{avg}$. We once again defer the proof of (\ref{equivalent refinement check}) to Lemma \ref{D(k) is a refinement}.

We may now apply Theorem \ref{equivariant nerve lemma} to obtain an $\mathrm{MCG}(S_g,\Sigma)$-homotopy equivalence
\[
\mathcal H_{\mathrm{marked}}(\kappa) \setminus \mathcal T(\kappa) \simeq_{\mathrm{MCG}(S_g,\Sigma)} \mathcal N(\mathscr D(\kappa)_{\mathrm{finite}}).
\]
Together with Theorem \ref{homotopy theorem}, this gives
\[
\mathcal H_{\mathrm{marked}}(\kappa) \simeq_{\mathrm{MCG}(S_g,\Sigma)} \mathcal N(\mathscr D(\kappa)_{\mathrm{finite}}).
\]
By $\mathrm{MCG}(S_g,\Sigma)$-equivariance, this homotopy equivalence descends to the quotient by $\mathrm{MCG}(S_g,\Sigma)$, and so we have
\[
\mathcal H_{\mathrm{marked}}(\kappa)/\mathrm{MCG}(S_g,\Sigma) = \mathcal H(\kappa) \simeq \mathcal N(\mathscr D(\kappa)_{\mathrm{finite}})/\mathrm{MCG}(S_g,\Sigma).
\]
Local finiteness of $\mathcal N(\mathscr D(\kappa)_{\mathrm{finite}})$ follows from Lemma \ref{intersection bound}.
\end{proof}

\begin{rem}\label{orbifold barycentric}
Recall that if a group $G$ acts on a simplicial complex $\mathcal X$ by simplicial automorphisms, the quotient space $\mathcal X/G$ may not inherit a CW structure from $\mathcal X$. Nonetheless, letting $\mathcal X'$ denote the barycentric subdivision of $\mathcal X$, the quotient $\mathcal X'/G$ is a CW complex. Furthermore, by taking another barycentric subdivision, we are guaranteed that $\mathcal X''/G$ is a simplicial complex. See e.g. Proposition III.1.1 of \cite{bredon}.
\end{rem}

\begin{defn}
Let us write $\mathcal I(\kappa) \coloneqq \mathcal N(\mathscr D(\kappa)_{\mathrm{finite}})''/\mathrm{MCG}(S_g,\Sigma)$, and let us call this simplicial complex the \emph{isodelaunay complex}.
\end{defn}

\begin{proof}[Proof of Theorem \ref{intro main}]
The first claim is the content of Theorem \ref{main theorem}. We now show that $\mathcal I(\kappa)$ is finite. Since $\mathcal N(\mathscr D_{\mathrm{finite}}(\kappa))$ is locally finite, it suffices to show that $\mathcal N(\mathscr D_{\mathrm{finite}}(\kappa))$ has finitely many $\mathrm{MCG}(S_g, \Sigma)$-orbits of vertices, i.e. there are finitely many isodelaunay regions up to the action of the mapping class group. Observe that these orbits $\mathrm{MCG}(S_g, \Sigma)\mathcal D(\Delta, v)$ are in one-to-one correspondence with equivalence classes of Delaunay data $(\Delta, v)$ up to homeomorphism. Since there are, up to homeomorphism, finitely many triangulations $\Delta$ of $S_g$ with vertices in $\Sigma$, and finitely many $v \in \{-1, 1\}^{2|E(\Delta)|}$, we are done.
\end{proof}

\setcounter{subsection}{1}
\subsection{Computability of the isodelaunay complex}

In this subsection we briefly outline a method for constructing $\mathcal I(\kappa)$ explicitly, and the computational problems involved in doing so.

\subsubsection*{Enumerating Delaunay data up to homeomorphism}
Recall that vertices of the quotient $\mathcal N(\mathscr D_{\mathrm{finite}}(\kappa))/\mathrm{MCG}(S_g,\Sigma)$ can be identified with equivalence classes of Delaunay data $(\Delta, v)$ up to homeomorphism, and that there are finitely many of these equivalence classes. They can be picked out from the larger collection of homeomorphism types of veering triangulations using the software package \texttt{veerer} \cite{veerer}. This software package is designed to handle veering triangulations of translation surfaces, and implements an algorithm that detects whether a given homeomorphism type of veering triangulation arises as an $L^\infty$-Delaunay triangulation. Hence we may identify the vertices of $\mathcal N(\mathscr D_{\mathrm{finite}}(\kappa))/\mathrm{MCG}(S_g,\Sigma)$ from among the set of all homeomorphism types of veering triangulations.

\subsubsection*{Finding the simplices incident to a vertex of $\mathcal N(\mathscr D_{\mathrm{finite}}(\kappa))$}
The edges of $\mathcal N(\mathscr D_{\mathrm{finite}}(\kappa))$ incident to a given vertex $w = \mathcal D(\Delta, v)$ are given by pairs $\{(\Delta, v), (\Delta', v')\}$ of Delaunay data such that $\iota(\Delta, \Delta') \coloneqq \max \{\iota(\gamma, \gamma') \st  \gamma \in E(\Delta), \gamma' \in E(\Delta')\} < 3$ and $\mathcal D(\Delta, v) \cap \mathcal D(\Delta', v') \ne \emptyset$. We describe how to find all edges incident to $w$.

The first problem is to find all triangulations $\Delta'$ such that $\iota(\Delta, \Delta') < 3$, and to find all coefficient vectors $v'$ compatible with these triangulations. Let $\sigma(\Delta, v) \subset \mathscr I(\kappa)$ be the set of all such $(\Delta', v')$. Finding $\sigma(\Delta, v)$ is a computational problem in surface topology. The second problem is, for each $(\Delta', v') \in \sigma(\Delta, v)$, to compute $\mathcal D(\Delta, v) \cap \mathcal D(\Delta', v')$. This is a problem in linear programming: once a system of coordinates is fixed, this is the problem of deciding whether the nonstrict veering and quadrilateral inequalities for $(\Delta, v)$ and $(\Delta', v')$ have a simultaneous solution.

The $n$-simplices of $\mathcal N(\mathscr D_{\mathrm{finite}}(\kappa))$ incident to $w$ are given by mapping class group orbits of sets $\{(\Delta, v) = (\Delta_1,v_1), \dots, (\Delta_n, v_n)\}$ of Delaunay data that satisfy $\iota(\Delta_i, \Delta_j) < 3$ pairwise, such that $\bigcap_{i=1}^n\mathcal D(\Delta_i, v_i) \ne \emptyset$. To find these $n$-simplices, the only additional problem is to decide whether the nonstrict veering and quadrilateral inequalities for each size $n$ subset of $\sigma(\Delta, v)$ have a simultaneous solution.

\subsubsection*{Passing to the quotient}
The previous two steps find every vertex $\mathrm{MCG}(S_g, \Sigma)\mathcal D(\Delta, v)$ of $\mathcal N(\mathscr D_{\mathrm{finite}}(\kappa))/\mathrm{MCG}(S_g, \Sigma)$ and all the simplices incident to some preimage $w = \mathcal D(\Delta, v)$ of this vertex in $\mathcal N(\mathscr D_{\mathrm{finite}}(\kappa))$. It remains to understand how these simplices are identified with each other under the quotient mapping. That is to say, given $\sigma^j = \{(\Delta_1^j,v_1^j), \dots, (\Delta_n^j, v_n^j)\}$ for $j = 1,2$, we want to know whether there exists some $g \in \mathrm{MCG}(S_g, \Sigma)$ such that $g\mathcal D_{\sigma^1} = \mathcal D_{\sigma^2}$. This is again a computational problem in surface topology: we must determine whether there is a self-homeomorphism (i.e. re-marking) of the surface $(S_g, \Sigma)$ that takes the configuration $\sigma^1$ of $n$ simultaneous veering triangulations to the configuration $\sigma^2$.

Among the possibilities to be aware of here is that of self-adjacency. That is to say, we may have $g(\mathcal D(\Delta_1, v_1) \cap \mathcal D(\Delta_2, v_2)) = \mathcal D(\Delta_1, v_1) \cap \mathcal D(\Delta_3, v_3)$, such that all three $\mathcal D(\Delta_j, v_j)$ lie in the same $\mathrm{MCG}(S_g,\Sigma)$-orbit. It turns out that this occurs even in the case of $\mathcal H_{\mathrm{marked}}(0)$. This gives rise to a self-loop at the vertex $\mathrm{MCG}(S_g, \Sigma)\mathcal D(\Delta_1, v_1)$ of $\mathcal N(\mathscr D_{\mathrm{finite}}(\kappa))/\mathrm{MCG}(S_g,\Sigma)$. A further possibility is that of automorphisms of $n$-cells, i.e. the case where $\sigma^1 = \sigma^2$. It is for reasons such as these that we must take the second barycentric subdivision $\mathcal N(\mathscr D_{\mathrm{finite}}(\kappa))''/\mathrm{MCG}(S_g,\Sigma) = \mathcal I(\kappa)$ in order to ensure that this quotient has the structure of a simplicial complex.

\appendix
\section{An equivariant Nerve Lemma}

Except for Lemma \ref{D(k) is a refinement}, this appendix is independent of the rest of the paper. Its purpose is to recall some topological definitions and to prove Theorem \ref{equivariant nerve lemma}. Our proof is a generalization of the methods of Section 5 of \cite{gonzalez-gonzalez}.

In Lemma \ref{D(k) is a refinement}, we produce combinatorial ``regular neighborhoods'' by using the fact that the sets $\overline{\mathcal P(\Delta, v)^\circ}$ are polytopes.

\begin{lem}\label{D(k) is a refinement}
The closed coverings $\mathscr D(\kappa)$ and $\mathscr D(\kappa)_{\mathrm{finite}}$ of $\mathcal H_{\text{marked}}(\kappa)$ and $\mathcal H_{\text{marked}}(\kappa) \setminus \mathcal T(\kappa)$, respectively, are equivalent refinements of $\mathrm{MCG}(S_g, \Sigma)$-invariant open coverings.
\end{lem}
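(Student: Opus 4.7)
The plan is to produce open neighborhoods by a controlled thickening of the polytopes $\mathcal P(\Delta, v)$ in period coordinates, then to verify the equivalent refinement conditions using the convexity established in Lemma \ref{convex intersections} and the affine $\mathrm{MCG}(S_g, \Sigma)$-action from Remark \ref{induced linear structure}.

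First I would fix an extension of the period coordinate map $\Phi$ (a local homeomorphism on $\mathcal H_{\mathrm{marked}}(\kappa)$, injective on each closed polytopal region $\mathcal D(\Delta, v)$ by Lemma \ref{full closure region}) to an open neighborhood of each $\mathcal D(\Delta, v)$. On this neighborhood, and for $\epsilon > 0$, I would define $\mathcal P^{(\epsilon)}(\Delta, v)$ by relaxing each veering and quadrilateral strict inequality by $\epsilon$ and then deleting $\mathcal Z_0(\Delta)$, and pull back via $\Phi$ to obtain an open neighborhood $U^{(\epsilon)}(\Delta, v) \supset \mathcal D(\Delta, v)$.

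The parameter $\epsilon$ must be chosen as a continuous, $\mathrm{MCG}(S_g, \Sigma)$-invariant, positive function on $\mathcal H_{\mathrm{marked}}(\kappa)$ small enough that, for every $\sigma \subset \mathscr I(\kappa)$, two conditions hold: (a) $U^{(\epsilon)}_\sigma \ne \emptyset$ if and only if $\mathcal D_\sigma \ne \emptyset$, so that the nerves are isomorphic; and (b) $\mathcal D_\sigma$ is a $G_\sigma$-deformation retract of $U^{(\epsilon)}_\sigma$. Such $\epsilon$ exists by local finiteness of $\mathscr D(\kappa)$ and paracompactness of $\mathcal H_{\mathrm{marked}}(\kappa)$ via a partition-of-unity construction, and $\mathrm{MCG}(S_g, \Sigma)$-invariance is obtained by averaging over the locally finite group action. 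For (b), $U^{(\epsilon)}_\sigma$ is convex in period coordinates by the argument of Lemma \ref{convex intersections} applied to the weakened inequalities, while the finite stabilizer $G_\sigma$ (finite by (\ref{finite isotropy}) of Lemma \ref{nerve hypotheses}) acts affinely, so averaging a Euclidean inner product over $G_\sigma$ yields a $G_\sigma$-invariant inner product. The nearest-point projection $r: U^{(\epsilon)}_\sigma \to \mathcal D_\sigma$ with respect to this inner product is then $G_\sigma$-equivariant, and the straight-line homotopy $H_t(M) = (1-t)M + t\,r(M)$ stays in the convex set $U^{(\epsilon)}_\sigma$, fixes $\mathcal D_\sigma$ pointwise, and terminates in $\mathcal D_\sigma$, giving the required equivariant deformation retraction.

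The principal obstacle is condition (a): ensuring that a small thickening does not create spurious intersections. By local finiteness this reduces, at each point, to a finite combinatorial check, where satisfiability of a finite collection of strict linear inequalities is preserved under small enough perturbations of those inequalities; this is what ultimately controls how small $\epsilon$ must be. For $\mathscr D_{\mathrm{finite}}(\kappa)$, I would set $V^{(\epsilon)}(\Delta, v) := U^{(\epsilon)}(\Delta, v) \setminus \mathcal T(\kappa)$; since $\mathcal T(\kappa)$ is a locally finite union of closed $\mathcal D_\tau$ and hence closed, this yields an open covering of $\mathcal H_{\mathrm{marked}}(\kappa) \setminus \mathcal T(\kappa)$, and each $\mathcal F_\sigma = \mathcal D_\sigma \setminus \mathcal T(\kappa)$ is $\mathcal D_\sigma$ minus a union of proper faces (the $\mathcal D_\tau$ with $\tau \supsetneq \sigma$ and $\iota(\tau) \geq 3$), so it remains convex and the same equivariant deformation retract argument applies verbatim.
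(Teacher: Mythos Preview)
Your approach is genuinely different from the paper's, and it has a real gap. The paper does not thicken the polytopes analytically; instead it builds an $\mathrm{MCG}(S_g,\Sigma)$-invariant simplicial triangulation of $\mathcal H_{\mathrm{marked}}(\kappa)$ in which every $\mathcal D_\sigma$ is a subcomplex (by exhausting each $\mathcal P(\Delta,v)$ by compact polytopes $\mathcal C_n(\Delta,v)$ and triangulating compatibly), takes a barycentric subdivision, and lets $\mathcal U_\sigma$ be the complement of the subcomplex of simplices disjoint from $\mathcal D_\sigma$. The equivariant deformation retraction is then the standard barycentric one, citing \cite{gonzalez-gonzalez}. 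This is robust precisely because it never relies on convexity of the thickened sets.

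Your convexity claim for $U^{(\epsilon)}_\sigma$ fails. In the unrelaxed polytope $\overline{\mathcal P^\circ(\Delta,v)}$, the locus $\mathcal Z_0(\Delta)=\bigcup_\gamma\{z_\gamma=0\}$ lies on the boundary: each $\{z_\gamma=0\}$ is the intersection of the two veering hyperplanes $\{\Re z_\gamma=0\}$ and $\{\Im z_\gamma=0\}$, and this is exactly what makes Lemma~\ref{convex intersections} work. Once you relax the veering inequalities to $v_{\Re\gamma}\Re z_\gamma>-\epsilon$ and $v_{\Im\gamma}\Im z_\gamma>-\epsilon$, the set $\{z_\gamma=0\}$ is in the \emph{interior} of the relaxed region, and deleting it produces a non-convex set; the argument of Lemma~\ref{convex intersections} no longer applies. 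Making $\epsilon$ a nonconstant function (which you need, since disjoint $\mathcal D(\Delta,v)$ can become arbitrarily close as periods tend to $0$ or $\infty$) only makes this worse: the relaxed constraints are no longer linear. A second, related problem is that the nearest-point projection $r:U^{(\epsilon)}_\sigma\to\mathcal D_\sigma$ is not well-defined, because $\Phi(\mathcal D_\sigma)$ is not closed in $\C^{2g+n-1}$---its closure contains points of $\mathcal Z_0(\Delta)$, and a point of $U^{(\epsilon)}_\sigma$ near that locus can have its nearest point in $\overline{\Phi(\mathcal D_\sigma)}$ land exactly there. You would need to replace the nearest-point retraction by something that respects the missing boundary strata, at which point you are essentially rebuilding the simplicial regular-neighborhood argument the paper uses.
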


\begin{proof}
We will first endow $\mathcal H_{\text{marked}}(\kappa)$ with the structure of an  infinite $\mathrm{MCG}(S_g, \Sigma)$-invariant simplicial complex such that every $\mathcal D_\sigma$ is a subcomplex. Recall the notation of Lemma \ref{convex intersections}. For each integer $n > 1$, define
\[
\mathcal C_n(\Delta, v) \coloneqq \overline{\mathcal P(\Delta, v)^\circ} \cap \bigcap_{\gamma \in E(\Delta)} \{\mathbf z \in \C^{2g+n-1} \st \tfrac1n \leq |\Re(z_\gamma)| + |\Im(z_\gamma)| \leq n\}.
\]
We claim that $\mathcal C_n(\Delta, v)$ is a compact convex subset of $\mathcal P(\Delta, v)$. It is clear from the defining inequalities that $\mathcal C_n(\Delta, v)$ is compact. Now observe that the veering inequalities imply that $|\Re(z_\gamma)|$ is either always $\Re(z_\gamma)$ or always $-\Re(z_\gamma)$ for every $\mathbf z \in \overline{\mathcal P(\Delta, v)^\circ}$, and similarly for $|\Im(z_\gamma)|$. Therefore for each $\gamma \in E(\Delta)$, the inequalities $\tfrac1n \leq |\Re(z_\gamma)| + |\Im(z_\gamma)| \leq n$ define the convex subset of $\overline{\mathcal P(\Delta, v)^\circ}$ bounded between two real hyperplanes. Finally we show that $\mathcal C_n(\Delta, v) \subset \mathcal P(\Delta, v)$. Since $\mathcal P(\Delta, v) = \overline{\mathcal P(\Delta, v)^\circ} \setminus \mathcal Z_0(\Delta)$, it suffices to observe that $\mathcal C_n(\Delta, v) \cap \mathcal Z_0(\Delta) = \emptyset$, which holds because $\tfrac1n \leq |\Re(z_\gamma)| + |\Im(z_\gamma)|$ implies that $z_\gamma$ is bounded away from $0$ for every $\gamma \in E(\Delta)$.

Since $\mathcal C_n(\Delta, v)$ is a compact set defined by finitely many nonstrict linear inequalities, it is a compact convex polytope, and hence triangulable by finitely many simplices, e.g. via barycentric subdivision. Let us choose these finite triangulations of each $\mathcal C_n(\Delta, v)$ such that if $m < n$, then $\mathcal C_m(\Delta, v) \subset \mathcal C_n(\Delta, v)$ is a subcomplex. Furthermore, let us choose our triangulations such that $\mathcal C_m(\Delta, v) \cap \mathcal C_n(\Delta', v')$ is a subcomplex of both $\mathcal C_m(\Delta, v)$ and $\mathcal C_n(\Delta', v')$ for every $m,n \in \N$. Finally, when $\mathcal D(\Delta', v') = g\mathcal D(\Delta, v)$ for $g \in \mathrm{MCG}(S_g,\Sigma)$, note that the sets $\mathcal C_n(\Delta, v)$ and $\mathcal C_n(\Delta', v')$ are linearly isomorphic. Let us choose our triangulations to be compatible with these linear isomorphisms, i.e. the induced map $g:\mathcal C_n(\Delta, v) \to \mathcal C_n(\Delta', v')$ is also a simplicial isomorphism. 

We now have endowed $\mathcal P(\Delta, v) = \bigcup_{n > 1} \mathcal C_n(\Delta, v)$, and hence also $\mathcal D(\Delta, v) = s_{\Delta, v}(\mathcal P(\Delta, v))$, with the structure of an infinite simplicial complex. By our choice of triangulations, we thus endow $\mathcal H_{\mathrm{marked}}(\kappa) = \bigcup_{(\Delta, v) \in \mathscr I(\kappa)} \mathcal D(\Delta, v)$ with the structure of an infinite $\mathrm{MCG}(S_g, \Sigma)$-invariant simplicial complex such that every $\mathcal D_\sigma$ is a subcomplex.

Now let us take the barycentric subdivision $\mathcal H_{\mathrm{marked}}(\kappa)'$. For each $\sigma \subset \mathscr I(\kappa)$, define $\mathcal A_\sigma$ to be the union of all simplices in $\mathcal H_{\mathrm{marked}}(\kappa)'$ that are disjoint from $\mathcal D_\sigma$, and let $\mathcal U_\sigma = \mathcal H_{\text{marked}}(\kappa)' \setminus \mathcal A_\sigma$. Since our simplicial structure on $\mathcal H_{\mathrm{marked}}(\kappa)'$ is $\mathrm{MCG}(S_g,\Sigma)$-invariant, it follows that $\mathscr U(\kappa) \coloneqq \{\mathcal U(\Delta, v)\}_{(\Delta, v) \in \mathscr I(\kappa)}$ is an $\mathrm{MCG}(S_g, \Sigma)$-invariant open covering of $\mathcal H_{\text{marked}}(\kappa)$. We will show that $\mathscr D(\kappa)$ is an equivalent refinement of $\mathscr U(\kappa)$.

It is straightforward to see that $\mathcal U_\sigma$ is an open neighborhood of $\mathcal D_\sigma$. Since we have taken the barycentric subdivision, it is also straightforward to see that $\mathcal U_\sigma = \bigcap_{(\Delta,v) \in \sigma} \mathcal U(\Delta, v)$. To see that $\mathscr D(\kappa)$ is an equivalent refinement of $\mathscr U(\kappa)$, it remains only to see that each $\mathcal D_\sigma$ is an $\mathrm{MCG}(S_g,\Sigma)_\sigma$-deformation retract of $\mathcal U_\sigma$. This follows from Proposition 2.1 of \cite{gonzalez-gonzalez}.

To realize $\mathscr D_{\mathrm{finite}}(\kappa)$ as an equivalent refinement of some $\mathrm{MCG}(S_g, \Sigma)$-invariant open covering $\mathscr V(\kappa)$ of $\mathcal H_{\mathrm{marked}}(\kappa) \setminus \mathcal T(\kappa)$, we must make the following modifications to the above argument.

For each $\sigma \subset \mathscr I(\kappa)$, define $\mathcal B_\sigma$ to be the union of all simplices in $\mathcal H_{\mathrm{marked}}(\kappa)'$ whose intersection with $\mathcal D_\sigma$ is empty or contained in $\mathcal T(\kappa)$, and let $\mathcal V_\sigma = \mathcal H_{\mathrm{marked}}(\kappa)' \setminus (\mathcal T(\kappa) \cup \mathcal B_\sigma)$. Again, it follows from $\mathrm{MCG}(S_g,\Sigma)$-invariance of our simplicial structure on $\mathcal H_{\mathrm{marked}}(\kappa)'$ that $\mathscr V(\kappa) \coloneqq \{\mathcal V(\Delta, v)\}_{(\Delta, v) \in \mathscr I(\kappa)}$ is an $\mathrm{MCG}(S_g,\Sigma)$-invariant open covering of $\mathcal H_{\mathrm{marked}}(\kappa) \setminus \mathcal T(\kappa)$. It is still straightforward to see that $\mathcal V_\sigma$ is an open neighborhood of $\mathcal F_\sigma = \mathcal D_\sigma \setminus \mathcal T(\kappa)$ in $\mathcal H_{\text{marked}}(\kappa) \setminus \mathcal T(\kappa)$, and that $\mathcal V_\sigma = \bigcap_{(\Delta, v)} \mathcal V(\Delta, v)$ is a consequence of our having taken the barycentric subdivision. Finally, we show that the deformation retraction from $\mathcal V_\sigma$ to $\mathcal F_\sigma$ is given by the same formula as in Section 2 of \cite{gonzalez-gonzalez} as follows.

Every point $x \in \mathcal V_\sigma \subset \mathcal H_{\mathrm{marked}}(\kappa)'$ has a barycentric coordinate representation
\[
x = \sum_{i=1}^r t_id_i + \sum_{j=1}^\rho \tau_jb_j, \quad \quad t_i,\tau_j\geq0 \,\, \forall i,j
\]
where $d_1, \dots, d_r$ are vertices of $\mathcal H_{\mathrm{marked}}(\kappa)$ belonging to $\mathcal D_\sigma$, and $b_1, \dots, b_\rho$ are vertices belonging to $\mathcal B_\sigma$, and $r \geq 1$ and $\rho \geq 0$. We define $\lambda(x) = \sum_{i=1}^r t_i \in [0,1]$ and $f(x) = (\sum_{i=1}^r t_id_i)/\lambda(x)$. Then we have a homotopy
\[
H: \mathcal V_\sigma \times [0,1] \to \mathcal V_\sigma
\]
given by $H(x,s) = (1-s)x + sf(x)$. It is easy to see that $H$ is an $\mathrm{MCG}(S_g,\Sigma)_\sigma$-deformation retraction from $\mathcal V_\sigma$ to $\mathcal F_\sigma$. We conclude that $\mathscr D_{\mathrm{finite}}(\kappa)$ is an equivalent refinement of $\mathscr V(\kappa)$.
\end{proof}

The reader may refer to Sections 2.1 and 4.G of \cite{hatcher} for further discussion of the following definitions in the non-equivariant setting, and to Section 5 of \cite{gonzalez-gonzalez} in the equivariant setting.

\begin{defn}
We say that a simplicial complex $B$ is a $\Delta$\emph{-complex} if every simplex of $B$ is endowed with a total ordering on its vertices. We denote by $[v_0, \dots, v_n]$ the ordered $n$-simplex of $B$ with vertices $v_0 < \cdots < v_n$. In particular, $[v,w]$ denotes the edge $v \to w$. When a group $G$ acts on $B$ in a way compatible with the $\Delta$-complex structure, we denote by $G_v$ the stabilizer of the vertex $v$.
\end{defn}

\begin{defn}
Let $B$ be a $\Delta$-complex. A \emph{complex of spaces} $\mathfrak C$ over $B$ consists of a topological space $\mathfrak C(v)$ for each vertex of $B$ and a map $\mathfrak C[v,w]:\mathfrak C(v) \to \mathfrak C(w)$ for each edge $[v,w]$ of $B$, such that for every $n$-simplex $[v_0,\dots,v_n]$ of $B$, the diagram formed by the maps $\mathfrak C[v_i, v_j]$, where $i<j$, commutes. If a group $G$ acts on $B$, then $\mathfrak C$ is a $G$\emph{-complex of spaces} if there are homeomorphisms $\mathfrak C(v) \to \mathfrak C(gv)$ that are compatible with the group action.

Given complexes of spaces $\mathfrak C$ and $\mathfrak D$ over $B$, a \emph{map} $\mathfrak F: \mathfrak C \to \mathfrak D$ is a collection of maps $\mathfrak F(v): \mathfrak C(v) \to \mathfrak D(v)$ for each vertex $v$ of $B$ such that for each edge $[v,w]$ of $B$, we have $\mathfrak D[v,w] \circ \mathfrak F(v) = \mathfrak F(w) \circ \mathfrak C[v,w]$. If $\mathfrak C$ and $\mathfrak D$ are $G$-complexes, then we require these maps to be equivariant with respect to the action in the obvious way.
\end{defn}

\begin{defn}
We define the \emph{colimit} and \emph{homotopy colimit} of a diagram of spaces $\mathfrak C$ over a $\Delta$-complex $B$. We define $\colim \mathfrak C \coloneqq \left(\coprod_v \mathfrak C(v)\right)/\sim$, where the disjoint union is taken over vertices $v$ of $B$, and $x \sim \mathfrak C[v,w](x)$ for every edge $[v,w]$ of $B$.

We define
\[
\hocolim \mathfrak C \coloneqq \left(\coprod_{[v_0,\dots,v_n]} [v_0,\dots,v_n] \times \mathfrak C(v_0)\right)/\sim,
\]
where the disjoint union is taken over all simplices of $B$.  The equivalence relation is given by identifying along inclusions $[v_0,\dots,\widehat{v_i},\dots,v_n] \times \mathfrak C(v_0) \subset [v_0,\dots,v_n] \times \mathfrak C(v_0)$ for $i > 0$, and identifying $(\alpha, x) \sim (\alpha, \mathfrak C[v_0, v_1](x))$ for $\alpha \in [v_1,\dots,v_n] \subset [v_0,\dots,v_n]$ and $x \in \mathfrak C(v_0)$. We write
\[
[v_0,\dots,v_n] = \left\{\sum_i t_i \cdot u_i \in \R^n \mst t_i \geq 0, \sum_i t_i = 1\right\},
\]
where $\{u_i\}_{i=1}^n$ is a basis for $\R^n$, so that points of $\hocolim \mathfrak C$ are of the form $[(\sum_i t_i \cdot u_i, x)]$. There is a \emph{base projection} map $p_b: \hocolim \mathfrak C \to B$ given by $p_b[(\sum_i t_i \cdot u_i, x)] = \sum_i t_i \cdot u_i$ and a \emph{fiber projection} map $p_f:\hocolim \mathfrak C \to \colim \mathfrak C$ given by $p_f[(\sum_i t_i \cdot u_i, x)] = x$.

Given a map $\mathfrak F: \mathfrak C \to \mathfrak D$ of complexes of spaces over $B$, we clearly have induced maps $\colim \mathfrak F: \colim \mathfrak C \to \colim \mathfrak D$ and $\hocolim \mathfrak F: \hocolim \mathfrak C \to \hocolim \mathfrak D$.
\end{defn}

\begin{defn}[Complex for a covering]
Let $\mathscr A = \{A_i\}_{i \in I}$ be a covering of a space $X$, and let $\mathcal N(\mathscr A)'$ denote the barycentric subdivision of the nerve $\mathcal N(\mathscr A)$. This is a simplicial complex with a vertex $v_\sigma$ for each $\sigma \subset I$ such that $A_\sigma \ne \emptyset$, and has the structure of a $\Delta$-complex given by ordering the vertices $v_\sigma < v_\tau$ whenever $\sigma \supset \tau$. We define a complex of spaces $\mathfrak C_{\mathscr A}$ over $\mathcal N(\mathscr A)'$ by setting $\mathfrak C_{\mathscr A}(v_\sigma) \coloneqq A_\sigma$, and letting $\mathfrak C_{\mathscr A}[v_\sigma,v_\tau]$ be the inclusion $A_\sigma \hookrightarrow A_\tau$.
\end{defn}

The following lemma is a generalization of Proposition 5.3 of \cite{gonzalez-gonzalez}, which is itself a generalization of Proposition 4G.2 of \cite{hatcher}. We use the refinement $\mathscr A$ of $\mathscr U$ in order to construct a $G$-invariant partition of unity subordinate to $\mathscr U$.

\begin{lem}\label{partition of unity}
Let $G$ be a discrete group acting properly discontinuously on a paracompact Hausdorff space $X$. Let $\mathscr U = \{U_i\}_{i \in I}$ be a locally finite $G$-invariant open covering with an equivalent $G$-invariant closed refinement $\mathscr A = \{A_i\}_{i \in I}$. If each $G_i$ is finite, then $p_f: \hocolim \mathfrak C_{\mathscr U} \to \colim \mathfrak C_{\mathscr U}=X$ is a $G$-homotopy equivalence.
\end{lem}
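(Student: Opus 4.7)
The plan is to adapt the strategy of Hatcher's non-equivariant Nerve Lemma (Proposition 4G.2 of \cite{hatcher}) by constructing an explicit $G$-equivariant section $s: X \to \hocolim \mathfrak C_{\mathscr U}$ of $p_f$ and then showing that $s \circ p_f$ is $G$-homotopic to the identity through a straight-line interpolation inside each simplex of $\mathcal N(\mathscr U)'$. The entire argument rests on a single technical input: a $G$-invariant partition of unity $\{\phi_i\}_{i \in I}$ subordinate to $\mathscr U$, in the sense that $\supp \phi_i \subset U_i$ and $\phi_{gi}(gx) = \phi_i(x)$ for all $g \in G$. Producing this partition of unity is the step I expect to be the main obstacle, since paracompactness alone yields a subordinate partition but not an equivariant one; this is precisely where the closed refinement $\mathscr A$ enters.

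First I would construct the partition of unity. Pick a set $J \subset I$ of orbit representatives for the $G$-action on $I$. For each $j \in J$, use paracompactness and normality of $X$ to produce a continuous function $b_j : X \to [0,1]$ equal to $1$ on the closed set $A_j$ with support contained in the open set $U_j$. Finiteness of $G_j$, together with the $G_j$-invariance of $U_j$ and $A_j$, allows me to average over $G_j$ to obtain a $G_j$-invariant $\tilde b_j$ with the same support and boundary-value properties. For $i = gj$ I then define $b_i(x) \coloneqq \tilde b_j(g^{-1}x)$; this is well-defined independently of the coset representative by $G_j$-invariance of $\tilde b_j$, and satisfies $b_{hi}(hx) = b_i(x)$ by construction. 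Local finiteness of $\mathscr U$ (inherited by $\mathscr A$) and the fact that $\mathscr A$ covers $X$ ensure that $\sum_{i \in I} b_i$ is a locally finite sum bounded below by $1$, so $\phi_i \coloneqq b_i / \sum_{i' \in I} b_{i'}$ is the desired $G$-invariant partition of unity subordinate to $\mathscr U$.

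Next I would mimic Hatcher's section. For $x \in X$, the set $\sigma(x) \coloneqq \{i \in I : \phi_i(x) > 0\}$ is finite, and $x \in U_{\sigma(x)} \neq \emptyset$, so the chain of subsets of $\sigma(x)$ obtained by thresholding the nonzero values of $\phi_i(x)$ corresponds to a simplex of $\mathcal N(\mathscr U)'$. The values $(\phi_i(x))_i$ supply barycentric coordinates on this simplex, and pairing with $x \in U_{\sigma(x)} = \mathfrak C_{\mathscr U}(v_{\sigma(x)})$ specifies a point $s(x) \in \hocolim \mathfrak C_{\mathscr U}$. Continuity is standard, $G$-equivariance of $s$ is immediate from $G$-invariance of $\{\phi_i\}$, and $p_f \circ s = \mathrm{id}_X$ holds by the fiber projection formula.

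Finally, to exhibit a $G$-homotopy $s \circ p_f \simeq_G \mathrm{id}_{\hocolim \mathfrak C_{\mathscr U}}$, I would use the straight-line interpolation inside each simplex of $\mathcal N(\mathscr U)'$ between a given point's barycentric coordinates and those prescribed by the $\phi_i$, glued across faces via the inclusion maps $\mathfrak C_{\mathscr U}[v_\sigma, v_\tau]$. Compatibility with the $\Delta$-complex gluings is a routine combinatorial check following Hatcher verbatim, and $G$-equivariance of the homotopy is a direct consequence of $G$-invariance of $\{\phi_i\}$. Once the partition of unity is in hand, the remainder is bookkeeping.
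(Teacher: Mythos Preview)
Your proposal is correct and follows essentially the same approach as the paper: both construct a $G$-invariant partition of unity subordinate to $\mathscr U$ by choosing orbit representatives, applying Urysohn with the closed sets $A_j$, averaging over the finite stabilizers $G_j$, and propagating along orbits; both then use this partition to write down an explicit equivariant section of $p_f$ and a straight-line homotopy in the simplex coordinates. The only cosmetic differences are that the paper first shrinks $U_j$ to an intermediate $U_j'$ before applying Urysohn and sums rather than averages over $G_j$, neither of which affects the argument.
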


\begin{proof}
We first construct a $G$-invariant partition of unity subordinate to $\mathscr U$. We take a transversal of the $G$-action: let $J \subset I$ be such that for each $G$-orbit $GA_i$ of the action $G \times \mathscr A \to \mathscr A$, there is a unique $j \in J$ with $A_j \subset GA_i$. For each $j \in J$, let us take an open set $U_j'$ satisfying
\[
A_j \subset U_j' \subset \overline{U_j'} \subset U_j.
\]

Since the space $X$ is normal, we may apply Urysohn's Lemma to obtain for each $j \in J$ a function $\eta_j:X \to [0,1]$ satisfying $\eta_j(X \setminus U_j') = 0$ and $\eta_j(A_j) = 1$. Since $G_j$ is finite, we define the finite sum
\[
\psi_j \coloneqq \sum_{g \in G_j} \eta_j \circ g.
\]
The function $\psi_j$ satisfies $\psi_j \circ g = \psi_j$ for every $g \in G_j$, and its support is contained in $\bigcup_{g \in G_j} g(\overline{U_j'}) \subset U_j$. Now, for each $i \in I$ with $g(A_i) = A_j$, we set $\psi_i \coloneqq \psi_j \circ g$. Thus for every $i \in I$, we have $A_i \subset \supp \psi_i \subset U_i$. Since $\mathscr U$ is locally finite, the sum $\sum_{i \in I} \psi_i$ is finite, and since $\mathscr A$ covers $X$, this sum is everywhere nonzero. By construction, this sum is $G$-invariant.

Setting $\phi_i = \psi_i/\sum_{i \in I} \psi_i$, we conclude that $\Phi \coloneqq \{\phi_i\}_{i \in I}$ is a $G$-invariant partition of unity subordinate to $\mathscr U$. The remainder of the proof of this lemma is now identical to the proof of Proposition 5.3 of \cite{gonzalez-gonzalez}. We therefore omit some routine verifications.

The formula
\[
\ell_\Phi(x) \coloneqq \left[\left(\sum_i \phi_i(x) \cdot u_i, x \right)\right]
\]
gives a $G$-equivariant map $X \to \hocolim \mathfrak C_{\mathscr U}$ that clearly satisfies $p_f \circ \ell_\Phi = \mathrm{Id}_X$. It remains to show that $\ell_\Phi \circ p_f$ is $G$-homotopic to $\mathrm{Id}_{\hocolim \mathfrak C_{\mathscr U}}$. It is routine to verify that the linear deformation
\[
H\left(\left[\left(\sum_i t_i \cdot u_i, x\right)\right], t\right) = \left[\left(\sum_i (tt_i + (1-t)\phi_i(x)) \cdot u_i, x\right)\right]
\]
is the desired homotopy.
\end{proof}

\begin{lem}[Proposition 5.5 of \cite{gonzalez-gonzalez}]\label{equivariant homotopy lemma}
Given a map $\mathfrak F: \mathfrak C \to \mathfrak D$ of $G$-complexes of spaces over a $\Delta$-complex $B$, if $\mathfrak F(v)$ is a $G_v$-homotopy equivalence for every vertex $v$ of $B$, then $\hocolim \mathfrak F: \hocolim \mathfrak C \to \hocolim \mathfrak D$ is a $G$-homotopy equivalence.
\end{lem}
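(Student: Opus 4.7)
The plan is to adapt the proof of Proposition 5.5 of \cite{gonzalez-gonzalez} via induction on the skeletal filtration of the $\Delta$-complex $B$. Let $B^{(n)}$ denote the $n$-skeleton of $B$, and let $\hocolim_n \mathfrak{C}$ denote the homotopy colimit restricted to $B^{(n)}$. The induced maps $F_n: \hocolim_n \mathfrak{C} \to \hocolim_n \mathfrak{D}$ assemble into $\hocolim \mathfrak{F}$ via the natural inclusions $\hocolim_n \mathfrak{C} \hookrightarrow \hocolim_{n+1} \mathfrak{C}$, which are $G$-cofibrations.

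For the base case $n = 0$, the space $\hocolim_0 \mathfrak{C} = \coprod_v \mathfrak{C}(v)$ decomposes $G$-equivariantly as $\coprod_{[v]} G \times_{G_v} \mathfrak{C}(v)$, indexed by $G$-orbits of vertices. Since each $\mathfrak{F}(v)$ is a $G_v$-homotopy equivalence, the induced map on each piece is a $G$-homotopy equivalence, and hence so is $F_0$.

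For the inductive step, I would realize $\hocolim_n \mathfrak{C}$ as the $G$-equivariant pushout of $\hocolim_{n-1} \mathfrak{C}$ and $\coprod_\sigma \sigma \times \mathfrak{C}(v_0^\sigma)$ along $\coprod_\sigma \partial\sigma \times \mathfrak{C}(v_0^\sigma)$, where $\sigma$ ranges over $n$-simplices of $B$ and the attaching identifies faces of $\sigma$ with pieces of the $(n-1)$-skeleton using both the face inclusions and the maps $\mathfrak{C}[v_0^\sigma, v_1^\sigma]$. The analogous pushout square for $\mathfrak{D}$ receives a map from the one for $\mathfrak{C}$ induced by $\mathfrak{F}$. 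Since the attaching inclusion is a $G$-cofibration, the equivariant gluing lemma for pushouts reduces $F_n$ being a $G$-homotopy equivalence to: (i) $F_{n-1}$ is a $G$-homotopy equivalence, which is the inductive hypothesis, and (ii) the induced maps on the top-left and bottom-left corners of the pushouts are $G$-homotopy equivalences. Organizing these coproducts by $G$-orbits of $n$-simplices, each orbit contributes an induced piece of the form $G \times_{G_\sigma}(X \times \mathfrak{C}(v_0^\sigma))$ for $X = \sigma$ or $\partial\sigma$, on which the map reduces to $\mathrm{id}_X \times \mathfrak{F}(v_0^\sigma)$. Since $G_\sigma \subseteq G_{v_0^\sigma}$ and a $G_{v_0^\sigma}$-homotopy equivalence restricts to a $G_\sigma$-homotopy equivalence, this handles (ii).

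The main obstacle will be the precise formulation and verification of the equivariant gluing lemma for $G$-pushouts along $G$-cofibrations; this is a standard but nontrivial statement in equivariant homotopy theory, and its proof goes through equivariant mapping cylinders built by iterating the construction in Lemma \ref{partition of unity}. Once established at each finite stage, the passage from the $F_n$ to $\hocolim \mathfrak{F} = \colim_n F_n$ follows from the standard fact that a sequential colimit of $G$-homotopy equivalences along $G$-cofibrations remains a $G$-homotopy equivalence.
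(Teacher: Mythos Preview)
The paper does not give its own proof of this lemma; it simply cites Proposition~5.5 of \cite{gonzalez-gonzalez} and adds the one-line remark that ``it is straightforward to check that Lemma~\ref{equivariant homotopy lemma} holds just as well for discrete groups with all $G_v$ finite.'' Your skeletal-induction argument is the standard route to such a statement and is presumably what the cited reference does, so there is no substantive divergence to compare.

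Two small comments on your sketch. First, your aside that the equivariant gluing lemma ``goes through equivariant mapping cylinders built by iterating the construction in Lemma~\ref{partition of unity}'' is a misfire: Lemma~\ref{partition of unity} is a partition-of-unity argument for the fiber projection $p_f$ and has nothing to do with gluing along cofibrations. The gluing lemma is a standard fact about pushouts along $G$-cofibrations, proved via equivariant mapping cylinders in the usual way; cite a standard reference (e.g.\ tom~Dieck) rather than Lemma~\ref{partition of unity}. Second, the inclusion $G_\sigma \subseteq G_{v_0^\sigma}$ relies on the $G$-action being compatible with the $\Delta$-complex ordering (so that an element fixing an ordered simplex fixes each vertex); this is implicit in the paper's setup but is worth making explicit, since it is exactly the hypothesis that makes the orbit decomposition $\coprod_{[\sigma]} G \times_{G_\sigma}(\sigma \times \mathfrak C(v_0^\sigma))$ behave well.
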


While \cite{gonzalez-gonzalez} is concerned primarily with the case where $G$ is finite, it is straightforward to check that Lemma \ref{equivariant homotopy lemma} holds just as well for discrete groups with all $G_v$ finite.

\begin{proof}[Proof of Theorem \ref{equivariant nerve lemma}]
Let $*_{\mathcal N(\mathscr A)'}$ be the complex of spaces over $\mathcal N(\mathscr A)'$ where $*_{\mathcal N(\mathscr A)'}(v_\sigma)$ is a single point $*$ for every $\sigma \subset I$. Observe that $\hocolim *_{\mathcal N(\mathscr A)'} = \mathcal N(\mathscr A)'$. For each $\sigma \subset I$, we have a $G_\sigma$-deformation retraction $\mathfrak F(v_\sigma):A_\sigma \to *$, and hence it follows from Lemma \ref{equivariant homotopy lemma} that $\hocolim \mathfrak C_{\mathscr A} \simeq_G \mathcal N(\mathscr A)'$. Since $\mathscr A$ is an equivalent refinement of $\mathscr U$, we also have $G_\sigma$-deformation retractions $U_\sigma \to A_\sigma$, and so Lemma \ref{equivariant homotopy lemma} again gives $\hocolim \mathfrak C_{\mathscr U} \simeq_G \hocolim \mathfrak C_{\mathscr A}$. Finally, Lemma \ref{partition of unity} gives $\hocolim \mathfrak C_{\mathscr U} \simeq_G X$. Altogether, we have $X \simeq_G \mathcal N(\mathscr A)'$. Since $\mathcal N(\mathscr A)$ and $\mathcal N(\mathscr A)'$ are equal as topological spaces, we are done.
\end{proof}

\bibliography{}{}
\bibliographystyle{amsalpha}

\end{document}